\def\l@section{\@tocline{1}{0pt}{0pc}{}{}}
\def\l@subsection{\@tocline{2}{0pt}{1.5pc}{}{}}
\newtheorem{theorem}{Theorem}[section]
\newtheorem{lemma}[theorem]{Lemma}
\newtheorem{proposition}[theorem]{Proposition}
\newtheorem{corollary}[theorem]{Corollary}
\newtheorem{definition}[theorem]{Definition}
\newtheorem{notation}[theorem]{Notation}
\newtheorem{example}[theorem]{Example}
\newtheorem{construction}[theorem]{Construction}
\theoremstyle{remark}
\newtheorem{remark}[theorem]{Remark}
\numberwithin{equation}{section}
\newcommand{\RR}{\mathbb{R}}
\newcommand{\Rpos}{\mathbb{R}_+}
\newcommand{\Rnz}{\mathbb{R}^*}
\newcommand{\Zpos}{\mathbb{Z}_+}
\newcommand{\defset}[2]{ \left\{\ #1 \ \left\lvert\ #2 \right.\ \right\} }
\newcommand{\absbig}[1]{ \left| #1 \right| }
\newcommand{\normbig}[1]{ \left \Vert #1 \right \Vert }
\newcommand{\ol}{\overline}
\newcommand{\ortprehomeo}{orientation-preserving homeomorphism}
\DeclareMathOperator{\wt}{W}
\DeclareMathOperator{\Tree}{Tree}
\DeclareMathOperator{\Fore}{Fore}
\DeclareMathOperator{\Lab}{\mathfrak{L}}
\DeclareMathOperator{\comb}{Comb}
\newcommand{\pp}{\Xi}
\newcommand{\ppf}{\Xi_F}
\newcommand{\Ptt}{\mathfrak{P}}
\newcommand{\ptt}[1]{\mathfrak{#1}}
\newcommand{\perm}{\underline{P}}
\newcommand{\ee}{\mathrm{e}}
\newcommand{\one}{\mathbf{1}}
\newcommand{\PP}{\mathbf{P}}
\newcommand{\TT}{\mathbf{T}}
\newcommand{\FF}{\mathbf{F}}
\newcommand{\PT}{\mathbf{P} T}
\newcommand{\FM}{\mathbf{F} M}
\newcommand{\TM}{\mathbf{T} M}
\newcommand{\TR}{\mathbf{T} R}
\tikzset{
    blk/.style = {draw, circle, fill=black, text=white},
    wht/.style = {draw, circle, fill=white, text=black},
    mid_auto/.style = {midway, auto}
}
\begin{document}

\title[Enumeration of weighted plane trees]{Enumeration of weighted plane trees\\ by a permutation model} 

\author[S.C. Lu, S. Yi]{Sicheng Lu and Yi Song}

\subjclass[2020]{Primary 05C30 05A19; Secondary 05C10 57M15.} 

\keywords{plane tree, enumeration, permutation, bijection}

\date{%
}

\begin{abstract} 
This manuscript addresses an enumeration problem on weighted bi-colored plane trees with prescribed vertex data, with all vertices labeled distinctly. 
We give a bijection proof of the enumeration formula originally due to Kochetkov \cite{YYK15}, hence affirmatively answer a question of Adrianov-Pakovich-Zvonkin in \cite{APZ2020}. The argument is purely combinatorial and totally constructive, remaining valid for real-valued edge weights. 
A central process is a geometric construction that directly encodes each tree as a permutation. 
We also exhibit algebraic relationships between the enumeration problem, the partial order on partitions of vertices and the Stirling numbers of the second kind. 
Some computation examples are presented as appendices.
\end{abstract} 

\maketitle

{\footnotesize
\tableofcontents
}

\section{Introduction}\label{sec:intro}

As combinatorial objects, embedded graphs on surfaces are closely related to a variety of topics, such as branched coverings between Riemann surfaces, representation theory of symmetry groups, moduli space of algebraic curves, matrix integrals and many others \cite{Petr20, KonZor03, Kont92}. We refer to \cite{LsZak04} for a detailed exposure, also as a comprehensive literature for basic concepts in this manuscript. 
Among these, plane trees are the simplest and have already generated a substantial body of work. 
This manuscript focuses on a class of weighted bi-colored plane trees and their enumeration.

\medskip
\subsection{Labeled weighted bi-colored plane tree}\label{ssec:LWBP_tree}
We begin by introducing the required definitions and reviewing previous results, then proceed to the enumeration formula. 

A weighted bi-colored plane tree (\textbf{WBP-tree} in short) consists of 
    \begin{itemize}
        \item an embedded tree $(V,E)$ in $\RR^2$, with vertex set $V$ and edge set $E$; 
        \item a partition of $V$ into \textbf{black vertices} $V^+$ and \textbf{white vertices} $V^-$, such that each edge connects a black vertex to a white vertex;  
        \item a \textbf{weight function} $\wt_E : E \to \Rpos$ on edges.
    \end{itemize}

The weight function $W_E$ naturally extends to a function on $V$ by defining
\[ \wt_V(v^\pm):=\pm \sum_{e \in E(v^\pm)} \wt_E(e),\quad \forall v^\pm \in V^\pm \ . \]

Here $E(v)$ is the set of edges adjacent to $v$, and the sign is used to distinguish black and white vertices. 
The distribution of weights over vertices is recorded as a data called passport. 
Here we are using a rather general definition, similar to the settings in \cite{GorTor1980}. 
\begin{definition}\label{def:passport}
    A \textbf{passport} $\pp = (S,\lambda,\wt)$ consists of 
    \begin{itemize}
        \item a nonempty finite set $S$ called the \textbf{index set}; 
        \item a \textbf{multiplicity function} $\lambda: S \to \Zpos$; and  
        \item a \textbf{weight function} $\wt : S\to \Rnz$ such that
        \begin{equation}\label{eq:passport_weight}
            \sum_{s\in S} \lambda(s) \cdot \wt(s) =0 \ .
        \end{equation}
    \end{itemize}
    Also define
    \begin{equation*}
        p:= \sum_{\wt(s)>0} \lambda(s), \quad q:= \sum_{\wt(s)<0} \lambda(s),\ \quad |\pp|:=p+q, \quad \Vert \pp \Vert:= \frac12 \sum_{s\in S} \lambda(s)\cdot\absbig{\wt(s)}
    \end{equation*}
    to be the number of black vertices, white vertices, total vertices and total weights respectively. 
\end{definition}

\begin{definition} \label{def:labeled_tree}
    A \textbf{labeled weighted bi-colored plane tree} (\textbf{LWBP-tree} in short) of passport $\pp = (S,\lambda,\wt)$ is a WBP-tree $(V,E,\wt_E)$ with a surjection $\Lab: V \to S$ called \textbf{labeling}, such that
    \begin{enumerate}
        \item as weight function on $V$, $\wt \circ \Lab = \wt_V$; 
        \item as multiplicity function on $S$, $|\Lab^{-1} (s)|= \lambda(s)$ for all $s\in S$. 
    \end{enumerate} 
    In this manuscript, we use $T = (V, E, \wt_E, \Lab)$ to denote an LWBP-tree, and all trees are LWBP-trees. 
\end{definition}
The labeling distinguishes vertices of the same weight. Such formalization is useful in the later enumeration problems. See Example \ref{eg:tree_label} for an illustration, after our notations explained. 

\medskip
As finite combinatorial objects, two related questions arise naturally: 
\begin{center}
    \emph{Is there a labeled weighted bi-colored plane tree of a prescribed passport? \\
    If so, can we enumerate all such trees?} 
\end{center}
Here trees are counted up to isomorphisms. Two LWBP-trees $T_1, T_2$ of passport $\pp$ are said to be \textbf{isomorphic} if there exists an \ortprehomeo\ $I:\RR^2 \to \RR^2$ such that $I(V^{\pm}_1) = V^{\pm}_2$ , $I(E_1) = E_2$ as elements of graphs, $\wt_{E_2} \circ\ I = \wt_{E_1}$ as weight functions, and $\Lab_2 \circ\ I = \Lab_1$ as labeling. 
Denote $\Tree(\pp)$ as the set of all non-isomorphic LWBP-trees of passport $\pp$. 

\medskip
Usually we consider passports with injective weight function $\wt$. Such passports are called \textbf{trivial}, and the multiplicity of an index $s\in S$ just counts the number of vertices of weight $\wt(s)\in \RR^*$. Then a trivial passport is simply two partitions of the total weight $\Vert \pp \Vert$. Boccara \cite{Boc82} and Zannier \cite{Zan95} independently indicate the necessary and sufficient condition for the existence, when all the weights are integers. 
Following notations in Definition \ref{def:passport}, let 
\[ \gcd(\wt) := \gcd\{ \absbig{\wt(s)}, s \in S \} \ . \]
Then there exists a tree of passport $\pp$ if and only if 
\begin{equation}\label{eq:exist_WBPTree}
    (p + q - 1) \cdot \gcd(\wt) \leq \Vert \pp \Vert \ .
\end{equation}
It is not difficult to see that the same criterion holds for all passports with integer weights. See Remark \ref{rmk:real_weighted} for real-valued cases. 

The enumeration problem for trivial passport has been considered in various forms and we list some of them. Tutte \cite{Tutte1970} studies a slightly different case, in which the weight on an edge is allowed to be zero, using generating functions. 
Zvonkin \cite{Zvo13} considers enumeration problem with a given number of black and white vertices, together with the total integer weight over all edges. 
Pakovich-Zvonkin \cite{PakZvo14} classifies all passports with integer weights such that $\absbig{\Tree(\pp)} = 1$.

By comparison, results for non-weighted bi-colored plane trees are more plentiful and specific. This is equivalent to require the weight function on edges to be constantly 1.
The enumeration formula according to passport is already given by Harary-Tutte \cite{HarTut64}. 
Goulden-Jackson \cite{GlJac92} relates enumeration of such trees and similar objects with appropriate characteristics to connection coefficients for the symmetric group. 
Goupil-Schaeffer \cite{GpSch98} and Poulalhon-Schaeffer \cite{PouSch02} generalize the above results to surfaces of arbitrary genus. 
Chapuy-F\'eray-Fusy \cite{CFF13} later provides a combinatorial proof of Goupil-Schaeffer's result.

\bigskip
\subsection{Kochetkov's enumeration formula} 
There is another choice of passport. We call a passport \textbf{full}, denoted by $\ppf$, if $\lambda(s) \equiv 1$ for all $s\in S$ (also called \emph{simple} in \cite{YYK15} and \emph{totally labelled} in \cite{APZ2020}). The multiplicity function of a full passport will be denoted as $\one$. Hence for a LWBP-tree of a full passport, its labeling $\Lab:V\to S$ is a bijection that makes all vertices distinct. 

Kochetkov \cite{YYK15} presents an enumeration formula for trees of full passport, which will be stated below. This manuscript aims to present a bijection proof of Kochetkov's formula. Thus, our approach affirmatively answers the last part of problem ``\textsc{Enumeration according to a passport}'' in \cite[Chapter 12]{APZ2020} by Adrianov-Pakovich-Zvonkin. 
Besides, it is combinatorial and constructive, naturally feasible for real-valued weights, hence providing a theoretical algorithm for finding all weighted bi-colored plane trees of a prescribed full passport. 

More notations are needed for the statement of formula.

\begin{definition}\label{def:partition}
Let $\ppf = (S, \one, \wt)$ be a full passport.
\begin{enumerate}[label=\arabic*.]
    \item 
     For a non-empty subset $S' \subset S$, denote $\pp(S'):=(S', \one, \wt')$ as the full passport with $\wt'$ being the restriction of $\wt$ on $S'$. $\pp(S')$ is also called a \textbf{subpassport} of $\ppf$.  

    \item 
    An \textbf{n-partition} of $\ppf$ is a set of subpassports $\ptt{p} := \{ \pp(S_i) \}_{i=1}^{n}$ such that $\bigsqcup_{i=1}^{n} {S_i} = S$ and each $S_i$ is non-empty.
    \begin{enumerate}[label=\roman*.]
        \item $|\ptt{p}|:=n$ is called the \textbf{length} of the partition. 
        \item Define the quantity
        \[ X(\ptt{p}) := \prod_{i=1}^{n} \left( \absbig{ S_i } - 1 \right)! \ . \]
        \item $\Ptt(\ppf)$ and $\Ptt_n(\ppf)$ are the set of all partitions and $n$-partitions of $\ppf$. 
        \item The trivial 1-partition of $\ppf$ is denoted by $\ptt{e} := \{\ppf=\pp(S)\}$.
        \item The \textbf{maximum partition length} of $\ppf$ is denoted by \[ m(\ppf) := \max\defset{\absbig{\ptt{p}}}{\ptt{p} \in \Ptt(\ppf)} \ . \]
    \end{enumerate}

    \item 
    $\ppf$ is \textbf{non-decomposable} if $\Ptt(\ppf) = \{ \ptt{e} \}$, or equivalently $m(\ppf)=1$ (also called \emph{non-separable} in \cite{APZ2020}). Otherwise, it is \textbf{decomposable}.
\end{enumerate}
\end{definition}

Here is Kochetkov's formula in \cite{YYK15}, which is the main topic of this manuscript. \begin{theorem}\label{thm:passport_simple} 
    Let $\ppf$ be a full passport, then 
    \begin{equation}\label{eq:enumeration_passport_decomposable}
        \absbig{\Tree(\ppf)} = \sum_{\ptt{p} \in \Ptt(\ppf)} (-1)^{|\ptt{p}|-1} (\absbig{\ppf}-1)^{|\ptt{p}|-2} X(\ptt{p}) \ .
    \end{equation}
    In particular, when $\ppf$ is non-decomposable, 
    \begin{equation} \label{eq:enumeration_passport_nondecomposable}
        \absbig{\Tree(\ppf)} = (\absbig{\ppf} - 2)! \ .
    \end{equation}   
\end{theorem}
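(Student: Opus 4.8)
The plan is to establish Theorem~\ref{thm:passport_simple} by a bijection that reads each weighted bi-colored plane tree off as a permutation, using the edge weights to rigidify the geometry, and then to count those permutations by an inclusion--exclusion indexed by the zero-sum partitions of $\ppf$. Write $N := \absbig{\ppf}$. Given a tree $T = (V,E,\wt_E,\Lab)$ of passport $\ppf$, I would walk the contour of $T$ --- the boundary of a thin tubular neighbourhood, oriented so $T$ lies on its left --- equipped with the length-$2\normbig{\ppf}$ metric in which each side of an edge $e$ has length $\wt_E(e)$. This metric removes the choices usually present in such a traversal: the balance condition \eqref{eq:passport_weight} pins a canonical basepoint, and reading the labels in the order their vertices are first met produces a permutation $\Phi(T)$ lying in an explicit set of cardinality $(N-2)!$ --- two of the $N$ labels being determined by the basepoint convention, the remaining $N-2$ freely ordered. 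Conversely, from the weights and such a permutation $\sigma$ I would place the $N$ labels on an oriented circle in the induced order and run a \emph{non-crossing transport}: repeatedly join an innermost still-visible black--white pair by an edge bearing the unique weight forced by the weight balance of the vertices already resolved, then contract; when every forced weight is strictly positive this builds a plane tree $\Psi(\sigma)$ of passport $\ppf$.

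Checking that $\Phi$ and $\Psi$ are mutually inverse where both are defined is a careful but routine induction along the traversal; it identifies $\Tree(\ppf)$ with the set of \emph{admissible} permutations, those on which the transport never produces a non-positive weight. The key structural observation is that the transport can stall, or be forced onto a zero or negative weight, precisely when some proper sub-arc of the circular layout is already weight-balanced --- in which case the labels on that sub-arc span a nontrivial zero-sum subpassport. Hence, when $\ppf$ is non-decomposable, admissibility is automatic and $\absbig{\Tree(\ppf)} = (N-2)!$, which is \eqref{eq:enumeration_passport_nondecomposable} and also the $\ptt{p} = \ptt{e}$ term of \eqref{eq:enumeration_passport_decomposable}.

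For decomposable $\ppf$ I would stratify the permutations according to the coarsest zero-sum partition $\ptt{p} = \{\pp(S_i)\}_{i=1}^{n}$ whose blocks occur as a decomposition of the circular layout into consecutive balanced arcs. A permutation in the stratum of $\ptt{p}$ is exactly an admissible-type configuration internal to each block --- contributing $\prod_i (\absbig{S_i} - 1)! = X(\ptt{p})$ after the same basepoint bookkeeping --- together with gluing data describing how the $n$ arcs are threaded around the circle into a tree on the $n$ blocks, which a Cayley-type count evaluates as $(N-1)^{n-2}$. Summing $\absbig{\Tree(\ppf)}$ over strata is then a M\"obius inversion on the partition lattice: passing from ``coarsest witnessed partition'' to a sum over all partitions attaches the signs $(-1)^{\absbig{\ptt{p}}-1}$ and yields \eqref{eq:enumeration_passport_decomposable}. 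One can phrase this last step instead as a sign-reversing involution on $\bigsqcup_{\ptt{p}} \Omega_{\ptt{p}}$, where $\Omega_{\ptt{p}}$ collects the type-$\ptt{p}$ configurations with sign $(-1)^{\absbig{\ptt{p}}-1}$, whose only fixed points are the genuine trees --- the degenerate trees that would carry a zero-weight edge separating two blocks being exactly what the negative terms cancel.

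I expect the heart of the argument to be the general case: identifying which balanced sub-arc structure a non-admissible permutation witnesses, proving that the corresponding stratum has size $(N-1)^{\absbig{\ptt{p}}-2} X(\ptt{p})$ --- in particular producing the Cayley-type factor $(N-1)^{\absbig{\ptt{p}}-2}$ rather than a genus-zero bi-colored count --- and making the inclusion--exclusion signs, the $m(\ppf)=1$ base case, and the zero-weight degenerations all fit together exactly. A subsidiary but real difficulty is making the encoding canonical to begin with: the metric basepoint and the identity of the two pinned labels must be specified with no arbitrary choice, or else $\Phi$ and $\Psi$ will not be mutually inverse.
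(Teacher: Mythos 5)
Your overall architecture --- encode each tree as a permutation via a weighted contour traversal, observe that admissibility can only fail across a zero-sum sub-collection of labels, obtain $(\absbig{\ppf}-2)!$ in the non-decomposable case, and treat decomposable passports by an inclusion--exclusion over $\Ptt(\ppf)$ --- matches the paper's strategy in outline, and your contour walk is essentially the paper's combing/folding construction in disguise. The non-decomposable case is right in spirit, except that your ``canonical basepoint'' is never produced: the paper deliberately sidesteps this by counting rooted (or twice-marked) trees and dividing by $\absbig{\ppf}-1$ (or $\absbig{\ppf}(\absbig{\ppf}-1)$) at the very end, rather than pinning two labels a priori. As you concede, without that basepoint $\Phi$ and $\Psi$ are not yet mutually inverse.

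The genuine gaps are in the decomposable case, and they are exactly where the work lies. First, the stratum count: you assert that the configurations witnessing a partition $\ptt{p}=\{\pp(S_i)\}_{i=1}^{n}$ number $(\absbig{\ppf}-1)^{n-2}\,X(\ptt{p})$ with each block contributing $(\absbig{S_i}-1)!$ internal configurations. This fails whenever a block $\pp(S_i)$ is itself decomposable: its internal configurations are counted by the admissible set of the \emph{subpassport} (the paper's $\absbig{\PP_+(\pp_i)}$ or $\absbig{\PT_+(\pp_i)}$), which is strictly smaller than $(\absbig{S_i}-1)!$; the factor $X(\ptt{p})$ only appears after the recursion over all refinements has been fully inverted, not as the size of any single stratum. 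Your ``Cayley-type'' factor $(\absbig{\ppf}-1)^{n-2}$ is likewise asserted, not derived; in the paper it emerges from an insertion count (a rising factorial $(\absbig{\pp_0}-1)(\absbig{\pp_0})\cdots$) combined with the vanishing identity of Lemma \ref{lem:subset_sum_power}, and is not the cardinality of any natural set of trees on $n$ blocks. Second, the inversion: M\"obius inversion on the partition lattice does \emph{not} attach the signs $(-1)^{\absbig{\ptt{p}}-1}$, because the M\"obius function from $\hat0$ to $\ptt{p}$ carries factorial factors $\prod_i(-1)^{\absbig{S_i}-1}(\absbig{S_i}-1)!$. Producing the bare sign requires the Stirling-number identity $(-x)^{n}=\sum_{k}S(n,k)(-1)^{k}x(x+1)\cdots(x+k-1)$ summed over the interval of partitions coarser than a fixed one (the paper's Lemma \ref{lem:rf_Pos_multi_eq_power_X}), or an equivalent perturbation/cycle-lemma argument; neither your M\"obius inversion nor your unspecified sign-reversing involution supplies this step.
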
 

\bigskip
The original proof of Kochetkov in \cite{YYK15}, also rearranged in \cite[\S 11.4]{APZ2020}, transforms each LWBP-tree of non-decomposable full passport into a meromorphic function on the complex plane with prescribed information of the zeros and poles. Then the enumeration problem reduces to counting all possible coefficients of a rational function, which are constrained by a system of algebraic equations. Since most equations are non-linear, some algebraic geometry style arguments are used to enumerate solutions to these equations. As a consequence, such proof is highly non-constructive. 
In \cite[Chapter 12]{APZ2020}, Adrianov-Pakovich-Zvonkin ask for ``a bijective proof of the formula'' for non-decomposable case (see the problem ``\textsc{Enumeration according to a passport}''). Our approach gives an affirmative answer. Besides, it is combinatorial and constructive, naturally feasible for real-valued weights, hence providing a theoretical algorithm for finding all weighted bi-colored plane trees of a prescribed full passport. 

For the decomposable case, both proofs in \cite{YYK15} and here use some combinatorial tricks to relate it to non-decomposable case. 
Adrianov-Pakovich-Zvonkin also ask, in the same question of \cite{APZ2020}, for an explicit formula for decomposable case ``without resorting to inclusion-exclusion''. We give some response in Remark \ref{rmk:proof_without_in-ex}.

\bigskip
\subsection{Outline of the proof} 
Here is a summary of our approach. From now on, \emph{all passports are full by default.} 

As intermediate steps, we need the concept of \textbf{forest}, which is a disjoint union of trees, possibly disconnected. Then we can talk about LWBP-forests and the set $\Fore(\ppf)$, by analogy with Definition \ref{def:labeled_tree}. 
Since each tree is contractable as a subset on plane, for any LWBP-forest, the relative position of different connected components does not affect its isomorphic class. 

\medskip
Let $\ppf=(S,\one,\wt)$ be a full passport with $N :=\absbig{S} = \absbig{\ppf} \geq 2$. Since the labeling of a LWPB-tree of full passport is a bijection from its vertex set to $S$, for convenience, we shall identify both the index set $S$ and the vertex set $V$ of any tree $T \in \Tree(\ppf)$ with  
\[ [N]:=\{1,2,\cdots,N\} \ . \]

\begin{definition}\label{def:twice_marked_tree}
    Consider the following sets, with $a,b\in[N]$ regarded as vertices: 
\begin{equation}\begin{split}
    &\FM(\ppf) := \defset{\left( G; a, b \right)}{G \in \Fore(\ppf),\ a \neq b \in [N]} \ , \\
    &\TM(\ppf) := \defset{\left( T; a, b \right)}{T \in \Tree(\ppf),\ a \neq b \in [N]} \ , \\
    &\TR(\ppf) := \defset{\left( T;a,b \right)}{
    \begin{array}{c}
          a,b \textrm{ are the black and white} \\
          \textrm{vertices of an edge in } T
    \end{array} 
    } \subset \TM(\ppf) \ . 
\end{split}\end{equation}
$\FM(\ppf)$ and $\TM(\ppf)$ are called the set of \textbf{twice-marked} forests and trees of passport $\ppf$. $\TR(\ppf)$ is called the set of \textbf{rooted trees} of passport $\ppf$. 
\end{definition}

\medskip
\begin{definition}
    Let $\PP(\ppf)$ be the set of all permutations of elements in $S=[N]$. A permutation $\perm\in\PP(\ppf)$ is denoted as a series $\perm=(s_1, s_2, \cdots, s_N)$. 
\end{definition}

The core of our argument is a geometric process called \emph{combing map} (Definition \ref{def:comb}) that turns any permutation into a twice-marked forest (see Figure \ref{fig:combing} for a preview).  
This map restricts to a bijection $$ \TR(\ppf) \ \cong \ \PT_+(\ppf) $$ between the set of rooted trees and an explicitly characterized subset $\PT_+(\ppf)$ of $\PP(\ppf)$ (Definition \ref{def:pos_perm}, Proposition \ref{prop:pos_and_tree}). 
The enumeration is then settled by an explicit formula for $|\PT_+(\ppf)|$ (Theorem \ref{thm:number_PosT}). 
\begin{proof}[Proof of Theorem \ref{thm:passport_simple}]
\[
    \absbig{\Tree(\ppf)} = \frac{\absbig{\TR(\ppf)}}{\absbig{\ppf}-1} \xlongequal[\ref{prop:pos_and_tree}]{\textrm{Prop.}} \frac{|\PT_+(\ppf)|}{|\ppf|-1} \xlongequal[\ref{thm:number_PosT}]{\textrm{Thm.}} \sum_{\ptt{p}\in\Ptt(\ppf)}  \frac{(-\absbig{\ppf} + 1)^{\absbig{\ptt{p}} - 1}}{|\ppf|-1} X(\ptt{p}) \ .
\]
\end{proof}

The formula for $|\PT_+(\ppf)|$ is obtained by strong induction on the maximal partition length $m(\ppf)$, hence the non-decomposable case initiates the induction. 
The combing map actually yields a larger bijection
$$ \TM(\ppf) \ \cong \ \PT(\ppf) $$
between the set of twice-marked trees and a set $\PT(\ppf)$ containing $\PT_+(\ppf)$ (Definition \ref{def:PT}, Theorem \ref{thm:comb_biject}). 
When $\ppf$ is non-decomposable, $\PT(\ppf)$ coincides with $\PP(\ppf)$, so \eqref{eq:enumeration_passport_nondecomposable} is immediate, and formula for $|\PT_+(\ppf)|$ follows from the smaller bijection. 

When $\ppf$ is decomposable, this coincidence no longer occurs, and we enumerate $\PT_+(\ppf)$ recursively. 
By studying the partial order on the set of partitions, utilizing the inversion formula for Stirling number of the second kind, together with many other ingredients, we derive the desired formula.

\medskip

\begin{remark}\label{rmk:real_weighted}
    Bi-colored plane trees with real weights are equivalent to a special class of meromorphic differentials on Riemann sphere $\overline{\mathbb{C}}$. 
    Following notations in Definition \ref{def:passport}, there exists an LWBP-tree of trivial passport $\pp$, if and only if there exists a meromorphic 1-form on $\overline{\mathbb{C}}$ satisfying that
    \begin{itemize}
        \item it has a unique zero of order $p+q-2$ and $(p+q)$ simple poles;
        \item the integral along any closed path avoiding the zero and poles is real;
        \item there are exactly $\lambda(s)$ simple poles of residue $\wt(s)$ for all $s\in S$. 
    \end{itemize}
    Also see \cite{SXY22axv} for a similar statement. A branched cover between two Riemann spheres with three branched points in \cite{SXY22axv} will correspond to a bi-colored plane tree with integer weights. 
    
    The existence of meromorphic differentials on Riemann surfaces is completely known (for example, see \cite{GenTah21}). With these results, \eqref{eq:exist_WBPTree} can be generalized as follows. 
    If there exists $L>0$ such that $L \absbig{\wt(s)} \in \Zpos$ for all $s\in S$, then 
    \[ \gcd(\wt) := \frac{1}{L} \cdot \gcd\{ L \absbig{\wt(s)}, s \in S \} \]
    is well-defined; otherwise, $\gcd(\wt):=0$. 
    Then there exists a tree with real-valued weights of trivial passport $\pp$ if and only if 
    \begin{equation}\label{eq:exist_WBPTree_realW}
        (p + q - 1) \cdot \gcd(\wt) \leq \Vert \pp \Vert \ .
    \end{equation}
    Meromorphic differential is a whole other story, so we will not expand on this topic here. \hfill $\square$
\end{remark}

Note that in both proof of Kochetkov and this manuscript, the enumeration formula does not depend on the existence criterion. So there are natural questions for further researches:  
\begin{enumerate}
    \item Can we recover the condition 
    \eqref{eq:exist_WBPTree_realW} by Theorem \ref{thm:passport_simple} or the approach here? 
    \item Can we obtain Harary-Tutte's enumeration formula \cite{HarTut64} for non-weighted trees by Theorem \ref{thm:passport_simple}?
    \item Can we derive Pakovich-Zvonkin's classification result \cite{PakZvo14} about passports with $\absbig{\Tree(\pp)} = 1$ from Theorem \ref{thm:passport_simple}?
\end{enumerate}

\bigskip
\subsection{Notations and organizations} 
More declarations about the notations we are going to adopt.

First, let us introduce a practical method for describing an WBP-tree, using the \textbf{ribbon graph} structure. We refer to \cite[\S 1.3, \S 1.5]{LsZak04}, \cite[\S 1.1]{EmM13} for details of the following definition and proposition, and \cite[\S 4.1]{KonZor03} for the ``cyclic order'' description.

\begin{definition}\label{def:cyclic_action}
    Let $T=(V,E,\wt_E)$ be a WBP-tree. Recall that $E(v)$ is the set of edges adjacent to $v$. For each $v \in V$, the orientation of the plane in which the tree is embedded induces a \textbf{cyclic action} $\sigma_v$ of order $\deg(v)$ on $E(v)$: $\forall e \in E(v)$, $\sigma_v(e)$ is the anticlockwise next edge in $E(v)$ around $v$. 
\end{definition}

\begin{proposition}\label{prop:eq_ribbon}
    An LWBP-tree is completely determined, up to isomorphisms, by an abstract tree $(V,E)$ with a family of cyclic actions $\sigma_v$ at each vertex $v$, together with the weight function $\wt_E$ and the labeling $\Lab$.
    
    More precisely, two LWBP-trees $T_1, T_2$ 
    are isomorphic if and only if there is a graph isomorphism $I:(V_1, E_1) \to (V_2, E_2)$ such that $\wt_{E_2} \circ\ I = \wt_{E_1}$, $\Lab_2 \circ\ I = \Lab_1$ and  
    \begin{equation}\label{eq:cyclic_compatible}
        I\big(\sigma_v (e) \big)= \sigma_{I(v)}\big( I(e) \big) 
    \end{equation}
    for any $v \in V_1, e \in E(v) \subset E_1$. 
\end{proposition}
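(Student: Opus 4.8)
The statement to prove is Proposition~\ref{prop:eq_ribbon}, which asserts that an LWBP-tree is completely determined up to isomorphism by the combinatorial data of an abstract tree with cyclic actions at each vertex, plus the weight function and labeling.

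\medskip

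The plan is to prove this in two directions, establishing it as an instance of the standard correspondence between embedded graphs (maps) on surfaces and ribbon graphs, specialized to the tree case on the plane/sphere. First I would fix terminology: given an LWBP-tree $T=(V,E,\wt_E,\Lab)$ embedded in $\RR^2$, the cyclic actions $\sigma_v$ of Definition~\ref{def:cyclic_action} are well-defined because the chosen orientation of $\RR^2$ gives a coherent notion of ``anticlockwise next edge'' around each vertex. The forward (easy) direction is to check that an isomorphism $I:\RR^2\to\RR^2$ of LWBP-trees, being an \ortprehomeo, restricts to a graph isomorphism $(V_1,E_1)\to(V_2,E_2)$ that intertwines the cyclic actions as in \eqref{eq:cyclic_compatible}; this is because orientation-preserving homeomorphisms of the plane preserve the local anticlockwise cyclic order of edges around each vertex, and they preserve weights and labels by definition. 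So the combinatorial data is an isomorphism invariant.

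\medskip

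The substantive direction is the converse: given two embedded trees whose combinatorial data $(V,E,\sigma_\bullet,\wt_E,\Lab)$ agree under some abstract graph isomorphism $I$, I must construct an actual \ortprehomeo\ of $\RR^2$ realizing it. The strategy here is the usual one for maps on surfaces. One canonically thickens an embedded tree $T$ to a closed regular neighborhood $N(T)$, which is a disk (since $T$ is contractible), whose boundary circle $\partial N(T)$ is subdivided into arcs corresponding to the ``corners'' (flag pairs) of the tree; the cyclic orders $\sigma_v$ record exactly how these corners are glued around each vertex, hence they determine the combinatorial type of $\partial N(T)$ together with its identification pattern, i.e. they determine $N(T)$ up to orientation-preserving homeomorphism rel the boundary-labeling. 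Since two homeomorphic disks embedded in $\RR^2$ (or $S^2$) have homeomorphic, in fact \ortpre ly homeomorphic, complements by the Jordan--Sch\"onflies theorem, one can extend a homeomorphism $N(T_1)\to N(T_2)$ respecting the data to all of $\RR^2$. Carrying the weights $\wt_E$ and labels $\Lab$ along is automatic once the combinatorial data matches. I would phrase the construction by induction on $|E|$: build the homeomorphism edge by edge, attaching each new edge in the slot prescribed by the cyclic action, invoking at each step that any two embeddings of an arc into a disk with prescribed endpoints and prescribed ``side'' are ambient isotopic. Alternatively, one simply cites \cite[\S1.3, \S1.5]{LsZak04} or \cite[\S1.1]{EmM13}, where this equivalence of categories (embedded graphs on oriented surfaces $\leftrightarrow$ ribbon graphs) is established in full, and observes that the weight and labeling decorations are transported functorially.

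\medskip

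The main obstacle, and the only place requiring care, is the converse construction of the \ortprehomeo: one must be sure that the cyclic data $\sigma_v$ is \emph{sufficient} (not merely necessary) to pin down the planar embedding up to orientation-preserving homeomorphism, and in particular that no extra global data — such as which face is the outer face — is needed. For a tree embedded in the plane this is true precisely because a tree has a single face (its complement in $S^2$ is a single disk), so there is no choice of face structure to make; this is exactly why the statement is clean for trees and would fail for general planar graphs without further bookkeeping. I would make this point explicit, noting that the genus-$0$, single-face situation is what lets the regular-neighborhood disk $N(T)$ have connected complement, so that Sch\"onflies applies directly. Everything else — compatibility with weights and labels — is formal.
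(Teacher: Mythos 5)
Your proof is correct and is exactly the standard ribbon-graph argument (regular neighborhood, Schönflies extension, single face for trees) that the paper itself does not spell out but delegates to its references \cite{LsZak04}, \cite{EmM13}, \cite{KonZor03}. Your explicit remark that the single-face property of trees is what makes the cyclic data sufficient without any extra face bookkeeping is a worthwhile clarification that the paper leaves implicit.
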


\medskip
Secondly, we introduce more specific notations for the passport.
\begin{notation}[also see \cite{Zvo13}]\label{not:passport}
    For $w\in \wt(S) \subset \Rnz$, we shall present $\wt^{-1}(w) \subset S$ as a set of integers with distinct subscript $\{ w_1, w_2, \cdots, w_a \}$, where $a=\lvert \wt^{-1}(w) \rvert$. 
    And we shall denote $\pp$ in the \textbf{power notation} 
    \[ \pp = \prod_{s\in S} s^{\lambda(s)} \ . \] 
    Denote $\overline{n}:= -n \in \mathbb{Z}_{-}$ for compactness. 
    For example, $\pp=(1_1^2\ 1_2\ \ol{3}_1)$ means 
    \begin{itemize}
        \item $S=\{1_1, 1_2, \ol{3}_1\}$; 
        \item $\lambda(1_1)=2$, $\lambda(1_2) = \lambda(\ol{3}_1)=1$; 
        \item $\wt(1_1)= \wt(1_2) = 1$, $\wt(\ol{3}_1)=-3$. 
    \end{itemize} 
\end{notation}

\begin{example} \label{eg:tree_label}
    Figure \ref{fig:tree_label} exhibits some LWBP-trees of various kinds of passports. 
    \begin{enumerate}
        \item Figure \ref{subfig:tree_label_A} is a tree of trivial passport $(1^3\ \ol{3})$. 
        \item Figure \ref{subfig:tree_label_A*} is a tree of passport $(1_1^2\ 1_2\ \ol{3})$. 
        \item Figure \ref{subfig:tree_label_B}, \ref{subfig:tree_label_C},  \ref{subfig:tree_label_D} are trees of full passport $(1_1\ 1_2\ 1_3\ \ol{3})$.
        Trees in Figure \ref{subfig:tree_label_B} and \ref{subfig:tree_label_D} are isomorphic, differing by a $2\pi/3$-rotation about the central white vertex. 
        But trees in Figure \ref{subfig:tree_label_B} and \ref{subfig:tree_label_C} are not isomorphic. This can be checked by the cyclic action at the white vertex.     
        \hfill $\square$
    \end{enumerate}
\end{example}

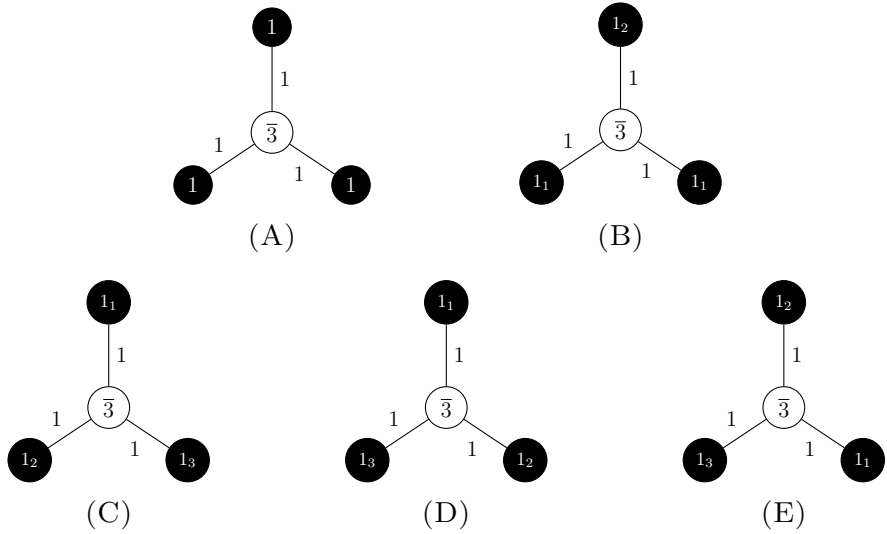
\begin{figure}[ht]
    \begin{subfigure}{0.31\textwidth}
        \centering
        \begin{tikzpicture}[scale=0.7, transform shape]
            \node[blk] (up) at (0, 2) {1};
            \node[wht] (mid) at (0, 0) {$\ol{3}$};
            \node[blk] (left) at (-1.5, -1) {1};
            \node[blk] (right) at (1.5, -1) {1};
            \draw (up) -- (mid) node[mid_auto] {1};
            \draw (left) -- (mid) node[mid_auto] {1};
            \draw (right) -- (mid) node[mid_auto] {1};
        \end{tikzpicture}
        \caption{}
        \label{subfig:tree_label_A}
    \end{subfigure}
    \begin{subfigure}{0.31\textwidth}
        \centering
        \begin{tikzpicture}[scale=0.7, transform shape]
            \node[blk] (up) at (0, 2) {\footnotesize $1_2$};
            \node[wht] (mid) at (0, 0) {$\ol{3}$};
            \node[blk] (left) at (-1.5, -1) {\footnotesize $1_1$};
            \node[blk] (right) at (1.5, -1) {\footnotesize $1_1$};
            \draw (up) -- (mid) node[mid_auto] {1};
            \draw (left) -- (mid) node[mid_auto] {1};
            \draw (right) -- (mid) node[mid_auto] {1};
        \end{tikzpicture}
        \caption{}
        \label{subfig:tree_label_A*}
    \end{subfigure}
    
    \vspace{0.3cm}
    
    \begin{subfigure}{0.3\textwidth}
        \centering
        \begin{tikzpicture}[scale=0.7, transform shape]
            \node[blk] (up) at (0, 2) {\footnotesize $1_1$};
            \node[wht] (mid) at (0, 0) {$\ol{3}$};
            \node[blk] (left) at (-1.5, -1) {\footnotesize $1_2$};
            \node[blk] (right) at (1.5, -1) {\footnotesize $1_3$};
            \draw (up) -- (mid) node[mid_auto] {1};
            \draw (left) -- (mid) node[mid_auto] {1};
            \draw (right) -- (mid) node[mid_auto] {1};
        \end{tikzpicture}
        \caption{}
        \label{subfig:tree_label_B}
    \end{subfigure}
    \begin{subfigure}{0.3\textwidth}
        \centering
        \begin{tikzpicture}[scale=0.7, transform shape]
            \node[blk] (up) at (0, 2) {\footnotesize $1_1$};
            \node[wht] (mid) at (0, 0) {$\ol{3}$};
            \node[blk] (left) at (-1.5, -1) {\footnotesize $1_3$};
            \node[blk] (right) at (1.5, -1) {\footnotesize $1_2$};
            \draw (up) -- (mid) node[mid_auto] {1};
            \draw (left) -- (mid) node[mid_auto] {1};
            \draw (right) -- (mid) node[mid_auto] {1};
        \end{tikzpicture}
        \caption{}
        \label{subfig:tree_label_C}
    \end{subfigure}
    \begin{subfigure}{0.3\textwidth}
        \centering
        \begin{tikzpicture}[scale=0.7, transform shape]
            \node[blk] (up) at (0, 2) {\footnotesize $1_2$};
            \node[wht] (mid) at (0, 0) {$\ol{3}$};
            \node[blk] (left) at (-1.5, -1) {\footnotesize $1_3$};
            \node[blk] (right) at (1.5, -1) {\footnotesize $1_1$};
            \draw (up) -- (mid) node[mid_auto] {1};
            \draw (left) -- (mid) node[mid_auto] {1};
            \draw (right) -- (mid) node[mid_auto] {1};
        \end{tikzpicture}
        \caption{}
        \label{subfig:tree_label_D}
    \end{subfigure}
    \caption{Some LWBP-trees of various kinds of passports, all with three vertices of weight $+1$ and one vertex of weight $-3$.}
    \label{fig:tree_label}
\end{figure}

\begin{remark}
Enumerating trees of a given \emph{trivial} passport is usually complicated, since such a tree may have non-trivial automorphism group. 
However, with the help of labeling, each LWBP-tree of \emph{full} passport must has trivial automorphism group. That is why we turn to full passport here. Furthermore, Theorem \ref{thm:passport_simple} implies an enumeration for general passport, where an LWBP-tree of automorphism group $G$ is counted as a rational number $1/|G|$. 
Also see \cite[\S 11.4]{APZ2020}. 
\hfill $\square$
\end{remark}

\medskip
The remaining of this manuscript is organized as follows. 
\begin{itemize}
    \item Section \ref{sec:permutation} explains the combing map. The subset $\PT(\ppf)$ of permutations that build trees through the combing map is characterized. Then the larger bijection $\TM(\ppf) \cong \PT(\ppf)$ is obtained, hence proving the non-decomposable case of Theorem \ref{thm:passport_simple}.
    \item Section \ref{sec:decomposable} proves all intermediate formulae for Theorem \ref{thm:passport_simple}. Two subsets $\PP_+(\ppf), \PT_+(\ppf)$ of permutations are introduced first, with a recursive relation between them. The smaller bijection $\TR(\ppf) \cong \PT_+(\ppf)$ is directly obtained. 
    Then the enumerations for $\PP_+(\ppf), \PT_+(\ppf)$ are given. Also see the summary at the beginning of Section \ref{sec:decomposable}. 
    \item The appendix presents some computational results. The first one lists the set $\PT_+(\ppf) \cong \TR(\ppf)$ for a concrete full passport. 
    The second one shows numerical results of Kochetkov's formula, with the total weight $4 \leq \Vert \ppf \Vert \leq 7$.  
    A notation table is attached at the end.
\end{itemize}

\bigskip\noindent
{\bf Acknowledgment. }
The authors would like to express their sincere thanks to Bin Xu for providing influential support on this work. Yi Song is supported by the National Natural Science Foundation of China (Grant No. 125B1019).

\bigskip
\section{Permutation representation} \label{sec:permutation}

In this section, we first construct the ``combing map'' (Construction \ref{cnstr:combing_step1}, \ref{cnstr:combing_step2} and Definition \ref{def:comb}) that builds a twice-marked forest from a permutation. 
Next we characterize the ``tree permutations'' (Definition \ref{def:PT}) that produces trees (rather than disconnected forests) under the combing map (Proposition \ref{prop:comb_PT_tree}). 
The reverse process from a twice-marked tree to a permutation is also constructed, called ``folding map'' (Construction \ref{cnstr:fold1}, \ref{cnstr:fold2}). 
Then we obtain the bijection from the set of tree permutations to the set of twice-marked LWBP-trees (Theorem \ref{thm:comb_biject}). This directly implies \eqref{eq:enumeration_passport_nondecomposable} of Theorem \ref{thm:passport_simple} (Corollary \ref{cor:count_nondecomp}).

Recall that for a fixed given full passport $\ppf=(S, \one, \wt)$, $N := \absbig{\ppf} = \absbig{S} \geq 2$. The index set $S$, the vertex set of a tree in $\Tree(\ppf)$ (or a forest in $\Fore(\ppf)$) are all identified with $[N]=\{1,2,\cdots,N\}$. 
Note that $\Tree(\ppf)$ might be empty while $\Fore(\ppf)$ is never empty. This is also implied by the construction next.

\bigskip
\subsection{From permutation to forest}\label{ssec:perm_to_tree}

Now we construct the combing map that eventually produces a LWBP-forest with two marked vertices from a permutation of the index set. It is inspired by the train-track technique for foliation on surfaces, especially Construction 1.7.7 of Penner-Harer \cite{PenHar92}.

We will do the following two steps to obtain a graph which turns out to be a forest. The first step is drawing a ``histogram'' region on the plane. 

\begin{construction}\label{cnstr:combing_step1}
    A closed region $R(\perm)$ in $\RR^2$ is defined as follows.
\begin{enumerate}
    \item For each $i \in [N]$, the $i$-th \textbf{cumulative sum} is $H_i := \sum_{j=1}^{i} \wt(s_j)$. \\
    Also define $H_0 := 0$. Note that $H_N = 0$. 
    \item Draw a \textbf{vertical rectangle} in $\RR^2$ (possibly a segment) for each $i$ as
    \begin{equation*}
        Rec_i :=
        \big[\ i\ ,\ i+1\ \big] \times \big[\ \min\{0, H_i\}\ ,\ \max\{0, H_i\}\ \big] \ .
    \end{equation*}
    \item Then $R(\perm) := \bigcup_{i=1}^N {Rec_i}$ is called the \textbf{region} of permutation $\perm$.
    \item The boundary of $R(\perm)$ consists of three kinds of line segments:
    \begin{itemize}
        \item the $i$-th \textbf{horizontal line} $[i, i+1] \times \{H_i\}$, 
        \item the $i$-th \textbf{rising vertical line} $\{i\} \times [H_{i-1}, H_i]$ when $\wt(s_i) > 0$, or \\
        the $i$-th \textbf{falling vertical line} $\{i\} \times [H_i, H_{i-1}]$ when $\wt(s_i) < 0$, 
        \item the segment on the x-axis: $[1, N] \times \{0\}$.
    \end{itemize}
    Notice that the length of $i$-th vertical lines is just $\absbig{H_i - H_{i-1}} = \absbig{\wt(s_i)}$. 
\end{enumerate}
\end{construction}

As an intermediate step, we need to build a labeled weighted bi-colored ribbon graph (possibly disconnected) instead of \emph{plane} graph. Soon in Proposition \ref{prop:GP_forest} we will show that it is a forest indeed, hence plane.

\begin{construction}\label{cnstr:combing_step2} 
    A weighted bi-colored ribbon graph $(V, E, \wt_E)$ with labeling $\Lab:V\to[N]$ is obtained from $R(\perm)$ as follows.
\begin{enumerate}
    \item $V$ is identified with $[N]$ and $\Lab(i):=s_i$. 
    A vertex $i$ is black or white when $\wt(s_i) > 0$ or $<0$. Hence, a black vertex $i$ is related to the $i$-th rising vertical line, while a white vertex is related to the falling one.
    \item Extend every horizontal line in both directions until it reaches a boundary vertical line of $R(\perm)$. These maximally extended segments, together with the segment on the x-axis, become the new horizontal cuts.
    \item These cuts split $R(\perm)$ into \textbf{horizontal rectangles}. Each rectangle, bounded by the $k$-th and $l$-th vertical lines, creates an edge $e$ connecting vertex $k$ to $l$ with weight $\wt_E(e)$ equal to its height; rectangle and edge share the same symbol.
    \item Take $\{e_1,\dots ,e_r\}$ as the horizontal rectangles attached to the right of $i$-th vertical line, listed bottom-to-top, and $\{e_{r+1},\dots ,e_{r+k}\}$ as those attached to the left, listed top-to-bottom. 
    Define the cyclic action $\sigma_i$ at vertex $i$ as
    \begin{equation*}
        e_1 \mapsto e_2 \mapsto \cdots e_r \mapsto e_{r+1} \mapsto \cdots e_{r+l} \mapsto e_1 \ .
    \end{equation*}
\end{enumerate}
This labeled weighted bi-colored ribbon graph is denoted by $G(\perm)$.
\end{construction}

See Figure \ref{fig:combing} as an example for $\perm=(2_2, 2_3, \ol{3}_2, \ol{3}_1, 2_1)$. 
Note that the weight at each vertex $v_i$ is exactly $\absbig{\wt(s_i)}$. 

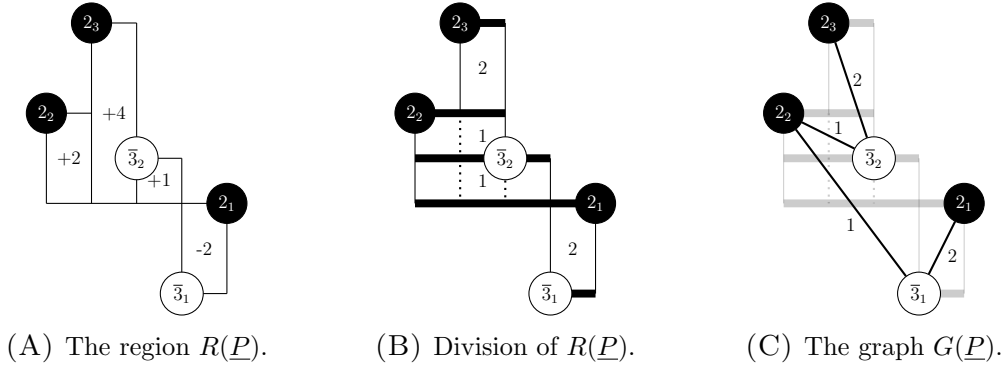
\begin{figure}[tb]
\begin{subfigure}{0.3\textwidth}    
    \centering
    \begin{tikzpicture}[scale=0.6, transform shape]
        \draw (0,0) -- (4,0); 
        \draw (0,0) -- (0,2);
        \draw (1,0) -- (1,4); 
        \draw (2,0) -- (2,4);
        \draw (3,1) -- (3,-2);
        \draw (4,-2) -- (4,0);
        \draw (0,2) -- (1,2);
        \draw (1,4) -- (2,4);
        \draw (2,1) -- (3,1);
        \draw (3,-2) -- (4,-2);
        
        \node[blk](2_2) at (0,2) {$2_2$}; 
        \node[blk](2_3) at (1,4) {$2_3$};
        \node[wht](3_2) at (2,1) {$\ol{3}_2$};
        \node[wht](3_1) at (3,-2) {$\ol{3}_1$};
        \node[blk](2_1) at (4,0) {$2_1$}; 
        \node at (0.5,1) {+2};
        \node at (1.5,2) {+4};
        \node at (2.5,0.5) {+1};
        \node at (3.5,-1) {-2};
    \end{tikzpicture}
    \caption{\footnotesize The region $R(\perm)$.} 
\end{subfigure}
\quad
\begin{subfigure}{0.3\textwidth}    
    \centering
    \begin{tikzpicture}[scale=0.6, transform shape]
        \draw (0,0) -- (0,2);
        \draw (1,2) edge[thick, dotted] (1,0) edge (1,4); 
        \draw (2,1) edge[thick, dotted] (2,0) edge (2,4);
        \draw (3,1) -- (3,-2);
        \draw (4,-2) -- (4,0);
        \draw[line width=3pt] (0,0) -- (4,0); 
        \draw[line width=3pt] (0,2) -- (2,2);
        \draw[line width=3pt] (1,4) -- (2,4);
        \draw[line width=3pt] (0,1) -- (3,1);
        \draw[line width=3pt] (3,-2) -- (4,-2);
        
        \node[blk](2_2) at (0,2) {$2_2$}; 
        \node[blk](2_3) at (1,4) {$2_3$};
        \node[wht](3_2) at (2,1) {$\ol{3}_2$};
        \node[wht](3_1) at (3,-2) {$\ol{3}_1$};
        \node[blk](2_1) at (4,0) {$2_1$}; 
        \node at (1.5, 3) {2};
        \node at (1.5, 1.5) {1};
        \node at (1.5, 0.5) {1};
        \node at (3.5, -1) {2};
    \end{tikzpicture}
    \caption{\footnotesize Division of $R(\perm)$.}
\end{subfigure}
\quad
\begin{subfigure}{0.3\textwidth}    
    \centering
    \begin{tikzpicture}[scale=0.6, transform shape]
        \draw[opacity=0.2] (0,0) -- (0,2);
        \draw[opacity=0.2] (1,2) edge[thick, dotted] (1,0) edge (1,4); 
        \draw[opacity=0.2] (2,1) edge[thick, dotted] (2,0) edge (2,4);
        \draw[opacity=0.2] (3,1) -- (3,-2);
        \draw[opacity=0.2] (4,-2) -- (4,0);
        \draw[opacity=0.2, line width=3pt] (0,0) -- (4,0); 
        \draw[opacity=0.2, line width=3pt] (0,2) -- (2,2);
        \draw[opacity=0.2, line width=3pt] (1,4) -- (2,4);
        \draw[opacity=0.2, line width=3pt] (0,1) -- (3,1);
        \draw[opacity=0.2, line width=3pt] (3,-2) -- (4,-2);
        
        \node[blk](2_2) at (0,2) {$2_2$}; 
        \node[blk](2_3) at (1,4) {$2_3$};
        \node[wht](3_2) at (2,1) {$\ol{3}_2$};
        \node[wht](3_1) at (3,-2) {$\ol{3}_1$};
        \node[blk](2_1) at (4,0) {$2_1$}; 
        \draw[thick] (2_3) -- node[pos=0.7, above=10pt] {2} (3_2);
        \draw[thick] (2_2) -- node[pos=0.7, above] {1} (3_2);
        \draw[thick] (2_2) -- node[below=6pt] {1} (3_1);
        \draw[thick] (2_1) -- node[pos=0.1, below=10pt] {2} (3_1);
        
    \end{tikzpicture}
    \caption{\footnotesize The graph $G(\perm)$.}
\end{subfigure}
    \caption{The combing map.}
    \label{fig:combing}
\end{figure}

\medskip
Now we show $G(\perm)$ is a forest. We start with basic and important properties. 

\begin{lemma}\label{lem:edge_above_below}
    Each horizontal rectangle in $R(\perm)$ defines an edge $e$ connecting some black vertex $k$ and some white vertex $l$. The corresponding rectangle is above the x-axis when $k < l$, and below the x-axis when $k > l$.
\end{lemma}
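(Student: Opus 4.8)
The plan is to unpack the construction of horizontal rectangles and read off the colors and the sign of the height directly from the cumulative sums $H_i$. First I would recall that, by Construction \ref{cnstr:combing_step2}, a horizontal rectangle is a maximal piece of $R(\perm)$ cut out by the extended horizontal lines; its left and right sides lie on two vertical lines, say the $k$-th and $l$-th with $k<l$ (as indices along the $x$-axis, not as colors yet), and its interior meets no other vertical boundary line of $R(\perm)$. The key observation is that the $k$-th vertical line is a \emph{rising} line (so $k$ is black) precisely when $\wt(s_k)>0$, i.e.\ when $H_k>H_{k-1}$, and a \emph{falling} line (so $k$ is white) when $H_k<H_{k-1}$; the same for the $l$-th line. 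So the content of the lemma is: the two vertical sides of any horizontal rectangle have opposite "rising/falling" type, and the rectangle lies above the $x$-axis iff its \emph{left} side is rising.

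Next I would argue this by looking at the horizontal strip at the height of the rectangle. Fix the rectangle $e = [k,l]\times[c,d]$ with $0$ not in the open interval $(c,d)$ — that it lies entirely above or entirely below the $x$-axis is itself part of the claim and follows because the segment $[1,N]\times\{0\}$ is one of the horizontal cuts, so no rectangle straddles the $x$-axis. Consider first the case $d>0$ (rectangle above the axis). The crucial combinatorial fact is a "ballot"-type monotonicity: for the rectangle to be bounded on the left by the $k$-th vertical line, that line must reach up to at least height $d$, and the horizontal cut at the \emph{bottom} of the rectangle (at height $c\ge 0$) must be blocked on the left by the $k$-th vertical line. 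Unwinding which vertical lines block a given horizontal cut, one finds $H_{k-1}\le c < d\le H_k$ (for the left side) — so $\wt(s_k) = H_k - H_{k-1} > 0$, $k$ is black. Symmetrically, the right side forces $H_l \le c < d \le H_{l-1}$, so $\wt(s_l)<0$ and $l$ is white. Hence, writing the edge as connecting black vertex $k$ to white vertex $l$, we have $k<l$, matching the statement. The case $d\le 0$ (equivalently the rectangle below the axis) is the mirror image: the left side is falling and the right side rising, so the black endpoint has the \emph{larger} index, i.e.\ $k>l$ in the notation of the statement.

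The main obstacle I anticipate is making the "blocking" argument about horizontal cuts fully rigorous rather than merely visually obvious: one has to pin down, for a rectangle at vertical position $[c,d]$, exactly which indices $i$ contribute a vertical boundary segment of $R(\perm)$ spanning that height range, and then identify the first such index on each side. I would handle this by the cumulative-sum characterization: the $i$-th vertical boundary line of $R(\perm)$ occupies heights between $H_{i-1}$ and $H_i$, and the $i$-th extended horizontal cut sits at height $H_i$; a clean way to phrase it is that the left wall of the rectangle containing a given point $(x_0,y_0)$ with $y_0>0$ is the vertical segment of the largest index $k\le \lfloor x_0\rfloor$ with $H_{k-1} < y_0$ and... — so it is really a short induction / extremal-index argument on the profile $H_0,H_1,\dots,H_N$. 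Once that bookkeeping is set up, the color of each endpoint and the position relative to the $x$-axis both drop out of the inequalities $H_{k-1}<y_0\le H_k$ versus $H_l < y_0 \le H_{l-1}$, and the $k<l$ versus $k>l$ dichotomy is immediate.
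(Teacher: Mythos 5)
Your proposal is correct and follows essentially the same route as the paper's (very terse) proof: the paper simply asserts that each horizontal rectangle ends at one rising and one falling vertical line, with the rising one on the left when the rectangle lies above the x-axis, which is exactly the content of your inequalities $H_{k-1}\le c<d\le H_k$ and $H_l\le c<d\le H_{l-1}$. Your version just makes the "blocking" bookkeeping explicit, which the paper leaves to the reader.
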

\begin{proof}
    Each horizontal rectangle ends at a rising vertical lines and a falling one, thus the edge $e$ connects a black vertex and a white one.
    For horizontal rectangle above the x-axis, the rising vertical line is at the left of the falling one, thus $s_k$ appears earlier in $\perm$ and $k < l$. 
\end{proof}

\medskip
\begin{proposition}\label{prop:edge_criterion}
    For $1 \leq k < l \leq N$, there is an edge connecting vertices $k, l$ if and only if 
        \[ \min\left\{ H_k, \cdots, H_{l-1} \right\} > \max \left\{ H_{k-1}, H_{l}, 0 \right\} \  \]
    or
        \[ \max\left\{ H_k, \cdots, H_{l-1} \right\} < \min \left\{ H_{k-1}, H_{l}, 0 \right\} \ . \]
    Moreover, the edge lies above or below the x-axis according to whether the first or second condition holds, and in either case its weight is the absolute value of the difference between the two sides of the inequality.
\end{proposition}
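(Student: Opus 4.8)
The plan is to pin down each horizontal rectangle of the subdivision of $R(\perm)$ exactly, in terms of the cumulative sums $H_0,\dots,H_N$, and then read the criterion off. Call the boundary vertical line $\{i\}\times\bigl[\min\{H_{i-1},H_i\},\,\max\{H_{i-1},H_i\}\bigr]$ the \emph{$i$-th post}, and recall that the column $Rec_i$ (Construction~\ref{cnstr:combing_step1}) occupies the heights between $0$ and $H_i$ over $x\in[i,i+1]$. Since every cut is horizontal, a piece of the subdivision (in the sense of Construction~\ref{cnstr:combing_step2}) is a rectangle $[k,l]\times[b,c]$ with $1\le k<l\le N$ whose two vertical sides lie in the $k$-th and $l$-th posts and whose interior is disjoint from all cuts; as the $x$-axis is itself a cut, the piece lies entirely on one side of it, and by Lemma~\ref{lem:edge_above_below} it lies above (so $0\le b$) exactly when it joins the black vertex $k$ to the white vertex $l$. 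I would therefore first characterize the above-axis pieces and then deduce the below-axis case by reflecting in the $x$-axis.

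The heart of the argument is the claim: a rectangle $[k,l]\times[b,c]$ with $0\le b<c$ is an above-axis piece if and only if the first inequality in the statement holds, in which case necessarily $c=\min\{H_k,\dots,H_{l-1}\}$ and $b=\max\{H_{k-1},H_l,0\}$, so the resulting edge joins $k$ and $l$ with weight $c-b$. For the ``only if'' direction I would argue as follows. Containment in $R(\perm)$ forces $H_i\ge c$ for $k\le i\le l-1$. Maximality forces $H_{k-1}\le b$ and $H_l\le b$: if, say, $H_{k-1}$ lay in $(b,c)$, the extended top of column $k-1$ would run through the columns $k,\dots,l-1$ --- all of height $\ge c$ --- and hence all the way across $[k,l]$, giving a cut through the interior of the rectangle; whereas if $H_{k-1}\ge c$, then column $k-1$ also contains the heights $[b,c]$ and the rectangle could be enlarged into it. Finally, the resulting inequalities $c\le\min\{H_k,\dots,H_{l-1}\}$ and $b\ge\max\{H_{k-1},H_l,0\}$ must be equalities, because just above height $c$, or just below height $b$, the whole width $[k,l]$ is still present in $R(\perm)$ while --- by the argument indicated below --- no cut spans it there, so the rectangle would extend, contradicting maximality. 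For the ``if'' direction: assuming the first inequality, the rectangle with $c=\min\{H_k,\dots,H_{l-1}\}$ and $b=\max\{H_{k-1},H_l,0\}$ lies in $R(\perm)$ with sides on the $k$-th and $l$-th posts, is capped above by a cut (the extended top of a minimizing column) and below by a cut (the extended top of whichever of columns $k-1,l$ realizes the maximum, or else the $x$-axis), and meets no cut in its interior; hence it is a piece, and its weight is its height $c-b$, the difference of the two sides of the inequality.

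Next I would handle the below-axis pieces by symmetry. Reflecting $R(\perm)$ in the $x$-axis replaces each $H_i$ by $-H_i$, carries above-axis pieces to below-axis pieces without changing which vertices an edge joins or its weight, and turns the first inequality in the statement into the second; so the below-axis pieces are exactly the $[k,l]\times[b,c]$ with $b=\max\{H_k,\dots,H_{l-1}\}$, $c=\min\{H_{k-1},H_l,0\}$ and the second inequality holding, again of weight $c-b$. Combining the two cases: there is an edge joining $k$ and $l$ if and only if the first or the second inequality holds; the two cannot hold simultaneously, as the first forces $H_k,\dots,H_{l-1}>0$ and the second forces them $<0$; and in each case the above/below alternative and the weight $|c-b|$ are exactly as asserted.

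I expect the main obstacle to be the step left implicit above: showing that at no height strictly between $b$ and $c$ is there a cut reaching all the way across the interval $[k,l]$. This is precisely where the rule ``extend a horizontal line until it meets a boundary vertical line'' has to be used carefully --- such a cut would be the extended top of some column, which is either one of $k,\dots,l-1$ (its top is $\ge c$, above the height in question), one of $k-1,l$ (its top is $\le b$, below it), or a column outside $[k-1,l]$, and in the last case the cut would first have to traverse the too-short column $k-1$ or $l$, which it cannot. Some care is also needed with ties among $H_{k-1}$, $H_l$ and $0$, and with the boundary column $l=N$, at which the $x$-axis cut terminates.
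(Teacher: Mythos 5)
Your proposal is correct and follows essentially the same route as the paper's proof: identify the horizontal rectangle between the $k$-th and $l$-th posts, observe that its top is the lowest cut spanning columns $k,\dots,l-1$ (height $\min\{H_k,\dots,H_{l-1}\}$) and its bottom the highest of the $(k-1)$-th line, the $l$-th line and the $x$-axis (height $\max\{H_{k-1},H_l,0\}$), and read off the criterion and the weight. Your version is more careful about maximality, ties, and why no cut crosses the interior, and handles the below-axis case by reflection rather than by the paper's ``the other case is similar,'' but these are elaborations of the same argument.
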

\begin{proof}
    We prove only the case where vertex $k$ is black. 
    If there is an edge connecting $k, l$, then by Lemma \ref{lem:edge_above_below}, there exists a horizontal rectangle $e$ attached to the right of $k$-th rising vertical line and the left of $l$-th falling vertical line. 
    By step (2) of Construction \ref{cnstr:combing_step2}, the upper horizontal edge of $e$ must come from the lowest horizontal line of $Rec_k, \cdots, Rec_{l-1}$, whose height is exactly $\min\{H_j\}_{j=k}^{l-1}$. The lower horizontal edge of $e$ must come from the highest one among the $(k-1)$-th and $l$-th horizontal line and the x-axis. This height is just $\max \left\{ H_{k-1}, H_{l}, 0 \right\}$. By step (3) of Construction \ref{cnstr:combing_step2}, the weight on this edge is the difference of these two height. 
    The converse is also proved by such discussion on the upper and lower horizontal edge of a rectangle. 
\end{proof}

\begin{corollary}\label{cor:rec_length_strict}
    For all horizontal rectangles attached to the right (resp.\ left) of the $i$-th vertical line, their lengths strictly decrease (resp.\ increase) with increasing vertical position, and vise versa.
\end{corollary}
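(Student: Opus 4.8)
The plan is to read off, via Proposition~\ref{prop:edge_criterion}, the vertical extent of each horizontal rectangle and to compare these extents along a fixed vertical line. Fix a vertex $i$ and consider the horizontal rectangles having a vertical side on the $i$-th vertical line. Each of them is a cell of the subdivision of $R(\perm)$ into horizontal rectangles produced in Construction~\ref{cnstr:combing_step2}, so distinct rectangles have disjoint interiors and therefore cut out pairwise non-overlapping subsegments of the $i$-th vertical line. Moreover each such rectangle has strictly positive height: by Proposition~\ref{prop:edge_criterion} its weight is the absolute value of the difference of the two sides of a \emph{strict} inequality. Hence the rectangles at $i$ are genuinely stacked bottom-to-top along the $i$-th vertical line, ``above'' and ``below'' are unambiguous, and the task is, on each side of the line separately, to show that the rectangle lengths are strictly monotone in the vertical coordinate, in the direction asserted.

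First I would take $i$ black, so the $i$-th vertical line rises. By Lemma~\ref{lem:edge_above_below} a rectangle attached to the right of it lies above the $x$-axis and joins $i$ to a white vertex $l>i$; Proposition~\ref{prop:edge_criterion} gives its vertical extent as $\bigl[\max\{H_{i-1},H_l,0\},\ \min\{H_i,\dots,H_{l-1}\}\bigr]$ and its length is $l-i$. If $i<l<l'$ both occur, then $\min\{H_i,\dots,H_{l'-1}\}\le\min\{H_i,\dots,H_{l-1}\}$ because the first index set contains the second; combined with the non-overlap and the strict positivity of heights, this forces the $\{i,l'\}$-rectangle strictly below the $\{i,l\}$-rectangle. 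So, reading upward, the far endpoint $l$ strictly decreases and hence so does the length $l-i$. The mirror computation handles a rectangle attached to the left of a black $i$: it lies below the $x$-axis, joins $i$ to a white vertex $k<i$, has extent $\bigl[\max\{H_k,\dots,H_{i-1}\},\ \min\{H_{k-1},H_i,0\}\bigr]$ and length $i-k$, and for $k<k'<i$ the inequality $\max\{H_k,\dots,H_{i-1}\}\ge\max\{H_{k'},\dots,H_{i-1}\}$ puts the $\{k,i\}$-rectangle strictly above the $\{k',i\}$-rectangle, so the length increases upward. This is exactly the stated behaviour when the $i$-th vertical line rises.

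For a white $i$ (falling line) I would invoke the symmetry $\wt\leftrightarrow-\wt$. Replacing $\wt$ by $-\wt$ again yields a passport in the sense of Definition~\ref{def:passport}; geometrically it is the reflection $(x,y)\mapsto(x,-y)$, which carries $R(\perm)$ onto the region of the same permutation for the negated weights, interchanges black and white vertices, fixes every vertical line, and reverses the vertical order of the horizontal rectangles. Transporting the black case through this reflection shows that for a white $i$ the lengths of the right (resp.\ left) rectangles strictly increase (resp.\ decrease) with increasing vertical position --- precisely the ``vice versa'' alternative. (Equivalently one may simply rerun the Proposition~\ref{prop:edge_criterion} computation in the two remaining colour/side combinations, which are completely analogous to the two above.)

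I do not expect a genuine obstacle here: the statement is essentially a repackaging of Proposition~\ref{prop:edge_criterion}. The two points that need care are the degenerate-interval check inside the stacking argument --- this is exactly where strict positivity of the edge weights is used, and without it the monotonicity would only be weak --- and the bookkeeping of the four colour/side cases, making sure in each that monotonicity of the far endpoint of the edge is translated correctly into monotonicity of the rectangle's length and into the claimed direction of ``increasing vertical position''.
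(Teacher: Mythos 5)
Your argument is correct and is essentially the paper's own proof: both read off the vertical extent of each rectangle from Proposition~\ref{prop:edge_criterion} and exploit the monotonicity of $\min\{H_i,\dots,H_{l-1}\}$ (resp.\ $\max$) over nested index sets to order the rectangles vertically, the only cosmetic differences being that you invoke disjointness of the rectangle interiors where the paper writes out the full inequality chain, and that you dispatch the white-vertex case by the reflection $(x,y)\mapsto(x,-y)$ where the paper simply notes the argument is similar.
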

\begin{proof}
    We prove only the case where $\wt(s_i)>0$. Assume that $e_1, e_2$ are two horizontal rectangles attached to the right of $i$-th vertical line, ending at $j$-th and $k$-th vertical line respectively. By Lemma \ref{lem:edge_above_below} they lie above the x-axis. 
    From the proof of Proposition \ref{prop:edge_criterion}, the height of lower horizontal edge of $e_1$ is $\max\{H_{i-1}, H_j, 0\}$, and the height of upper edge of $e_2$ is $\min\{H_i, \cdots H_{k-1}\}$. 
    
    If $e_1$ is higher than $e_2$, then we have
    \[ \min\{H_i,\cdots H_{k-1}\} \leq \max\{H_{i-1},H_j,0\} < \min\{H_i,\cdots H_{j-1}\} . \]
    Hence $j<k$. 
    Conversely, if $j<k$, then $\min\{H_i,\cdots H_{k-1}\} \leq \min\{H_i,\cdots H_j\}$. The existence of $e_1$ implies $\min\{H_i,\cdots H_{j-1}, H_j\} = H_j$, by Proposition \ref{prop:edge_criterion}. 
    Then 
    \[ \max\{H_{i-1}, H_j, 0\} \geq H_j = \min\{H_i,\cdots H_j\} \geq \min\{H_i, \cdots H_{k-1}\}.  \]
    
    For rectangles attached to the left, proofs are similar.
\end{proof}

\medskip
\begin{corollary}\label{cor:rec_separating}
    If there is an edge $e$ between vertices $k, l$ with $1 \leq k < l \leq N$, then there is no edge connecting $\{{k+1}, \cdots, {l-1}\}$ to $V\setminus\{k,\cdots,l\}$.  
\end{corollary}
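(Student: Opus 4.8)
The plan is to reduce the claim to an incompatibility between two instances of the edge criterion of Proposition~\ref{prop:edge_criterion}, working entirely with the cumulative sums $H_0,\dots,H_N$ (recall $H_0=H_N=0$). Geometrically the statement says that the rectangle $e$ caps off the sub-histogram formed by the columns $Rec_k,\dots,Rec_{l-1}$, each of which rises above $e$, so no other rectangle can straddle the vertical line separating these columns from the rest; but the inequalities are cleaner to manipulate than this picture, so I will argue directly from Proposition~\ref{prop:edge_criterion}.

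Since there is an edge between $k$ and $l$ with $k<l$, Proposition~\ref{prop:edge_criterion} offers two alternatives; I will run the argument assuming the first one, namely
\[
    \min\{H_k,\dots,H_{l-1}\}\;>\;\max\{H_{k-1},H_l,0\},
\]
and note that the second alternative is handled verbatim after the substitution $H_i\mapsto -H_i$, which interchanges the two alternatives in Proposition~\ref{prop:edge_criterion} and leaves the statement to be proved untouched. From the displayed inequality I extract the only facts needed: every $H_j$ with $k\le j\le l-1$ is strictly positive and strictly exceeds each of $H_{k-1}$ and $H_l$. Call such indices \emph{high} and the indices $k-1$, $l$ (together with the formal index $0$) \emph{low}, so that every high value lies strictly above every low value, high values are positive, and low values are $\le 0$.

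Now suppose, for contradiction, that some edge joins a vertex $m$ with $k<m<l$ to a vertex $m'\notin\{k,k+1,\dots,l\}$, and apply Proposition~\ref{prop:edge_criterion} to the pair $\{m,m'\}$, splitting on $m'<k$ and $m'>l$. The key is an elementary index count. If $m'<k$, the index interval $\{m',\dots,m-1\}$ contains the low index $k-1$ and the high index $k$, while $H_m$, which is high, sits on the $\{H_{m'-1},H_m,0\}$ side of the criterion; if $m'>l$, the index interval $\{m,\dots,m'-1\}$ contains the low index $l$ and the high index $l-1$, while $H_{m-1}$, which is high (since $m-1\in\{k,\dots,l-1\}$), sits on the $\{H_{m-1},H_{m'},0\}$ side. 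In either case the ``$\min>\max$'' branch of Proposition~\ref{prop:edge_criterion} would force the high value $H_m$ (resp.\ $H_{m-1}$) to lie strictly below the low value $H_{k-1}$ (resp.\ $H_l$), while the ``$\max<\min$'' branch would force the positive high value $H_k$ (resp.\ $H_{l-1}$) to be negative; both are impossible. This contradiction finishes the proof.

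I do not expect a genuine obstacle: once the bookkeeping is arranged the computation is only a few lines. The points deserving care are verifying that the index intervals above are nonempty (they are, since $m'\le k-1\le m-1$ in the first case and $m\le l-1\le m'-1$ in the second), handling the conventions $H_0=H_N=0$ in the criterion when $m'=1$ or $m'=N$, and making precise the reduction to the above-axis case via $H_i\mapsto -H_i$.
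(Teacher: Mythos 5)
Your proof is correct and follows essentially the same route as the paper: assume the first alternative of Proposition \ref{prop:edge_criterion} for the edge $e$, then show both alternatives fail for any hypothetical edge from $\{k+1,\dots,l-1\}$ to the exterior, using that the index interval swept by such an edge must contain both a ``high'' index from $\{k,\dots,l-1\}$ and a ``low'' index from $\{k-1,l\}$. One small slip in your setup: the hypothesis does not give ``low values are $\le 0$'' (it only gives that they lie strictly below every high value, which in turn is positive; e.g.\ $H_{k-1}$ can be positive), but your two contradictions never use that claim, so the argument stands.
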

\begin{proof}
    We prove only the case where $\wt(s_k)>0$. By Proposition \ref{prop:edge_criterion},
    \[ \min\left\{ H_k, \cdots, H_{l-1} \right\} > \max \left\{ H_{k-1}, H_{l}, 0 \right\} .  \]
    Then $H_k, H_{l-1}>0$. For any $i\in\{k+1,\cdots,l-1\}$, we have 
    \[ H_{k-1} < H_i ,\ H_l < H_{i-1} . \]

    For any $j\in\{1,\cdots,k-1\}$, $j \leq k-1 < k \leq i-1$. Then 
    \[ \min\{H_j, \cdots, H_{i-1} \} \leq H_{k-1} < H_i \leq \max\{H_{j-1},H_i,0\}  \]
    and 
    \[ \max\{H_j, \cdots, H_{i-1} \} \geq H_{k} > 0 \geq \min\{H_{j-1},H_i,0\} .\]
    For $j\in\{l+1,\cdots,N\}$, the proof is similar.
\end{proof}

\medskip
\begin{proposition}\label{prop:GP_forest}
    $(V, E, W_E)$ is a simple graph without closed path. Hence $G(\perm)$ is an LWBP-forest in $\Fore(\ppf)$.
\end{proposition}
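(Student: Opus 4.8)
The plan is to establish three things in sequence: (1) $G(\perm)$ has no multiple edges, (2) $G(\perm)$ has no loops, and (3) $G(\perm)$ contains no cycle. The loop-freeness is immediate: by Lemma \ref{lem:edge_above_below} every edge joins a black vertex to a white vertex, so no edge can have both endpoints equal. The bi-colored condition required for membership in $\Fore(\ppf)$ is handled by the same lemma, and the fact that $\wt_V(i) = \pm\absbig{\wt(s_i)}$ matches the passport data was already noted after Construction \ref{cnstr:combing_step2}; so once we know $G(\perm)$ is a forest, it lies in $\Fore(\ppf)$ automatically.

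For simplicity (no double edges), suppose two distinct horizontal rectangles $e, e'$ both connect vertices $k$ and $l$ with $k < l$. By Lemma \ref{lem:edge_above_below} both lie on the same side of the x-axis, say both above; assume $e'$ is the higher one. Then the lower horizontal edge of $e'$ sits strictly above the lower horizontal edge of $e$, hence strictly above the upper horizontal edge of $e$ is impossible — more precisely, the upper edge of $e'$ has height $\min\{H_k,\dots,H_{l-1}\}$ and the lower edge of $e'$ has height $\ge \max\{H_{k-1},H_l,0\}$, which by Proposition \ref{prop:edge_criterion} applied to $e$ already equals the upper edge height of $e$; so $e'$ would have to lie in the horizontal strip occupied by $e$, contradicting that the horizontal cuts of Construction \ref{cnstr:combing_step2} partition $R(\perm)$ into rectangles with disjoint interiors. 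I would phrase this cleanly using Corollary \ref{cor:rec_length_strict}: two rectangles on the same side of the $k$-th vertical line ending at the same vertical line would have equal length, forcing them to have the same vertical position, hence be identical.

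The heart of the argument — and the step I expect to be the main obstacle — is acyclicity. The key tool is Corollary \ref{cor:rec_separating}: an edge $e$ between $k < l$ separates the vertex set, in the sense that no edge joins $\{k+1,\dots,l-1\}$ to its complement in $[N]\setminus\{k,\dots,l\}$. I would argue by contradiction: take a cycle $C$ and, among all edges appearing in $C$, pick one, say $e$ joining $k<l$, for which the "span" $l-k$ is minimal. Walking along $C$ starting from $e$, the rest of the cycle is a path from $k$ to $l$ not using $e$; this path must contain at least one vertex in $\{k+1,\dots,l-1\}$ (if it went directly to some vertex outside $\{k,\dots,l\}$ it would contradict the separating property) — and once it enters the open interval $\{k+1,\dots,l-1\}$, Corollary \ref{cor:rec_separating} forbids it from leaving $\{k,\dots,l\}$ again, so the entire path stays within $\{k,\dots,l\}$. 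But then that path contains an edge whose two endpoints both lie in $\{k,\dots,l\}$, and since the endpoints are distinct and the path is a genuine detour, one checks this edge has span strictly smaller than $l-k$, contradicting minimality. (A small case check is needed when the interval $\{k+1,\dots,l-1\}$ is empty, i.e. $l = k+1$: then there is no room for a separate $k$–$l$ path at all, since any such path would need an intermediate vertex outside $\{k,l\}$, contradicting separation directly.) I would write this induction/minimal-counterexample carefully, as the bookkeeping about which endpoints lie in which interval is where errors creep in. Once acyclicity is in hand, $G(\perm)$ is a simple bi-colored graph with no cycles, i.e. a forest, and the ribbon structure from Construction \ref{cnstr:combing_step2} together with Proposition \ref{prop:eq_ribbon} realizes it as an LWBP-forest; the weight and labeling conditions of Definition \ref{def:labeled_tree} hold by construction, so $G(\perm) \in \Fore(\ppf)$.
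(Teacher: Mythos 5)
Your treatment of loops and multiple edges is fine and matches the paper's (both rest on Lemma \ref{lem:edge_above_below} and Corollary \ref{cor:rec_length_strict}). The problem is the acyclicity step, which is the heart of the proposition, and there your argument has a genuine gap: Corollary \ref{cor:rec_separating} only forbids edges from the \emph{open} interval $\{k+1,\dots,l-1\}$ to the outside of $\{k,\dots,l\}$; it says nothing about edges incident to the endpoints $k$ and $l$ themselves. So your claim that the complementary $k$--$l$ path ``must contain at least one vertex in $\{k+1,\dots,l-1\}$'' is unjustified -- the path may leave through $k$ directly to a vertex outside $\{k,\dots,l\}$ and re-enter at $l$, never touching the open interval, without violating separation. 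Your ``small case check'' for $l=k+1$ fails for the same reason: when the open interval is empty the separation statement is vacuous, so a detour through outside vertices contradicts nothing. Concretely, the abstract graph on $[4]$ with edges $\{1,2\},\{2,3\},\{3,4\},\{1,4\}$ is a $4$-cycle that satisfies the separation property for every one of its edges (each open interval is either empty or has empty complement), so no argument using only Corollary \ref{cor:rec_separating} and minimality of span can rule out cycles.

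The paper's proof needs strictly more geometric input. It takes the vertex $i$ of \emph{smallest index} on the cycle, looks at the two rectangles at the $i$-th vertical line carrying the two cycle edges through $i$, and uses Lemma \ref{lem:edge_above_below} (which side of the x-axis an edge lies on, as a function of the order of its endpoints) together with Corollary \ref{cor:rec_length_strict} (lengths of stacked rectangles are strictly monotone in height) to pin down the interleaving $i<l<j<k$ of the four vertices on three consecutive cycle edges. Only after that interleaving is established does Corollary \ref{cor:rec_separating}, applied to the middle edge $e_0$ between $i$ and $j$, separate $l$ from $k$ and kill the remaining arc of the cycle. To repair your proof you would need to import this kind of information about the relative positions of the rectangles at a common vertex; minimality of span plus separation alone is not enough.
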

\begin{proof}
    $G(\perm)$ contains no loop at any vertex by definition. It contains no multi-edge because two horizontal rectangles attached to the same vertical line on the same side must have different length, by Corollary \ref{cor:rec_length_strict}. 
    
    Assume that $G(\perm)$ contains a closed path which visits each vertex at most twice. Let $i$ be the vertex with smallest index on this closed path. 
    
    First assume $i$ is black. Among two horizontal rectangles attached to the right of $i$-th vertical line and corresponding to the two edges on the path, let $e_0$ be the one whose horizontal position is higher. Let $j$ be the other vertex of $e_0$. By Lemma \ref{lem:edge_above_below}, $j>i$.  
    
    Consider the two horizontal rectangles $e_{-1}, e_{+1}$ corresponding to the adjacent edges of $e_0$ in the closed path. Let $i,k$ be the vertices of $e_{-1}$, and $j, l$ be the vertices of $e_{+1}$. Then $j, k$ are white and $l$ is black. By Lemma \ref{lem:edge_above_below}, both $e_0, e_{-1}$ lie above the x-axis. Due to the choice of $e_0$, we see that $e_{-1}$ is also attached to the right of the $i$-th vertical line and lower than $e_0$. By Corollary \ref{cor:rec_length_strict}, $e_{-1}$ is strictly longer than $e_0$, hence $k>j$. Then applying Proposition \ref{prop:edge_criterion} to $e_0, e_{-1}$, we see $H_{j-1} > H_{j} > H_{i-1}$. 
    
    Now consider vertex $l$ of $e_{+1}$. 
    Since $i$ is assumed to be the smallest index on this closed path and is already visited twice, $l > i$. The horizontal rectangle $e_{-1}$ lies above x-axis, so $e_{+1}$ is also above x-axis and $l < j$ by Lemma \ref{lem:edge_above_below}. 
    Therefore, $i<l<j<k$ (see Figure \ref{fig:no_closed_path}).   
    Deleting edges $e_{-1}, e_0, e_{+1}$ in the closed path, one gets a path from $l$ to $k$ without passing $i,j$. 
    However, applying Corollary \ref{cor:rec_separating} to $e_0$, there is no edge from $\{{i+1}, \cdots, {j-1}\}$ to $V \setminus \{i, \cdots, j\}$. Since $i<l<j<k$, such path from $l$ to $k$ can not exist. 

    If $i$ is white, let $e_0$ be the one whose horizontal position is lower. The arguments are similar. 
\end{proof}

\begin{figure}[ht]\centering
\begin{subfigure}{0.45\textwidth} \centering
    \begin{tikzpicture}[scale=0.5, transform shape, every node/.style={font=\LARGE}]
        \draw[line width=2pt, dashed] (-1.3,0) -- (10,0);
        \filldraw[opacity=0.6, lightgray] (0,1) rectangle (9,2);
        \filldraw[opacity=0.6, lightgray] (0,3) rectangle (6,4);
        \filldraw[opacity=0.6, lightgray] (4,5) rectangle (6,6);
        \draw (0,1) edge (0,4) edge[thick, dotted] (0,0);
        \draw (4,5) edge (4,6) edge[thick, dotted] (4,0); 
        \draw (6,3) edge (6,6) edge[thick, dotted] (6,0);
        \draw (9,1) edge (9,2) edge[thick, dotted] (9,0);
        \draw[line width=3pt] (0,1) -- (9,1); 
        \draw[line width=3pt] (0,2) -- (9,2);
        \draw[line width=3pt] (0,3) -- (6,3);
        \draw[line width=3pt] (0,4) -- (6,4);
        \draw[line width=3pt] (4,5) -- (6,5);
        \draw[line width=3pt] (4,6) -- (6,6);    
        \node at (3, 3.5) {$e_0$};
        \node at (5, 1.5) {$e_{-1}$};
        \node at (5, 5.5) {$e_{+1}$};
        \node at (0, -0.5) {$i$};
        \node at (6, -0.5) {$j$};
        \node at (9, -0.5) {$k$};
        \node at (4, -0.5) {$l$};
    \end{tikzpicture}
\end{subfigure}
\hfill
\begin{subfigure}{0.45\textwidth} \centering
    \begin{tikzpicture}[scale=0.53, transform shape, every node/.style={font=\LARGE}]
        \draw[thick] (0, 2) -- node[pos=0.5, above=2pt] {$e_0$} (6, 3);
        \draw[thick] (0, 2) -- node[pos=0.6, above=2pt] {$e_{-1}$} (9, 0);
        \draw[thick] (4, 5) -- node[pos=0.7, above=6pt] {$e_{+1}$} (6, 3);
        \node[blk]() at (0,2) {$v_i$}; 
        \node[blk]() at (4,5) {$v_l$}; 
        \node[wht]() at (6,3) {$v_j$}; 
        \node[wht]() at (9,0) {$v_k$}; 
    \end{tikzpicture}
    \vspace{17pt}
\end{subfigure}
\caption{Three consecutive edges on the closed path.}
\label{fig:no_closed_path}
\end{figure}
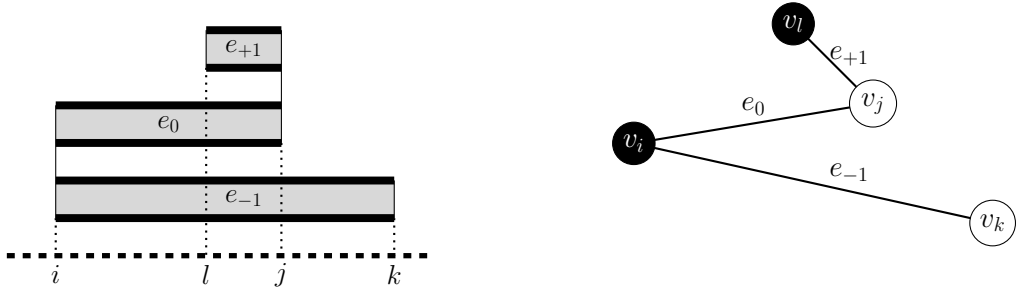

\begin{definition}\label{def:comb}
    Let $\FM(\ppf), \TM(\ppf)$ be the set of twice-marked forests and trees of passport $\ppf$ in Definition \ref{def:twice_marked_tree}. Then the mapping 
    \begin{align}\label{eq:combing}\begin{split}
        \comb: \quad \PP(\ppf) \quad & \longrightarrow \qquad \FF M(\ppf) \\
        \perm=(s_i)_{i=1}^N &\longmapsto \big( G(\perm); s_1, s_N \big)
    \end{split}\end{align}
    is called the \textbf{combing map}, where $G(\perm)$ is defined by Construction \ref{cnstr:combing_step1} and \ref{cnstr:combing_step2}. 
\end{definition}

\bigskip
\subsection{Tree permutation}\label{ssec:tree_perm}
Those $\perm\in\PP(\ppf)$ with connected $G(\perm)$ are fully characterized by the following conditions. The structure of general disconnected $G(\perm)$ will be partially given in Section \ref{ssec:pos_perm} later. 

\begin{proposition}\label{prop:comb_PT_tree}
    $G(\perm)$ is a tree if and only if all the followings hold.
    \begin{enumerate}
        \item $H_i \neq 0$ for all $i \in [N-1]$.
        \item If $H_{k-1}=H_l>0$ for some $k<l\in[N]$, then 
        \[ \min\{H_k,\cdots H_{l-1}\} < H_{k-1} = H_l . \]
        \item If $H_{k-1}=H_l<0$ for some $k<l\in[N]$, then 
        \[ \max\{H_k,\cdots H_{l-1}\} > H_{k-1} = H_l . \]
    \end{enumerate}
\end{proposition}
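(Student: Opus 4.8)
\medskip\noindent\textbf{Proof proposal.}\
The plan rests on Proposition~\ref{prop:GP_forest}: since $G(\perm)$ is always an LWBP-forest, ``$G(\perm)$ is a tree'' is equivalent to ``$G(\perm)$ is connected'', i.e.\ to ``no nonempty proper subset of $[N]$ is a union of connected components of $G(\perm)$''. (Note that $G(\perm)$ has no isolated vertex: a vertex $v$ with no incident edge would have $\wt_V(v)=0$, contradicting $\wt_V(v)=\pm|\wt(s_v)|\neq 0$; hence every component carries at least one edge.) I prove ``tree $\Longleftrightarrow$ (1)--(3)'' by establishing each implication in contrapositive form, using Proposition~\ref{prop:edge_criterion} and Corollary~\ref{cor:rec_separating}.

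\emph{Necessity.} Suppose one of (1), (2), (3) fails. If (1) fails, say $H_{i_0}=0$ with $1\leq i_0\leq N-1$: by Proposition~\ref{prop:edge_criterion}, an edge $\{k,l\}$ ($k<l$) with $k\leq i_0<l$ would force every $H_j$, $k\leq j\leq l-1$ --- in particular $H_{i_0}$ --- to be strictly positive or strictly negative; so no edge joins $\{1,\cdots,i_0\}$ to $\{i_0+1,\cdots,N\}$, and $G(\perm)$ is disconnected. If (2) fails, fix $k<l$ with $H_{k-1}=H_l=h>0$ and $H_j\geq h$ for all $k\leq j\leq l-1$; note $k\geq 2$ because $H_0=0$. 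I claim $\{k,k+1,\cdots,l\}$ has no edge to its complement: for an edge $\{k',l'\}$ ($k'<l'$) with exactly one endpoint in $\{k,\cdots,l\}$, either $k\leq k'\leq l<l'$ or $k'<k\leq l'\leq l$; in the first case $l\in\{k',\cdots,l'-1\}$ and $H_{k'-1}\geq h$, in the second $k-1\in\{k',\cdots,l'-1\}$ and $H_{l'}\geq h$, so in Proposition~\ref{prop:edge_criterion} one has $\min\{H_{k'},\cdots,H_{l'-1}\}\leq h\leq\max\{H_{k'-1},H_{l'},0\}$, which violates the required strict inequality, and the ``below the $x$-axis'' alternative is impossible because some $H_j$ in that range equals $h>0$. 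Hence $\{k,\cdots,l\}$, which is proper since $k\geq 2$, is a union of components, so $G(\perm)$ is disconnected. Failure of (3) is handled symmetrically with $h<0$.

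\emph{Sufficiency.} Assume (1)--(3) and, for contradiction, that $G(\perm)$ is disconnected. Choose a component $C$ minimizing $\max C-\min C$ and put $a:=\min C$, $b:=\max C$; then $a<b$, since $G(\perm)$ has no isolated vertex. \emph{Step 1 ($C$ is an interval):} if some $c$ with $a<c<b$ lay outside $C$, a path in $C$ from $a$ to $b$ would contain an edge $\{k,l\}$ with $a\leq k<c<l\leq b$, and Corollary~\ref{cor:rec_separating} would trap the component of $c$ inside $\{k+1,\cdots,l-1\}$, producing a component $C'$ with $\max C'-\min C'\leq l-k-2<b-a$, against minimality; so $C=\{a,a+1,\cdots,b\}$. \emph{Step 2 ($2\leq a$ and $b\leq N-1$):} if $a=1$, then $b\leq N-1$ (properness), $Rec_b$ is non-degenerate by (1), and it is sliced into horizontal rectangles, each an edge with left endpoint $\leq b$ (in $C$) and right endpoint $\geq b+1$ (outside $C$) --- impossible for a component; symmetrically $b\neq N$. \emph{Step 3 (the clash):} assume $H_{a-1}>0$ (if $H_{a-1}<0$ the symmetric argument clashes with (3)). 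The horizontal cuts slice $Rec_{a-1}=[a-1,a]\times[0,H_{a-1}]$ into horizontal rectangles; each is an edge $\{k,l\}$ with $k\leq a-1$, $l\geq a$, and since no edge leaves $C=\{a,\cdots,b\}$ we must have $l\geq b+1$, so this rectangle occupies every column over $[a-1,b]$ and therefore lies at heights $\leq\min\{H_j:a-1\leq j\leq b\}$ (which also forces each such $H_j>0$). As these rectangles tile $[0,H_{a-1}]$, we get $H_{a-1}\leq\min\{H_j:a-1\leq j\leq b\}$, hence $H_j\geq H_{a-1}$ for all $a-1\leq j\leq b$. The mirror argument applied to $Rec_b=[b,b+1]\times[0,H_b]$ --- whose horizontal rectangles, again because $C$ has no leaving edge, are edges $\{k,l\}$ with $k\leq a-1$, $l\geq b+1$ --- gives $H_b\leq\min\{H_j:a-1\leq j\leq b\}=H_{a-1}$. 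So $H_{a-1}=H_b>0$ and $\min\{H_a,\cdots,H_{b-1}\}\geq H_{a-1}=H_b$, which is exactly the negation of condition (2) at $(k,l)=(a,b)$. This contradiction shows $G(\perm)$ is connected, hence a tree.

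\emph{Main obstacle.} The delicate part is Step~3 of sufficiency: converting the combinatorial fact ``$C$ is a union of components with no leaving edge'' into the equality $H_{a-1}=H_b$ together with the one-sided bound on the intermediate cumulative sums. The key geometric point --- every horizontal rectangle touching a boundary column of $C$ is forced to stretch clear across $C$ to a vertex beyond $b$, and such rectangles tile a complete vertical segment of height $|H_{a-1}|$ --- has to be made airtight while handling degenerate (height-zero) columns, both signs of $H_{a-1}$, and the bookkeeping of which vertical lines bound a given rectangle. I expect it is cleanest to re-derive the handful of facts needed directly from Proposition~\ref{prop:edge_criterion} and Lemma~\ref{lem:edge_above_below} rather than argue from the picture. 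Necessity, by contrast, is a routine application of Proposition~\ref{prop:edge_criterion} once the separating set $\{k,\cdots,l\}$ has been pinned down.
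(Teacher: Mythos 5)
Your proof is correct, and the harder (sufficiency) direction takes a genuinely different route from the paper's. The paper argues in contrapositive throughout: it takes $i$ to be the first vertex not in the component of $1$ and works purely with inequalities among the cumulative sums, using Proposition~\ref{prop:edge_criterion} to first show $H_{i-1}>0$ (for $i$ black) and then to locate indices $j$ and $k$ witnessing the failure of (2). You instead pick a component $C$ of minimal span, prove via Corollary~\ref{cor:rec_separating} that it is an interval $\{a,\dots,b\}$ with $2\le a$ and $b\le N-1$, and then extract $H_{a-1}=H_b>0$ and $\min\{H_a,\dots,H_{b-1}\}\ge H_{a-1}$ from the way horizontal rectangles tile the boundary columns $Rec_{a-1}$ and $Rec_b$. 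The two geometric facts your Step~3 rests on are both sound consequences of Construction~\ref{cnstr:combing_step2} and the proof of Proposition~\ref{prop:edge_criterion}: a horizontal rectangle lying above the $x$-axis and spanning column $j$ has top at height $\min\{H_{k'},\dots,H_{l'-1}\}\le H_j$, and the rectangles meeting a boundary column stack up to height $|H_{a-1}|$ because they partition $Rec_{a-1}\subset R(\perm)$; if you want to avoid the picture entirely, the inequality $\min\{H_{a-1},\dots,H_b\}\ge\min\{H_{k'},\dots,H_{l'-1}\}>\max\{H_{k'-1},H_{l'},0\}\ge 0$ for the forced long edge $k'\le a-1<b+1\le l'$ does the same job. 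Your version buys an explicit structural statement (a union of components of $G(\perm)$ separated from the rest is an interval of indices) that the paper only obtains implicitly; the paper's version stays entirely arithmetic and never has to argue about how rectangles tile a column. Your necessity argument is essentially the paper's idea — exhibit the separating interval $\{k,\dots,l\}$ — except that you rule out every potential crossing edge directly against Proposition~\ref{prop:edge_criterion}, whereas the paper first normalizes to a strict minimum and also exhibits the edge $k$--$l$ itself.
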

\begin{proof}
    Obviously, if there exists some $i \in [N-1]$ with $H_i=0$, then the region $R(\perm)$ is already disconnected. And there is no edge from $\{1,\cdots, i\}$ to $\{i+1,\cdots,N\}$. We therefore assume that (1) holds. 

    \medskip
    First assume that condition (2) does not hold. Let $k<l\in[N]$ such that 
    \begin{equation}\label{eq:horizontal_pair}
    \min\{H_k,\cdots H_{l-1}\} \geq H_{k-1} = H_l>0.
    \end{equation}
    If there exists some $l' \in \{k, \cdots, l-1\}$ with $H_{l'} = H_{k-1} = H_{l}$, then $l'>k$ since $\wt(s_k)>0$. One can replace $l$ by $l'$ and \eqref{eq:horizontal_pair} still holds. So we can assume $H_k,\cdots, H_{l-1} > H_{k-1}$ in \eqref{eq:horizontal_pair}. By Proposition \ref{prop:edge_criterion}, there is an edge from vertex $k$ to $l$.
    Since $H_{k} > H_{k-1} > 0$, for any $j \in \{1,\cdots,k-1\}$, 
    \begin{align*}
    \min\{H_j,\cdots H_{k-1}\} & \leq H_{k-1} < H_k \leq \max\{H_k, H_{j-1},0\},\ \\ 
    \max\{H_j,\cdots H_{k-1}\} & \geq H_{k-1} > \ 0\ \geq \min\{H_k, H_{j-1},0\} . 
    \end{align*}
    So there is no edge from $\{1,\cdots,k-1\}$ to $k$. 
    By similar arguments, together with Corollary \ref{cor:rec_separating}, there is no edge from $\{k,\cdots,l\}$ to the other vertices. So $G(\perm)$ is disconnected.
    Condition (3) can be check by the same steps. 

    \medskip
    Conversely, if $G(\perm)$ is disconnected while (1) holds, let $i$ be the first vertex not in the component containing $1$ ($i>1$). In other words, there is no edge from $\{1,\cdots, i-1\}$ to $i$. 
    
    When $i$ is black, $H_i>H_{i-1}$. 
    We claim that $H_{i-1}>0$. Otherwise, since $H_0=0$ and $H_{i-1}<0$, there exists $1\leq j \leq i-1$ such that
    $\max\{H_j,\cdots H_{i-1}\} = H_{i-1}$, while $H_{j-1} > H_{i-1}$. 
    Then $\max\{H_j,\cdots H_{i-1}\} = H_{i-1} < \min\{H_{j-1},H_i,0\}$ and there is an edge from $j$ to $i$, contradict to our assumption of $i$. 

    Next we claim that (2) is not satisfied.
    Since $H_0=0$ and $H_{i-1}>0$, there exist $1\leq j \leq i-1$ such that
    $\min\{H_j,\cdots H_{i-1}\} = H_{i-1}$, while $H_{j-1} < H_{i-1}$.
    Since $H_{N}=0$ and $H_i > H_{i-1} >0$, there exist $k\geq i+1$ such that 
    $H_{k} \leq H_{i-1}$ and $\min\{H_i,\cdots H_{k-1}\} > H_{i-1}$. 
    If $H_k < H_{i-1}$, then 
    \[ \min\{H_j,\cdots H_{i-1}, H_i, \cdots H_{k-1}\} = H_{i-1} > \max\{H_{j-1}, H_k,0\} \]
    and 
    \[ \min\{H_i, \cdots H_{k-1}\} > H_{i-1} \geq \max\{H_{i-1}, H_k,0\} .\]
    There are edges between $j,k$ and $i,k$, which contracts to our assumption of $i$ again. So $H_k = H_{i-1}>0$ and $\min\{H_i,\cdots H_{k-1}\} > H_{i-1}$, by the choice of $k$.

    When $i$ is white, similar arguments show that (3) is not satisfied.
\end{proof}

\begin{definition}\label{def:PT}
    $\perm\in\PP(\ppf)$ is called a \textbf{tree permutation} if it satisfies all three conditions in Proposition \ref{prop:comb_PT_tree}.
    The subset of all tree permutations is denoted by $\PT(\ppf)$. Then 
    \[ \comb\big(\PT(\ppf)\big) \subset \TM(\ppf) \subset \FM(\ppf) \ . \]
\end{definition}

\bigskip
\subsection{From tree to permutation}\label{ssec:tree_to_perm}
The combing map is not injective in general, but turns out to be injective when restricted on $\PT(\ppf)$. Its inverse process, called the folding map, is constructed. This leads to our first important bijection.

\begin{theorem}\label{thm:comb_biject}
    The restricted map $\comb : \PT(\ppf) \to \TM(\ppf)$ is a bijection. 
\end{theorem}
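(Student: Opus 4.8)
The plan is to exhibit an explicit inverse, the ``folding map'' $\Fill: \TM(\ppf) \to \PT(\ppf)$, and check that $\comb$ and $\Fill$ are mutually inverse. The reconstruction of a permutation from a twice-marked tree $(T; a, b)$ is forced by the geometry of Construction \ref{cnstr:combing_step1}: in $R(\perm)$, the marked vertex $a = s_1$ must sit on the leftmost vertical line and $b = s_N$ on the rightmost, and the horizontal rectangles record exactly which edges of $T$ pass ``over'' a given vertical line. So first I would describe how to read off the linear order $s_1, \dots, s_N$ on $V$ directly from $(T;a,b)$: roughly, perform a traversal of $T$ starting from $a$, governed at each vertex by the cyclic action $\sigma_v$ (so that the anticlockwise structure of the embedding dictates the order in which subtrees are visited), and terminating at $b$; the order in which vertices are first encountered is the permutation. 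This is the folding map, and it should be phrased via Constructions analogous to \ref{cnstr:combing_step1}, \ref{cnstr:combing_step2} but run in reverse — cutting $T$ open along a spanning structure determined by $a$, $b$, and the $\sigma_v$'s, then laying the pieces out as a histogram.

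Second, I would verify $\Fill(T;a,b)$ actually lies in $\PT(\ppf)$: the cumulative sums $H_i$ never return to $0$ strictly inside the range (condition (1) of Proposition \ref{prop:comb_PT_tree}) precisely because $T$ is connected — a return to $0$ would disconnect $R(\perm)$, and conditions (2),(3) encode that no ``horizontal slab'' of $T$ gets pinched off, again a consequence of connectedness together with how the traversal orders vertices of equal cumulative height. Here Proposition \ref{prop:comb_PT_tree} does most of the work: I only need that the traversal-defined permutation satisfies those three conditions, which I would check by contraposition using the same $H_i$-inequalities that appear in that proof.

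Third, the two round-trips. For $\comb \circ \Fill = \mathrm{id}$: starting from $(T;a,b)$, build $\perm = \Fill(T;a,b)$, form $R(\perm)$ and $G(\perm)$, and show $G(\perm) \cong T$ as LWBP-trees with the marked vertices matching. The vertex sets and weights agree by construction; the edges agree because Proposition \ref{prop:edge_criterion} says an edge of $G(\perm)$ between $k < l$ exists iff a certain $H$-inequality holds, and the traversal was designed so that these are exactly the edges of $T$; finally the cyclic actions agree by step (4) of Construction \ref{cnstr:combing_step2} matched against the $\sigma_v$-governed traversal order, invoking Proposition \ref{prop:eq_ribbon} to conclude isomorphism. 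For $\Fill \circ \comb = \mathrm{id}$: given $\perm \in \PT(\ppf)$, we have $\comb(\perm) = (G(\perm); s_1, s_N)$, and I must show the traversal of $G(\perm)$ from $s_1$ reproduces the order $s_1, \dots, s_N$; this follows because the horizontal position of the $i$-th vertical line in $R(\perm)$ is increasing in $i$, and the traversal visits vertices in order of the vertical line they sit on, which is the content of Corollaries \ref{cor:rec_length_strict} and \ref{cor:rec_separating} (the latter guaranteeing that once the traversal ``enters'' the block $\{k, \dots, l\}$ cut off by an edge, it finishes that block before leaving).

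\textbf{Main obstacle.} The delicate point is defining the folding traversal so that it is manifestly well-defined and manifestly inverse to combing at the same time — in particular handling vertices where the cumulative sum $H_i$ equals an earlier value $H_{k-1}$ (the situation governed by conditions (2),(3)): the traversal must decide, purely from $T$ and the $\sigma_v$'s, in which order to visit vertices sitting at the same height, and one has to check this choice is exactly what combing would have produced. I expect the bulk of the real work to be a careful bookkeeping lemma, in the spirit of Corollary \ref{cor:rec_length_strict}, stating that the left-to-right order of vertical lines in $R(\perm)$ coincides with the order of first visits in the traversal of $G(\perm)$; once that is in hand, both round-trips fall out from Propositions \ref{prop:edge_criterion} and \ref{prop:eq_ribbon}.
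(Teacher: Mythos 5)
Your overall architecture---construct an explicit folding map $\TM(\ppf)\to\PP(\ppf)$, show its image lies in $\PT(\ppf)$, and verify the two round-trips using Proposition \ref{prop:edge_criterion} and the ribbon-graph criterion of Proposition \ref{prop:eq_ribbon}---is exactly the paper's. The genuine gap is the concrete rule you propose for the inverse: the ``order in which vertices are first encountered'' in a $\sigma_v$-governed traversal from $a$ is \emph{not} the permutation that $\comb$ inverts, and the bookkeeping lemma you plan to rest everything on (first-visit order equals left-to-right order of vertical lines in $R(\perm)$) is false. Take the example of Figure \ref{fig:folding}: the path from $a=\ol{3}_2$ to $b=\ol{3}_3$ is $\ol{3}_2$--$3_1$--$\ol{3}_3$, the vertex $\ol{3}_1$ hangs off $3_1$, and $2_2$ is a leaf attached to $\ol{3}_1$. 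Any walk on the tree starting at $a$ must pass through $\ol{3}_1$ before it can reach $2_2$, so \emph{every} first-encounter order places $\ol{3}_1$ before $2_2$; but the permutation is $(\ol{3}_2,\,2_1,\,3_1,\,2_2,\,\ol{3}_1,\,2_3,\,\ol{3}_3)$, with $2_2$ strictly before $\ol{3}_1$. The reason is Lemma \ref{lem:edge_above_below}: for an edge whose rectangle lies above the x-axis, the black endpoint has the \emph{smaller} index, so a pendant black neighbor of a white vertex in an above-axis subtree precedes that white vertex in $\perm$ even though it is farther from $a$ in the tree. No traversal-based first-visit order can produce this.

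What does work---and is what the paper does in Constructions \ref{cnstr:fold1} and \ref{cnstr:fold2}---is the alternative you mention only in passing: lay out the $a$-to-$b$ path as a chain of horizontal rectangles alternating above and below the x-axis according to the colors along the path, attach a rectangle for every remaining edge recursively in the order dictated by $\sigma_v$ (staying on the same side of the x-axis as the rectangle it attaches to, with shrinking widths to keep interiors disjoint), and read the permutation off as the left-to-right order of the resulting boundary vertical segments (Lemma \ref{lem:for_fold}). The x-coordinate at which a vertex's bundle of edge-rectangles terminates is the datum a traversal does not see. If you replace your traversal by this rectangle-laying construction, the rest of your plan goes through essentially as in the paper: the round-trips are checked by matching the horizontal-rectangle decompositions of $\widehat{R}(T;a,b)$ and $R(\perm)$ (your appeal to Propositions \ref{prop:edge_criterion} and \ref{prop:eq_ribbon} and to Corollary \ref{cor:rec_length_strict} is the right toolkit), and membership of $\fold(T;a,b)$ in $\PT(\ppf)$ comes for free from the ``if and only if'' in Proposition \ref{prop:comb_PT_tree} once $G(\fold(T;a,b))=T$ is known, so your separate contrapositive check of conditions (1)--(3) is unnecessary.
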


\begin{corollary}\label{cor:count_nondecomp}
    Theorem \ref{thm:passport_simple} holds for non-decomposable full passport.  
\end{corollary}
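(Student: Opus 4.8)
The plan is to deduce the corollary from the bijection $\comb : \PT(\ppf) \to \TM(\ppf)$ of Theorem \ref{thm:comb_biject}, the key input being that for a non-decomposable full passport \emph{every} permutation is already a tree permutation, i.e.\ $\PT(\ppf) = \PP(\ppf)$. I would prove this identity contrapositively. Suppose some $\perm = (s_1,\cdots,s_N) \in \PP(\ppf)$ violates one of the three conditions of Proposition \ref{prop:comb_PT_tree}; I claim this forces the existence of a nonempty proper subset $S' \subsetneq S$ with $\sum_{s \in S'} \wt(s) = 0$. Given such an $S'$, the complement $S \setminus S'$ also has weight sum $0$ (the total is $0$ by \eqref{eq:passport_weight}), so $\{\pp(S'), \pp(S \setminus S')\}$ is a $2$-partition of $\ppf$ by Definition \ref{def:partition}; hence $\Ptt(\ppf) \neq \{\ptt e\}$ and $\ppf$ is decomposable.

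The subset $S'$ is produced directly from the failed condition. If condition (1) fails, then $H_i = 0$ for some $i \in [N-1]$, and $S' := \{s_1,\cdots,s_i\}$ works. If condition (2) fails, there are $k < l$ with $H_{k-1} = H_l > 0$ and $\min\{H_k,\cdots,H_{l-1}\} \ge H_{k-1}$; since $H_{k-1} > 0 = H_0$ forces $k \ge 2$, the block $S' := \{s_k,\cdots,s_l\}$ is nonempty, proper, and has weight sum $H_l - H_{k-1} = 0$. Condition (3) is symmetric, with the signs and the roles of $\min$ and $\max$ reversed. Thus a non-decomposable $\ppf$ admits no permutation violating (1)--(3), and $\PT(\ppf) = \PP(\ppf)$.

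The count is then immediate. Write $N = |\ppf| = |S| \ge 2$. Since $|\PP(\ppf)| = N!$, Theorem \ref{thm:comb_biject} gives $|\TM(\ppf)| = |\PT(\ppf)| = N!$. On the other hand, the labeling of an LWBP-tree of full passport is a bijection $V \to S = [N]$ preserved by every isomorphism, so a twice-marked tree is exactly a tree $T \in \Tree(\ppf)$ together with an ordered pair of distinct vertices; therefore $|\TM(\ppf)| = N(N-1)\,|\Tree(\ppf)|$. Equating the two counts yields $|\Tree(\ppf)| = N!/(N(N-1)) = (N-2)! = (|\ppf|-2)!$, which is \eqref{eq:enumeration_passport_nondecomposable}. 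To close the loop, I would also verify that \eqref{eq:enumeration_passport_decomposable} specializes to this: when $\ppf$ is non-decomposable, $\Ptt(\ppf) = \{\ptt e\}$, $|\ptt e| = 1$, and $X(\ptt e) = (|S| - 1)! = (N-1)!$, so the right-hand side of \eqref{eq:enumeration_passport_decomposable} is $(-1)^{0}(N-1)^{-1}(N-1)! = (N-2)!$.

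The only step needing care is the case analysis in the second paragraph: in each violated condition one must confirm that the exhibited block is simultaneously nonempty and a proper subset of $S$, which is precisely where the strict signs $H_{k-1} > 0$ or $H_{k-1} < 0$ (against $H_0 = H_N = 0$) are used. Everything else is elementary counting on top of the already-established bijection $\comb$.
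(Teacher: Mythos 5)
Your proposal is correct and follows essentially the same route as the paper: non-decomposability forbids any nonempty proper zero-weight subset, hence $H_{k-1}=H_l$ (and in particular $H_i=0$) can never occur, so $\PT(\ppf)=\PP(\ppf)$ has $N!$ elements, and Theorem \ref{thm:comb_biject} together with $|\TM(\ppf)|=N(N-1)|\Tree(\ppf)|$ gives $(N-2)!$. Your contrapositive case analysis and the check that \eqref{eq:enumeration_passport_decomposable} specializes to \eqref{eq:enumeration_passport_nondecomposable} are slightly more explicit than the paper's one-line observation, but the substance is identical.
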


\begin{proof}[Proof of Corollary]
    For any permutation $\perm \in \PP(\ppf)$ and $k < l \in [N]$, $H_k \neq H_l$ since $\ppf$ is non-decomposable. In particular, for all $i \in [N-1]$, $H_i \neq 0$. Hence $\perm \in \PT(\ppf)$, and then $\absbig{\PT(\ppf)} = \absbig{\PP(\ppf)} = N!$. 
    By Theorem \ref{thm:comb_biject}, $\absbig{\PT(\ppf)} = \absbig{\TM(\ppf)} = \absbig{\Tree(\ppf)} \cdot N(N-1)$, so $\absbig{\Tree(\ppf)} = (N-2)!$.
\end{proof}

\begin{remark}
    It can be checked that the combing map is always surjective. This requires the structure of a general permutation, which is sketched in Remark \ref{rmk:horizontal_division_general}. Nevertheless, these facts are not used in the proof of main theorem. 
    \hfill $\square$
\end{remark}

To find the inverse map, we represent a twice-marked LWBP-tree by a region consisted of horizontal rectangles. Then cutting this region vertically will recover a permutation. Here is the key observation.

\begin{proposition}\label{prop:sign-changing_point}
    For $\perm \in \PT(\ppf)$, a \textbf{sign-changing point} of $\perm$ is a number $i \in [N]$ that satisfies $H_{i-1} \cdot H_i < 0$. 
    Assume the sign-changing points of $\perm$ are $ 1 < \ell(1) < \cdots < \ell(M) < N$. 
    Then the unique path in $G(\perm)$ from $1$ to $N$ is just 
    \[ \ell:= \big(\ 1={\ell(0)},\ {\ell(1)},\ \cdots,\ {\ell(M)},\ {\ell(M+1)=N}\ \big) \ . \]
\end{proposition}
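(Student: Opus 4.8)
First I would dispose of the easy half. Since $\perm \in \PT(\ppf)$, Proposition \ref{prop:comb_PT_tree} tells us that $G(\perm)$ is a tree, so there is a \emph{unique} simple path in $G(\perm)$ joining vertex $1$ and vertex $N$. Because $1 = \ell(0) < \ell(1) < \cdots < \ell(M) < \ell(M+1) = N$, the vertices $\ell(0), \ldots, \ell(M+1)$ are pairwise distinct, so it will suffice to show that for each $t \in \{0, 1, \ldots, M\}$ there is an edge of $G(\perm)$ between $\ell(t)$ and $\ell(t+1)$: the resulting walk is then automatically a simple path, and by uniqueness of paths in a tree it must be the path $\ell$.

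To produce those edges I would invoke the edge criterion, Proposition \ref{prop:edge_criterion}. The structural fact that makes it apply is that the sign-changing points cut the sequence $H_0, H_1, \ldots, H_N$ into maximal runs of constant sign: condition (1) of Proposition \ref{prop:comb_PT_tree} gives $H_i \neq 0$ for $i \in [N-1]$ while $H_0 = H_N = 0$, so for each $t$ the block $H_{\ell(t)}, \ldots, H_{\ell(t+1)-1}$ (nonempty, as $\ell(t) < \ell(t+1)$) has a single fixed sign $\varepsilon_t \in \{+,-\}$, adjacent blocks carry opposite signs, and the flanking values $H_{\ell(t)-1}$ and $H_{\ell(t+1)}$ are each either $0$ --- which happens exactly at the two ends $t = 0$ and $t = M$ --- or of sign $-\varepsilon_t$.

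Granting this, fix $t$ and write $k = \ell(t)$, $l = \ell(t+1)$, so $1 \leq k < l \leq N$. If $\varepsilon_t = +$, then $\min\{H_k, \ldots, H_{l-1}\} > 0$, being a minimum of finitely many strictly positive numbers, whereas $H_{k-1} \leq 0$ and $H_l \leq 0$ force $\max\{H_{k-1}, H_l, 0\} = 0$; hence
\[
\min\{H_k, \ldots, H_{l-1}\} \;>\; \max\{H_{k-1}, H_l, 0\},
\]
and Proposition \ref{prop:edge_criterion} yields an edge between $k$ and $l$. If $\varepsilon_t = -$, the mirror inequality $\max\{H_k, \ldots, H_{l-1}\} < \min\{H_{k-1}, H_l, 0\} = 0$ holds, and Proposition \ref{prop:edge_criterion} again yields such an edge. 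This completes the reduction, and with it the proof.

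The only part demanding attention --- and I do not expect a genuine obstacle here --- is the bookkeeping of signs at the block boundaries, especially the degenerate cases $t = 0$ (where $H_{k-1} = H_0 = 0$), $t = M$ (where $H_l = H_N = 0$), and $M = 0$ (no sign-changing points at all, so $\ell = (1, N)$ and the path is the single edge between $1$ and $N$). In each of these the relevant extremum taken against $0$ is simply $0$, so the computation above applies unchanged.
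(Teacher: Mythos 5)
Your proof is correct and follows essentially the same route as the paper's: reduce to exhibiting an edge between consecutive sign-changing points, observe that the cumulative sums between them form a maximal constant-sign block with flanking values zero or of opposite sign, and apply Proposition \ref{prop:edge_criterion}. The handling of the boundary cases $t=0$, $t=M$, and $M=0$ matches the paper's treatment.
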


\begin{proof}
Since $G(\perm)$ is a tree, we only need to show that there is an edge between $\ell(i)$ and $\ell(i+1)$, $\forall\ 0 \leq i \leq M$. 
Because $\perm \in \PT(\pp_F)$, $H_j \neq 0$ except for $j=0=\ell(0)-1$ and $j=N=\ell(M+1)$.

For $1 \leq i \leq M-1$ with $H_{\ell(i)-1} < 0$ and $H_{\ell(i)} > 0$, there is no sign-changing point between $\ell(i)$ and $\ell(i+1)$, then $H_j > 0$ for all $\ell(i) \leq j \leq \ell(i+1)-1$, and $H_{\ell(i+1)} \leq 0$.
Since $\min\{H_{\ell(i)} , \cdots , H_{\ell(i+1)-1}\} > 0 = \max\{H_{\ell(i)-1}, H_{\ell(i+1)}, 0\} $,
there must be an edge between $\ell(i)$ and $\ell(i+1)$ by Proposition \ref{prop:edge_criterion}. 
For $1 \leq i \leq M-1$ with $H_{\ell(i)-1} > 0$ and $H_{\ell(i)} < 0$, the arguments are similar. 
For $i = 0$ or $M$, one of $H_{\ell(i)-1}, H_{\ell(i+1)}$ is zero and the above arguments hold again. 
\end{proof}

\medskip
Now we are ready to construct the inverse map. Some definition is needed. 
Let $(T;a,b) \in \TM(\ppf)$ and $\ell=(a=\ell(0), \ell(1),\cdots,\ell(M+1)=b)$ be the unique path from vertex $a$ to $b$ in $T$. The edge from $\ell(i)$ to $\ell(i+1)$ in $\ell$ is denoted by $e_{i}^{0}$, $0\leq i \leq M$.  

\begin{construction}\label{cnstr:fold1}
    For each $(T;a,b) \in \TM(\ppf)$, we shall define a closed region $\widehat{R}(T;a,b)$ in $\RR^2$ that consists of horizontal rectangles, with each rectangle $\mathbf{e}$ corresponding to an edge $e$ of $T$. See Figure \ref{fig:folding} as a quick preview.
    \begin{enumerate}
        \item For $0 \leq i \leq M$, define the horizontal rectangle
        \begin{equation*} \mathbf{e_i^{0}} :=
        \begin{cases}
            [i, i+1] \times [0,\quad \wt_E(e_i^{0})], &\textrm{ if } \ell(i) \textrm{ is black; }\\ 
            [i, i+1] \times [-\wt_E(e_i^{0}), 0], &\textrm{ if } \ell(i) \textrm{ is white. }\\ 
        \end{cases}
        \end{equation*}
        \item For $0\leq k \leq M+1$, label edges in $E(\ell(k))$ as 
        \[ \big( e_{k}^{0}, e_{k}^{1},\cdots e_{k}^{\alpha}, e_{k-1}^{0}, e_{k-1}^{1},\cdots e_{k-1}^{\beta} \big) \]
        anticlockwise if $\ell(k)$ is black, and clockwise if $\ell(k)$ is white. 
        In addition, if $k=0$, $e_{k-1}^0, e_{k-1}^{1},\cdots,e_{k}^{\beta}$ do not appear; if $k=M+1$, $e_{k}^0,\cdots, e_{k}^\alpha$ do not appear.
        \begin{itemize}
        \item When $k$ is black, draw $\alpha$ rectangles left-aligned above rectangle $\mathbf{e_{k}^{0}}$ :
        \[ \mathbf{e_k^i} := [\ k,\ k+3^{-i}\ ] \times [\ \textstyle\sum_{j=0}^{i-1}\wt_E({e_k^j}) ,\ \textstyle\sum_{j=0}^{i}\wt_E({e_k^j})\ ],\ 1 \leq i \leq \alpha; \]
        and $\beta$ rectangles right-aligned below $\mathbf{e_{k}^{0}}$: 
        \[ \mathbf{e_{k-1}^i} := [\ k-3^{-i},\ k\ ] \times [\ -\textstyle\sum_{j=0}^{i}\wt_E({e_k^j}) ,\ -\textstyle\sum_{j=0}^{i-1}\wt_E({e_k^j})\ ],\ 1 \leq i \leq \beta.\] 
        \item When $k$ is white, draw $\alpha$ rectangles left-aligned below $\mathbf{e_{k}^{0}}$ : 
        \[ \mathbf{e_k^i} := [\ k,\ k+3^{-i}\ ] \times [\ -\textstyle\sum_{j=0}^{i}\wt_E({e_k^j}) ,\ -\textstyle\sum_{j=0}^{i-1}\wt_E({e_k^j})\ ],\ 1 \leq i \leq \alpha;\]
        and $\beta$ rectangles right-aligned above $\mathbf{e_{k}^{0}}$: 
        \[ \mathbf{e_{k-1}^i} := [\ k-3^{-i},\ k\ ] \times [\ \textstyle\sum_{j=0}^{i-1}\wt_E({e_k^j}) ,\ \textstyle\sum_{j=0}^{i}\wt_E({e_k^j})\ ],\ 1 \leq i \leq \beta.\] 
        \end{itemize}

        \item Repeat the following until every edge has its corresponding horizontal rectangle drawn. 
        Assume that $e^0$ is an edge connecting $v,w\in V$ and every edge in $E(v)$ has its rectangle drawn, while the only such edge in $E(w)$ is $e^0$. Label edges in $E(w)$ as
        $ (e^0, e^1, \cdots, e^\gamma) $
        anticlockwise if the rectangle $\mathbf{e^0}$ is above the x-axis, and clockwise if $\mathbf{e^0}$ is below the x-axis. 
        \begin{itemize}
        \item When $\mathbf{e^0}$ is above the x-axis and $w$ is black, let $\mathbf{e^0}=[L, L+l]\times[H,H+\wt_E(e^0)]$. Draw $\alpha$ rectangles left-aligned above $\mathbf{e^0}$:
        \[ \mathbf{e^i} := [\ L,\ L+3^{-i}l\ ] \times [\ H+\textstyle\sum_{j=0}^{i-1}\wt_E({e_k^j}) ,\ H+\textstyle\sum_{j=0}^{i}\wt_E({e_k^j})\ ] . \]
        \item When $\mathbf{e^0}$ is above the x-axis and $w$ is white, let $\mathbf{e^0}=[L-l, L]\times[H,H+\wt_E(e^0)]$. Draw $\alpha$ rectangles right-aligned above $\mathbf{e^0}$:
        \[ \mathbf{e^i} := [\ L-3^{-i}l,\ L\ ] \times [\ H+\textstyle\sum_{j=0}^{i-1}\wt_E({e_k^j}) ,\ H+\textstyle\sum_{j=0}^{i}\wt_E({e_k^j})\ ] . \]
        \item When $\mathbf{e^0}$ is below the x-axis and $w$ is black, let $\mathbf{e^0}=[L-l, L]\times[H-\wt_E(e^0),H]$. Draw $\alpha$ rectangles right-aligned below $\mathbf{e^0}$:
        \[ \mathbf{e^i} := [\ L-3^{-i}l,\ L\ ] \times [\ H-\textstyle\sum_{j=0}^{i}\wt_E({e_k^j}) ,\ H-\textstyle\sum_{j=0}^{i-1}\wt_E({e_k^j})\ ] . \]
        \item When $\mathbf{e^0}$ is below the x-axis and $w$ is white, let $\mathbf{e^0}=[L, L+l]\times[H-\wt_E(e^0),H]$. Draw $\alpha$ rectangles left-aligned below $\mathbf{e^0}$:
        \[ \mathbf{e^i} := [\ L,\ L+3^{-i}l\ ] \times [\ H-\textstyle\sum_{j=0}^{i}\wt_E({e_k^j}) ,\ H-\textstyle\sum_{j=0}^{i-1}\wt_E({e_k^j})\ ] . \]
        \end{itemize}

        \item All horizontal rectangles have pairwise disjoint interiors. The union of these rectangles is denoted by $\widehat{R}(T;a,b)$. 
    \end{enumerate}
\end{construction}

\begin{figure}[t]
    \centering
\makebox[\textwidth]{
\begin{tikzpicture}[scale=0.6, transform shape]
    \draw (-3,-1) rectangle (0 ,0 ) ;
    \draw (0 ,0 ) rectangle (3 ,1 ) ;
    \node at (-1.5,-0.5) {\large$\mathbf{e^0_1}$};
    \node at ( 1.5, 0.5) {\large$\mathbf{e^0_2}$};

    \node[blk](11) at (0 ,-5) {$3_1$};
    \node[wht](12) at (-3,-5) {$\bar{3}_2$};
    \node[wht](13) at (3 ,-5) {$\bar{3}_3$};
    \draw[line width=1.4pt, double=white, double distance=1pt, black] (12)--(11)--(13);

    \node at (0 ,-7.5) {\Large $(\bar{3}_2\quad,\quad 3_1\quad,\quad \bar{3}_3)$};

    \begin{scope}[shift={(8.5,0)}]
    \draw (-3,-1) rectangle ( 0, 0) ;
    \draw ( 0, 0) rectangle ( 3, 1) ;
    \draw (-3,-3) rectangle (-2,-1) ; 
    \draw ( 0, 1) rectangle ( 1, 2) ;
    \draw ( 2, 1) rectangle ( 3, 3) ;

    \node[blk](21) at (0 ,-5) {$3_1$};
    \node[wht](22) at (-3,-5) {$\bar{3}_2$};
    \node[wht](23) at (3 ,-5) {$\bar{3}_3$};
    \draw[line width=1.4pt, double=white, double distance=1pt, black] (22)--(21)--(23);
    \node[blk](24) at (-2,-6) {$2_1$};
    \node[wht](25) at ( 1,-4) {$\bar{3}_1$};
    \node[blk](26) at ( 2,-4) {$2_3$};
    \draw[thick] (22) to (24);
    \draw[thick] (21) to (25);
    \draw[thick] (23) to (26);

    \node at (0,-7.5) {\Large $(\textcolor{lightgray}{\bar{3}_2\ ,\ } 2_1\ ,\ \textcolor{lightgray}{3_1\ ,\ }\bar{3}_1\ ,\ 2_3\ ,\  \textcolor{lightgray}{\bar{3}_3})$};
    \end{scope}

    \begin{scope}[shift={(17,0)}]
    \draw (-3,-1) rectangle ( 0, 0) ;
    \draw ( 0, 0) rectangle ( 3, 1) ;
    \draw (-3,-3) rectangle (-2,-1) ; 
    \draw ( 0, 1) rectangle ( 1, 2) ;
    \draw ( 2, 1) rectangle ( 3, 3) ;
    \draw ( 1, 2) rectangle (2/3,4) ;
    \draw[line width=3pt] (-3,-3) -- (-3, 0); 
    \draw[line width=3pt] (-2,-3) -- (-2,-1);
    \draw[line width=1pt, dotted] (-2,-1) -- (-2, 0); 
    \draw[line width=3pt] ( 0,-1) -- ( 0, 2); 
    \draw[line width=3pt] (2/3,2) -- (2/3,4);
    \draw[line width=1pt, dotted] (2/3,2) -- (2/3, 0);
    \draw[line width=3pt] ( 1, 1) -- ( 1, 4);
    \draw[line width=1pt, dotted] (1,1) -- (1,0);
    \draw[line width=3pt] ( 2, 1) -- ( 2, 3);
    \draw[line width=1pt, dotted] (2,1) -- (2,0);
    \draw[line width=3pt] ( 3, 0) -- ( 3, 3); 

    \node[blk](21) at (0 ,-5) {$3_1$};
    \node[wht](22) at (-3,-5) {$\bar{3}_2$};
    \node[wht](23) at (4 ,-5) {$\bar{3}_3$};
    \draw[line width=4pt] (22)--(21)--(23);
    \draw[line width=1.4pt, white] (22)--(21)--(23);
    \node[blk](24) at (-2,-6) {$2_1$};
    \node[wht](25) at (1.7,-4) {$\bar{3}_1$};
    \node[blk](26) at ( 3,-4) {$2_3$};
    \draw[thick] (22) to (24);
    \draw[thick] (21) to (25);
    \draw[thick] (23) to (26);
    \node[blk](27) at (1,-2.7) {$2_2$};
    \draw[thick] (25) to (27);
    \node at (0.5,-7.5) {\Large $\perm(T;a,b) = (\textcolor{lightgray}{\bar{3}_2, 2_1, 3_1,} 2_2, \textcolor{lightgray}{\bar{3}_1, 2_3, \bar{3}_3})$};
    \end{scope}

    \begin{scope}[shift={(8.5,6)}]
    \node[blk](v2) at ( 0, 0) {$3_1$};
    \node[wht](a) at (-3, 1) {$\bar{3}_2$};
    \node[wht](b) at ( 4, 1) {$\bar{3}_3$};
    \draw[thick] (v2) to (a);
    \draw[thick] (v2) to (b);
    \node[blk](v1) at (-1, 2) {$2_1$};
    \node[wht](v3) at ( 1, 2) {$\bar{3}_1$};
    \node[blk](v5) at ( 3,-1) {$2_3$};
    \draw[thick] (a) to (v1);
    \draw[thick] (v2) to (v3);
    \draw[thick] (v5) to (b);
    \node[blk](v4) at ( 3, 2) {$2_2$};
    \draw[thick] (v4) to (v3);
    \node at (-4,1) {\Large $a=$};
    \node at (5,1) {\Large $=b$};
    \draw[line width=1.4pt, double=white, double distance=1pt, black] (a)--node[pos=0.6,auto=right]{\Large $e^0_1$} (v2)--node[pos=0.4,auto=right]{\Large $e^0_2$} (b);
    \end{scope}
\end{tikzpicture}
}
    \caption{The folding map.}
    \label{fig:folding}
\end{figure}
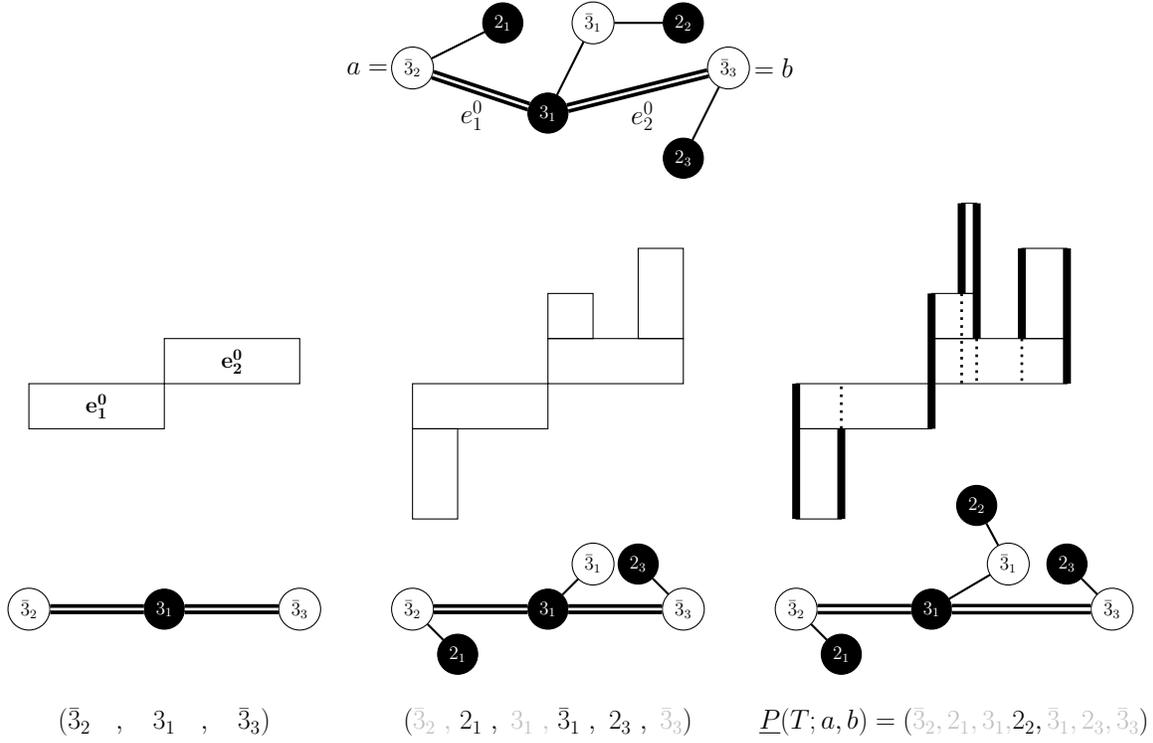

The following lemma is immediate from the construction, so proof is omitted. 
\begin{lemma}\label{lem:for_fold}
    For any $v\in V$, the horizontal rectangles corresponding to edges in $E(v)$ all terminate on a common boundary vertical segment of $\widehat{R}(T;a,b)$, whose length is exactly $\absbig{\wt_V(v)}$; these segments for distinct vertices have distinct x-coordinates. 
    Moreover, the vertical segments associated with marked vertices $a$ and $b$ are precisely those of minimal and maximal x-coordinate, respectively. 
    \hfill $\square$
\end{lemma}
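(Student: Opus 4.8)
The plan is to read off all three assertions from the bookkeeping in Construction \ref{cnstr:fold1}, by running an induction on the order in which the rectangles are drawn there: first the path rectangles $\mathbf{e_0^0},\dots,\mathbf{e_M^0}$ of step (1), then the rectangles attached at each path vertex in step (2), and finally the rectangles produced by the iteration in step (3). The invariant I would maintain is: once every edge of a vertex $v$ has received a rectangle, all of these rectangles abut a single vertical line $\{x_v\}\times\RR$, and along that line they occupy consecutive sub-segments whose lengths are exactly the numbers $\wt_E(e)$, $e\in E(v)$, appearing (through their partial sums) in the displayed formulas. Granting the invariant, the rectangles of $E(v)$ fill a vertical segment of length $\sum_{e\in E(v)}\wt_E(e)$, and since every edge weight is positive and $\wt_V(v)=\pm\sum_{e\in E(v)}\wt_E(e)$, this length equals $\absbig{\wt_V(v)}$, which is the first clause.

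Checking the invariant is the ``immediate from the construction'' part. For a path vertex $\ell(k)$, steps (1)--(2) place $\mathbf{e_k^0},\mathbf{e_k^1},\dots$ with left edge $x=k$ and $\mathbf{e_{k-1}^0},\mathbf{e_{k-1}^1},\dots$ with right edge $x=k$, stacked away from the $x$-axis in the order prescribed by the partial sums, so $x_{\ell(k)}=k$ and the segment on $\{x=k\}$ has the claimed length (the edge list $e_k^0,\dots,e_k^\alpha,e_{k-1}^0,\dots,e_{k-1}^\beta$ is precisely $E(\ell(k))$). In step (3), when a vertex $w$ is reached for the first time through an already-drawn rectangle $\mathbf{e^0}$, every edge of $E(w)\setminus\{e^0\}$ is drawn flush against the $w$-side vertical boundary of $\mathbf{e^0}$, which is where $\mathbf{e^0}$ itself terminates; so these rectangles and $\mathbf{e^0}$ share the line $x=x_w$, and they stack off that boundary, again consecutively by the partial sums. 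The several sub-cases of steps (2)--(3) differ only in signs and left/right alignment, so this settles the first clause for every $v$.

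For the remaining claims, I would note that the path rectangles have $x$-extents $[0,1],\dots,[M,M+1]$, so $x_a=0$, $x_b=M+1$, and the interior path vertices sit at the integers in between. Every rectangle drawn in steps (2)--(3) has $x$-extent strictly contained in that of the rectangle it is attached to --- visibly so, since its width is a factor $3^{-i}$ (or $3^{-i}$ times the parent width) of the parent --- so by induction every rectangle lies inside $[0,M+1]$, whence $x_v\in[0,M+1]$ for all $v$; moreover every rectangle touching the line $x=0$ is an edge of $a$ with $a$ on that side, and likewise for $x=M+1$ and $b$, so (rectangles having positive width, hence genuinely separating their two endpoints) the only vertex at $x=0$ is $a$ and the only one at $x=M+1$ is $b$. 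Finally, to see that the $x_v$ are pairwise distinct, observe that a newly revealed vertex always gets an $x$-coordinate that is an endpoint of a sub-interval $[L,L+3^{-i}l]$ (or its right-aligned mirror) of the parent's $x$-interval $[L,L+l]$, and these endpoints are distinct from the parent's two endpoints and from one another; crucially the descendants spawned from $\mathbf{e_k^i}$ remain inside $[L,L+3^{-i}l]\subseteq[L,L+\tfrac13 l]$, leaving the slot $(L+\tfrac13 l,\,L+l)$ untouched, and a short ternary-type estimate then shows no later $x$-coordinate can coincide with an earlier one.

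The only point that requires genuine care is this last one: verifying that the nested sub-intervals' distinguished endpoints never collide. It is exactly the reason Construction \ref{cnstr:fold1} scales widths by $3^{-i}$ rather than $2^{-i}$, and I expect it to be the main obstacle in turning this outline into a complete argument; everything else is routine tracking of the formulas.
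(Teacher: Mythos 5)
Your proposal is correct and is exactly the direct verification from Construction \ref{cnstr:fold1} that the paper has in mind — the paper omits the proof entirely, declaring the lemma ``immediate from the construction,'' and your induction over the drawing order, with the invariant that each vertex's rectangles stack consecutively along a single vertical line, supplies the omitted details faithfully. Your identification of the one delicate point (pairwise distinctness of the $x$-coordinates, which rests on the $3^{-i}$ scaling keeping each subtree inside a third of its parent's $x$-interval, so that sibling subtrees occupy disjoint slots) is accurate, and the nested-interval estimate you sketch does close it.
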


\newcommand{\fold}{\mathrm{Fold}} 
\begin{construction}\label{cnstr:fold2}
    There is a bijection between the boundary vertical segments of $\widehat{R}(T;a,b)$ and the vertex set $V$. Listing these segments in increasing order of x-coordinate gives a sequence $\perm(T;a,b) \in \PP$. This is called the \textbf{folding map} 
    \begin{align}\label{eq:folding}\begin{split}
        \fold: \ \TM(\ppf) \ & \longrightarrow \ \PP(\ppf) \\
        \ (T;a,b) \ & \longmapsto \perm(T;a,b)
    \end{split} \quad . \end{align}
\end{construction}

\medskip
\begin{proof}[Proof of Theorem \ref{thm:comb_biject}] 
    This is done by directly verifying that $\comb$ and $\fold$ are mutually inverse maps.

    \medskip
    (1). $\comb(\fold(T;a,b)) = (T;a,b)$ for all $(T;a,b)\in\TM(\ppf)$.

    In Construction \ref{cnstr:fold1}.(2)-(3), if $\mathbf{e_i^0}$ or $\mathbf{e^0}$ is above or below x-axis, then all new horizontal rectangles are above or below $\mathbf{e_0}$. 
    Hence, if a point $(x,y) \in \widehat{R}(T;a,b)$, then the segment connecting $(x,y)$ to $(x,0)$ also lies in $\widehat{R}(T;a,b)$. 
    Therefore, one can split $\widehat{R}(T;a,b)$ into vertical rectangles by extending all boundary vertical segments. For each vertical rectangle, there is exactly one edge on x-axis. By Lemma \ref{lem:for_fold}, it is not difficult to show by induction that the signed height of $i$-th vertical rectangle is just the sum of signed weights of the first $i$ vertices in $\perm(T;a,b)$. 
    So the vertical rectangle decomposition of $\widehat{R}(T;a,b)$ and $R(\perm(T;a,b))$ are basically the same, differed by an adjustment on the horizontal width of vertical rectangles. 
    We already know the horizontal rectangle decomposition of $\widehat{R}(T;a,b)$, which comes from $(T;a,b)$. Adjusting the width of vertical rectangles does not change the horizontal decomposition. 
    So there is a natural bijection between rectangles in the horizontal decomposition of $R(\perm(T;a,b))$ and $\widehat{R}(T;a,b)$, which preserves the height and relative positions of these rectangles. This implies $G(\perm(T;a,b)) = (T;a,b)$. By the last sentence of Lemma \ref{lem:for_fold}, the two marked vertices in order are $a$ and $b$ again.  

    \medskip
    (2). $\fold(\comb(\perm)) = \perm$ for all $\perm\in\PT(\ppf)$.

    Steps are similar. 
    Let $\ell$ be as stated in Proposition \ref{prop:sign-changing_point}. Then the horizontal rectangles in $\widehat{R}(G(\perm);s_1,s_N)$ from Construction \ref{cnstr:fold1}.(1) are in one-to-one correspondence with horizontal rectangles in $R(\perm)$ bounded by x-axis. The relative positions are kept by this bijection. 
    Next, the order of new horizontal rectangles added in Construction \ref{cnstr:fold1}.(2)-(3) accords with Corollary \ref{cor:rec_length_strict}. By induction steps, the above bijection extends to all horizontal rectangles in $\widehat{R}(G(\perm);s_1,s_N)$, preserving relative positions and heights. So horizontal decomposition of $\widehat{R}(G(\perm);s_1,s_N)$ and $R(\perm)$ are basically the same again. 
    With the bijection between boundary vertical segments of $\widehat{R}(G(\perm);s_1,s_N)$ and $[N]$ deduced from Lemma \ref{lem:for_fold}, $\perm(G(\perm);s_1,s_N) = \perm$. 
\end{proof}

\bigskip
\section{Decomposable case}\label{sec:decomposable}

We have proved Theorem \ref{thm:passport_simple} for non-decomposable passports. For decomposable case, we would like to use strong induction on the maximal partition length (Definition \ref{def:partition}), hence the known results for non-decomposable cases can be used as the initial step. 
But a direct enumeration of $\PT(\ppf)$ is hard to find. Our solution is counting a special class of permutations which allows recursion. 

In this section, we first introduce the set $\PP_+(\ppf)$ of positive permutations, and the set $\PT_+(\ppf)\subset \PP_+(\ppf)$ of positive tree permutations (Definition \ref{def:pos_perm}). The restricted combing map provides the second important bijection, from $\PT_+(\ppf)$ to the set of rooted LWBP-trees (Proposition \ref{prop:pos_and_tree}). Positive permutations admits a well-behaved decomposition, hence inducing a recursive enumeration formula (Proposition \ref{prop:horizontal_division}), in which both terms $\PP_+(*)$ and $\PT_+(*)$ appear. 

Next, an independent enumeration for $\PP_+(\ppf)$ is given. An essential technique is turning a decomposable passport into a non-decomposable one by perturbing its weight function. After chasing its effect on positive permutations, we get a recursive formula (\eqref{eq:Pos_refined_passport} in Lemma \ref{lem:Pos_refined_passport_and_formula}). To finish the induction step, properties for Stirling numbers of the second kind are used.

Finally, with this independent enumeration, we can deduce the formula for $\PT_+(\ppf)$ from the recursive formula of $\PP_+(\ppf)$ (Theorem \ref{thm:number_PosT}). This also completes the proof of main theorem.

\medskip
More notations for partitions are needed in this section, which help us to reveal delicate algebraic relations. 

There is a partial order on $\Ptt(\ppf)$, essential for the enumeration in this Section.
\begin{definition}\label{def:finer_partition}
    In $\Ptt(\ppf)$, a partition $\ptt{q}$ is \textbf{finer than} a partition $\ptt{p} = \{ \pp_i \}_{i=1}^{n}$, denoted as $\ptt{q} \preccurlyeq \ptt{p}$, if there exist partitions $\ptt{p}_i \in \Ptt(\pp_i)$ such that $\ptt{q} = \bigsqcup_{i=1}^{n} {\ptt{p}_i}$ as sets of subpassports of $\ppf$. 
    We also say that $\ptt{p}$ is \textbf{coarser than} $\ptt{q}$. 
    Then $(\Ptt(\ppf) , \preccurlyeq)$ is a partially ordered set.  

    If $\ptt{q} = \bigsqcup_{i=1}^{n} {\ptt{p}_i}$ is finer than $\ptt{p}$ as above, then we see 
\begin{equation}
    X(\ptt{q}) = \prod_{i=1}^{n} X(\ptt{p}_i) \ .
\end{equation} 
\end{definition}

Sometimes we will consider ordered partitions. 
\begin{definition}\label{def:ordd_partition}
    An \textbf{ordered} $n$-partition is an ordered sequence of subpassports 
    $ \ptt{p}^o := (\pp_1, \cdots, \pp_n) $, 
    such that the unordered set $\{ \pp_1, \cdots, \pp_n\}$ is an $n$-partition. 
    Denote $\Ptt_n^o(\ppf)$ as the set of all ordered $n$-partitions of $\ppf$. 
\end{definition}

\bigskip
\subsection{Positive permutation}\label{ssec:pos_perm} 

The special class of permutations we are studying is characterized by the positivity of cumulative sum.

\begin{definition}\label{def:pos_perm}
    For a given full passport $\ppf$,
    \[ \PP_+(\pp_F) := \defset{\perm \in \PP(\ppf)}{H_i>0,\ \forall i\in[N-1]} \]
    is called the set of \textbf{positive permutations}. And define the set of \textbf{positive tree permutations} as
    \[ \PT_+(\ppf) :=  \PP_+(\pp_F) \cap \PT(\ppf) \ .  \]
\end{definition}

\begin{proposition}\label{prop:pos_and_tree}
    There is a bijection from the set of positive tree permutations to the set of rooted trees (Definition \ref{def:twice_marked_tree}), induced by the restricted combing map:   
    \[ \comb : \PT_+(\ppf) \longrightarrow \TR(\ppf) \ . \]
    Therefore, 
    \[ \absbig{\PT_+(\ppf)} = (\absbig{\ppf}-1) \absbig{\Tree(\ppf)}  \ . \]
\end{proposition}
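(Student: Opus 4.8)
The plan is to deduce this from the already-established bijection $\comb : \PT(\ppf) \to \TM(\ppf)$ of Theorem~\ref{thm:comb_biject} by restriction, so the bulk of the work is identifying the image of $\PT_+(\ppf)$ inside $\TM(\ppf)$. I claim that image is precisely $\TR(\ppf)$, i.e.\ that for a tree permutation $\perm = (s_1,\dots,s_N)$ with corresponding twice-marked tree $(G(\perm); s_1, s_N)$, the two marked vertices $s_1, s_N$ are the endpoints of a single edge if and only if $\perm$ is positive. First I would observe the ``easy'' direction: if $\perm \in \PT_+(\ppf)$, then $H_i > 0$ for all $i \in [N-1]$, while $H_0 = H_N = 0$; so the only sign-changing point computation in Proposition~\ref{prop:sign-changing_point} yields $M = 0$ and the unique path from $s_1$ to $s_N$ is just $(s_1, s_N)$ — a single edge. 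Hence $\comb(\perm) \in \TR(\ppf)$, so $\comb$ restricts to a map $\PT_+(\ppf) \to \TR(\ppf)$, and it is automatically injective since $\comb$ is injective on all of $\PT(\ppf)$.

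Next I would prove surjectivity onto $\TR(\ppf)$, equivalently that the preimage under $\comb$ of any $(T;a,b) \in \TR(\ppf)$ is a positive tree permutation. Since $\comb$ is a bijection onto $\TM(\ppf)$, there is a unique $\perm \in \PT(\ppf)$ with $\comb(\perm) = (T;a,b)$, and $\perm = \fold(T;a,b)$ by Construction~\ref{cnstr:fold2}. Now $a,b$ are the black and white endpoints of a single edge $e$, so the path $\ell$ from $a$ to $b$ consists of the one edge $e_0^0 = e$, and in Construction~\ref{cnstr:fold1}.(1) this produces the single ``spine'' rectangle $\mathbf{e_0^0}$, placed on one side of the x-axis. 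Every other edge's rectangle is then stacked (via steps (2)–(3)) strictly on the same side of the x-axis as the rectangle it is attached to, and a straightforward induction shows all rectangles other than the x-axis-incident ones lie strictly above (if $a$ is black) or strictly below (if $a$ is white) the x-axis; consequently, reading off the signed heights of the vertical slices, every partial cumulative sum $H_i$ for $i \in [N-1]$ is strictly positive (resp.\ negative). In the strictly-positive case $\perm \in \PP_+(\ppf)$ directly; in the strictly-negative case one uses the convention/symmetry that $(T;a,b) \in \TR(\ppf)$ has $a$ black, $b$ white, which pins down the spine rectangle above the x-axis and gives $H_i > 0$. Either way $\perm \in \PP_+(\ppf) \cap \PT(\ppf) = \PT_+(\ppf)$, establishing surjectivity, hence the bijection $\comb: \PT_+(\ppf) \xrightarrow{\ \sim\ } \TR(\ppf)$.

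Finally, the counting identity follows by a double-counting of $\TR(\ppf)$: each unrooted tree $T \in \Tree(\ppf)$ has a unique black vertex $a$ and a unique white vertex $b$ forming any prescribed edge, but more simply, the map $\TR(\ppf) \to \Tree(\ppf)$ forgetting the root is $(\absbig{\ppf}-1)$-to-one, since a tree on $N = \absbig{\ppf}$ vertices has exactly $N-1$ edges and each edge, being black–white, determines exactly one valid ordered pair $(a,b)$ with $a$ black. Therefore $\absbig{\TR(\ppf)} = (\absbig{\ppf}-1)\,\absbig{\Tree(\ppf)}$, and combining with the bijection gives $\absbig{\PT_+(\ppf)} = (\absbig{\ppf}-1)\,\absbig{\Tree(\ppf)}$.

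\textbf{Main obstacle.} The delicate point is the surjectivity direction — specifically, verifying carefully that in the folding construction applied to a \emph{rooted} tree (one edge on the spine), \emph{all} intermediate cumulative sums stay strictly on one side of the x-axis. This requires tracking Construction~\ref{cnstr:fold1}.(2)–(3) and confirming that no stacked rectangle ever crosses the axis, then translating ``all rectangles strictly above the axis'' into ``$H_i > 0$ for $1 \le i \le N-1$'' via the vertical-slice correspondence used in the proof of Theorem~\ref{thm:comb_biject}. I also need to fix the orientation convention (which of $a,b$ is black) so that ``positive'' rather than ``negative'' is the outcome; this is a bookkeeping matter but must be stated cleanly, perhaps by noting that swapping $a \leftrightarrow b$ corresponds to reversing $\perm$, which exchanges $\PP_+$ with its ``negative'' analogue.
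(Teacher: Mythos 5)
Your proposal is correct and follows essentially the same route as the paper: restrict the combing bijection of Theorem~\ref{thm:comb_biject}, identify the image of $\PT_+(\ppf)$ as $\TR(\ppf)$ (the paper gets the forward direction directly from Proposition~\ref{prop:edge_criterion} with $\wt(s_1)=H_1>0$ forcing $s_1$ black, where you route it through Proposition~\ref{prop:sign-changing_point}; the backward direction in the paper is exactly your observation that $\widehat{R}(T;a,b)$ lies above the x-axis), and then count $N-1$ edges per tree. The orientation bookkeeping you flag as the main obstacle is resolved just as you suggest: $a$ black pins the spine rectangle above the axis, so only the positive case occurs.
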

\begin{proof}
    By Proposition \ref{prop:edge_criterion}, for any $\perm=(s_i)_{i=1}^N\in \PT_+(\ppf)$, there is a unique edge connecting $s_1, s_N$. Besides, $\wt(s_1)=H_1>0$ and then $\wt(s_N)<0$. 
    Conversely, for any rooted tree $(T;a,b)\in\TR(\ppf)$, the region $\widehat{R}(T;a,b)$ in Construction \ref{cnstr:fold1} is obviously above the x-axis. 
    
    Now a tree in $\Tree(\ppf)$ always has $(\absbig{\ppf}-1)$ edges. And the black and white vertex of an edge is clearly unique. So $\absbig{\TR(\ppf)}$ is precisely $(\absbig{\ppf}-1)\cdot\absbig{\Tree(\ppf)}$.   
\end{proof}

\begin{corollary}\label{cor:PT_pos_nondecomp}
    Together with Corollary \ref{cor:count_nondecomp}, for non-decomposable $\ppf$, 
    \[ \absbig{\PP_+(\ppf)} = \absbig{\PT_+(\ppf)} = (\absbig{\ppf} - 1)!\ . \eqno\square \] 
\end{corollary}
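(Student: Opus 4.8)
The plan is to prove the two equalities in Corollary \ref{cor:PT_pos_nondecomp} by chaining together the counting identities already established for non-decomposable passports, with one small additional observation about $\PP_+(\ppf)$ versus $\PT_+(\ppf)$. First I would note that for a non-decomposable $\ppf$ every permutation satisfies $H_k \neq H_l$ for $k < l$ (this is exactly the argument in the proof of Corollary \ref{cor:count_nondecomp}): if some $H_k = H_l$, then $\wt$ restricted to $\{s_{k+1}, \ldots, s_l\}$ sums to zero, giving a proper subpassport and hence a partition of length $\geq 2$, contradicting non-decomposability. In particular, taking $k=0$ (with $H_0 = 0$) shows $H_i \neq 0$ for all $i \in [N-1]$, and conditions (2)–(3) of Proposition \ref{prop:comb_PT_tree} are vacuous, so $\PT(\ppf) = \PP(\ppf)$. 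This was already observed; what is new here is the positive version.

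Next I would argue that $\PP_+(\ppf) = \PT_+(\ppf)$ for non-decomposable $\ppf$. By definition $\PT_+(\ppf) = \PP_+(\ppf) \cap \PT(\ppf)$, and since we have just seen $\PT(\ppf) = \PP(\ppf)$ in the non-decomposable case, the intersection is simply $\PP_+(\ppf)$. Then I would combine the two bijections: Corollary \ref{cor:count_nondecomp} gives $\absbig{\Tree(\ppf)} = (N-2)!$ where $N = \absbig{\ppf}$, and Proposition \ref{prop:pos_and_tree} gives $\absbig{\PT_+(\ppf)} = (N-1)\absbig{\Tree(\ppf)}$. Multiplying, $\absbig{\PT_+(\ppf)} = (N-1)(N-2)! = (N-1)!$, and by the previous sentence $\absbig{\PP_+(\ppf)} = \absbig{\PT_+(\ppf)} = (N-1)!$ as claimed.

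This is essentially a bookkeeping corollary, so I do not anticipate a genuine obstacle; the only point requiring a moment's care is making explicit that the combing map restricted to positive permutations is the relevant bijection — i.e., that $\comb$ sends $\PT_+(\ppf)$ onto $\TR(\ppf)$ and not merely into $\TM(\ppf)$ — but this is precisely the content of Proposition \ref{prop:pos_and_tree}, so it can be cited directly. An alternative route, avoiding any appeal to $\TR(\ppf)$, would be a purely enumerative one: show directly that a permutation $(s_1, \ldots, s_N)$ lies in $\PP_+(\ppf)$ iff all partial sums $H_1, \ldots, H_{N-1}$ are strictly positive, and count these directly by a cycle-lemma / Dvoretzky–Motzkin type argument giving $(N-1)!$; but since Proposition \ref{prop:pos_and_tree} and Corollary \ref{cor:count_nondecomp} are already available, the chained-identity proof is shorter and is the one I would present.
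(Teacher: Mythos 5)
Your proof is correct and follows exactly the route the paper intends: the observation from the proof of Corollary \ref{cor:count_nondecomp} that $\PT(\ppf)=\PP(\ppf)$ for non-decomposable $\ppf$ gives $\PP_+(\ppf)=\PT_+(\ppf)$, and Proposition \ref{prop:pos_and_tree} combined with $\absbig{\Tree(\ppf)}=(N-2)!$ yields $(N-1)!$. Nothing to add.
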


\medskip
Proposition \ref{prop:pos_and_tree} bridges the enumeration of LWBP-trees and positive permutations. 
The following decomposition result for general positive permutations will instruct the computation. 

Let $\perm=(s_i)_{i=1}^N\in \PP_+(\ppf)$. By Proposition \ref{prop:comb_PT_tree} and its proof, if $G(\perm)$ is not a tree, then there exist $k < l \in [N-1]$ such that $H_{k-1}=H_l >0$ and $\min\{H_k,\cdots,H_{l-1}\} > H_{k-1} = H_l$. Therefore, the extended horizontal segment at height $H_{k-1}$, bounded by the $k$-th and $l$-th vertical line, cuts $R(\perm)$ into two part. Denote the part higher than this horizontal segment as $R_2(\perm)$. 
Define
    \begin{itemize}
        \item two index sets $S_2 := \{s_k, \cdots, s_l\},\ S_1 := [N] \setminus S_2$; 
        \item two subpassports $\pp_i := \pp(S_i)$ of $\ppf$ ($i = 1,2$); 
        \item and two permutations $\perm_1 := (s_1,\cdots,s_{k-1},s_{l+1},\cdots,s_N) \in \PP(\pp_1)$, $\perm_2:=(s_k,\cdots,s_l) \in \PP(\pp_2)$. 
    \end{itemize}
It is not hard to see that the closed region $R(\perm_2)$ and the part $R_2(\perm)$ differ by a translation. $R(\perm_1)$ can be obtained from $R(\perm)$ by two steps. First, remove all vertical rectangles between the $k$-th and the $(l+1)$-th vertical lines; then translate the portion to the right of the $(l+1)$-th vertical line leftward by $l-k+1$. In the proof of Proposition \ref{prop:comb_PT_tree}, we show that there is no edge from $\{k,\cdots,l\}$ to the other vertices. 
So the forest $G(\perm)$ is the disjoint union of $G(\perm_1)$ and $G(\perm_2)$. 
Besides, 
\begin{align*}
    &\min\{\wt(s_k), \cdots, \wt(s_k)+\dots\wt(s_{l-1}) \} \\
    &= \min\{H_{k}-H_{k-1}, \cdots, H_{l-1}-H_{k-1} \} 
    > H_{k-1}-H_{k-1} = H_{l}-H_{k-1} = 0 .    
\end{align*}
So $\perm_2 \in \PP_+(\pp_2)$ and there is an edge between vertex $k$ and $l$ in $G(\perm_2) \in \Fore(\pp_2)$. 
See Figure \ref{fig:Pos_decomp} as an example of this procedure. 

One can keep extracting a subsequences from $\perm_1$, until $G(\perm_1)$ becomes a tree. So the structure of $G(\perm)$ or $R(\perm)$ for general $\perm \in \PP_+(\ppf)$ may be intuitively pictured as follows. There is an underlying tree (of a smaller subpassport) right above the x-axis, with many very long edges; some forests from positive permutations (of smaller subpassports) grow on some of these long edges. See Figure \ref{fig:Pos_decomp} again.

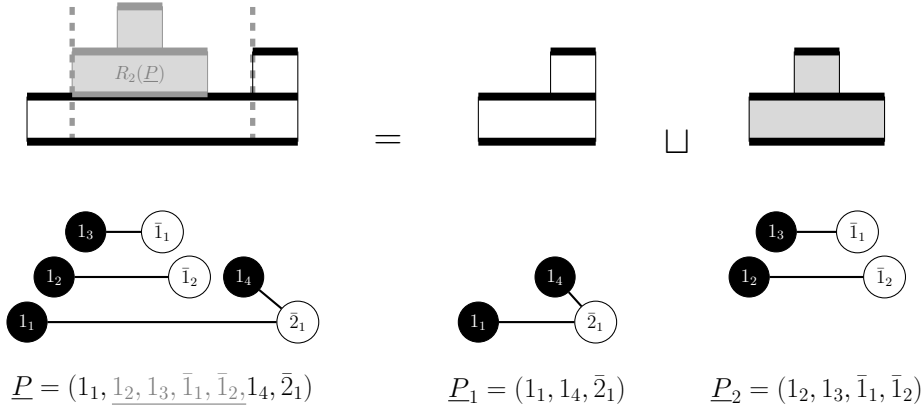
\begin{figure}[t]
\makebox[\textwidth]{
\begin{tikzpicture}[scale=0.6, transform shape]
\fill[lightgray!60] (1 ,1 ) rectangle (4 ,2 ) ;
\fill[lightgray!60] (2 ,2 ) rectangle (3 ,3 ) ;
\draw[line width=2pt, draw=black!40] (1 , 0) edge[dashed] (1 ,3 );
\draw[line width=2pt, draw=black!40] (5 , 0) edge[dashed] (5 ,3 );

\draw (0 ,0 ) edge (0, 1 );
\draw[draw=black!40] (1 ,1 ) edge (1, 2 );
\draw[draw=black!40] (2 ,2 ) edge (2, 3 );
\draw[draw=black!40] (3 ,3 ) edge (3, 2 );
\draw[draw=black!40] (4 ,2 ) edge (4, 1 );
\draw (5 ,1 ) edge (5, 2 );
\draw (6 ,2 ) edge (6, 0 );

\draw[line width=3pt] (0 ,0 ) -- (6, 0 );
\draw[line width=3pt] (0 ,1 ) -- (6, 1 );
\draw[line width=3pt] (5 ,2 ) -- (6, 2 );
\draw[line width=3pt, draw=black!40] (1 ,2 ) -- (4, 2 );
\draw[line width=3pt, draw=black!40] (2 ,3 ) -- (3, 3 );
\draw[line width=2.2pt, draw=black!40] (1 ,1.05) -- (4 ,1.05);

\node[text=gray]() at (2.5,1.5) {$R_2(\perm)$};

\node[blk](11) at (0 ,-4) {$1_1$};
\node[blk](12) at (0.6,-3) {$1_2$};
\node[blk](13) at (1.3,-2) {$1_3$};
\node[wht](14) at (3 ,-2) {$\bar{1}_1$};
\node[wht](15) at (3.6,-3) {$\bar{1}_2$};
\node[blk](16) at (4.8,-3) {$1_4$};
\node[wht](20) at (6 ,-4) {$\bar{2}_1$};
\draw[thick] (11) to (20);
\draw[thick] (12) to (15);
\draw[thick] (13) to (14);
\draw[thick] (16) to (20);

\node at (3 ,-5.5) {\Large $\perm=(1_1, \textcolor{gray}{\underline{1_2, 1_3, \bar{1}_1, \bar{1}_2,}} 1_4, \bar{2}_1)$};

\node at (8,0) {\huge $=$};

\begin{scope}[shift={(10,0)}]
\draw (0 ,0 ) edge (0, 1 );
\draw (1.6,1 ) edge (1.6,2 );
\draw (2.6,2 ) edge (2.6,0 );
\draw[line width=3pt] (0 ,0 ) -- (2.6, 0 );
\draw[line width=3pt] (0 ,1 ) -- (2.6, 1 );
\draw[line width=3pt] (1.6,2 ) -- (2.6,2 );

\node[blk](a11) at (0 ,-4) {$1_1$};
\node[blk](a16) at (1.7,-3) {$1_4$};
\node[wht](a20) at (2.6,-4) {$\bar{2}_1$};
\draw[thick] (a11) to (a20);
\draw[thick] (a16) to (a20);

\node at (1.3,-5.5) {\Large $\perm_1=(1_1, 1_4, \bar{2}_1)$};
\end{scope}

\node at (14.4,0) {\huge $\sqcup$};

\begin{scope}[shift={(16,0)}]
\fill[lightgray!60] (0 ,0 ) rectangle (3 ,1 ) ;
\fill[lightgray!60] (1 ,1 ) rectangle (2 ,2 ) ;
\draw (0 ,0 ) edge (0, 1 );
\draw (1 ,1 ) edge (1, 2 );
\draw (2 ,2 ) edge (2, 1 );
\draw (3 ,1 ) edge (3, 0 );
\draw[line width=3pt] (0 ,0 ) -- (3, 0 );
\draw[line width=3pt] (0 ,1 ) -- (3, 1 );
\draw[line width=3pt] (1 ,2 ) -- (2 ,2 );

\node[blk](b12) at (0 ,-3) {$1_2$};
\node[blk](b13) at (0.6,-2) {$1_3$};
\node[wht](b14) at (2.4,-2) {$\bar{1}_1$};
\node[wht](b15) at (3 ,-3) {$\bar{1}_2$};
\draw[thick] (b12) to (b15);
\draw[thick] (b13) to (b14);

\node at (1.5,-5.5) {\Large $\perm_2=(1_2, 1_3, \bar{1}_1, \bar{1}_2)$};
\end{scope}
\end{tikzpicture}
}
\caption{Decomposition of a positive permutation.}
\label{fig:Pos_decomp}
\end{figure}

With this in mind, we have the following recursion formula for enumeration. 
For an ordered $2$-partition $(\pp_0, \pp_+) \in \Ptt_2^o$, an $n$-partition of the subpassport $\pp_+$ will be denoted by $\ptt{p}_+=\{\pp_i\}_{i=1}^{n}$ here. 
\begin{proposition}\label{prop:horizontal_division}
    For decomposable full passport $\ppf$, 
    \begin{align*}   
        \absbig{\PP_+(\ppf)} &= \absbig{\PT_+(\ppf)} \\ 
        & + \sum_{(\pp_0,\pp_+)\in \Ptt_2^o} {\absbig{\PT_+(\pp_0)} \sum_{\ptt{p}_+ \in \Ptt(\pp_+)} \frac{(\absbig{\pp_0}+\absbig{\ptt{p}_+}-2)!}{(\absbig{\pp_0}-2)!} \prod_{i = 1}^{\absbig{\ptt{p}_+}} {\absbig{\PP_+(\pp_i)}}} \ .
    \end{align*}
\end{proposition}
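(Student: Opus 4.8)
The plan is to partition $\PP_+(\ppf)$ according to whether $G(\perm)$ is connected, and to describe the disconnected permutations through a canonical version of the decomposition sketched just before the statement. Since $\PP_+(\ppf)=\PT_+(\ppf)\sqcup\{\perm\in\PP_+(\ppf):G(\perm)\text{ disconnected}\}$, the term $\absbig{\PT_+(\ppf)}$ is already accounted for, and everything reduces to enumerating the disconnected permutations.

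For $\perm\in\PP_+(\ppf)$, call an interval $[k,l]\subsetneq[N]$ a \emph{bump} if $H_{k-1}=H_l$ and $H_j>H_{k-1}$ for all $k\le j\le l-1$; this forces $H_{k-1}=H_l>0$, and subtracting the common value shows that $(s_k,\dots,s_l)$ is exactly a positive permutation of the subpassport on $\{s_k,\dots,s_l\}$ — these are precisely the blocks extracted in the procedure preceding the statement. From Proposition \ref{prop:edge_criterion} and Corollary \ref{cor:rec_separating} one gets that the vertices of a bump have no edge to the remaining vertices and that two bumps are always nested or disjoint; hence the \textbf{outermost} bumps $B_1,\dots,B_n$ of $\perm$ are pairwise disjoint. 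I would then set $\perm_0$ to be the subsequence of $\perm$ on the positions contained in no bump, and verify, by tracking cumulative sums: (a) $\perm_0$ is a positive permutation with no bump, hence $\perm_0\in\PT_+(\pp_0)$ by Proposition \ref{prop:comb_PT_tree}, where $\pp_0$ is the subpassport on the entries of $\perm_0$; (b) each $B_i$ is a positive permutation of some $\pp_i$, and $\ptt{p}_+:=\{\pp_i\}_{i=1}^n$ is a partition of $\pp_+:=$ the subpassport on the remaining entries; (c) positions $1$ and $N$ lie in no bump, so $\perm_0$ is non-empty, and $G(\perm)$ disconnected forces $n\ge 1$, so $(\pp_0,\pp_+)\in\Ptt_2^o$.

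Conversely, from data consisting of $(\pp_0,\pp_+)\in\Ptt_2^o$, a permutation $\perm_0\in\PT_+(\pp_0)$, a partition $\ptt{p}_+=\{\pp_i\}_{i=1}^n\in\Ptt(\pp_+)$, positive permutations $\perm_i\in\PP_+(\pp_i)$, and an assignment of the blocks $\perm_1,\dots,\perm_n$ to the $N_0-1$ ``gaps'' of $\perm_0$ (where $N_0:=\absbig{\pp_0}$) together with a linear order of the blocks within each gap, I would reconstruct $\perm$ by inserting these blocks into $\perm_0$ accordingly. This $\perm$ is positive (each junction sits at a positive cumulative sum of $\perm_0$, and an inserted block only rises strictly above that height before returning to it) and disconnected (each inserted block is a bump). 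The crucial point to check is that the outermost bumps of the reconstructed $\perm$ are exactly $\perm_1,\dots,\perm_n$ in the positions just chosen: two blocks placed consecutively in one gap do not merge, since their common level is attained at the junction between them; two blocks in different gaps do not merge, since condition (2) of Proposition \ref{prop:comb_PT_tree} forces $\perm_0$ to dip strictly below that level in between; and any strictly larger bump would restrict to a bump of $\perm_0$, which has none. Hence the two constructions are mutually inverse, so the disconnected permutations giving a fixed tuple $(\pp_0,\pp_+,\perm_0,\ptt{p}_+,(\perm_i))$ are in bijection with the assignments described.

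Finally, the number of such assignments — placing $n$ distinct blocks into $k:=N_0-1$ distinguishable linearly ordered lists — equals $k(k+1)\cdots(k+n-1)=\dfrac{(k+n-1)!}{(k-1)!}=\dfrac{(\absbig{\pp_0}+\absbig{\ptt{p}_+}-2)!}{(\absbig{\pp_0}-2)!}$ (insert the blocks one at a time; the $t$-th has $k+t-1$ admissible slots), and this depends on neither $\perm_0$ nor the $\perm_i$. Summing over the data — over the $\absbig{\PT_+(\pp_0)}$ choices of $\perm_0$, then over $\ptt{p}_+$ and the $\prod_i\absbig{\PP_+(\pp_i)}$ choices of the $\perm_i$, then over $(\pp_0,\pp_+)\in\Ptt_2^o$ — and adding back $\absbig{\PT_+(\ppf)}$ yields the claimed identity. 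I expect the main obstacle to be the verification in (a)--(c) together with the ``no unexpected merging or nesting'' step: making the cumulative-sum bookkeeping precise, so that $\perm_0$ genuinely has no bump, the outermost bumps are disjoint, and grafting is exactly reversible. Much of this is already present in the paragraph before the statement and in the proof of Proposition \ref{prop:comb_PT_tree}, which I would reuse; the enumeration of the grafting patterns itself is elementary.
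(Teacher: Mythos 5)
Your proposal is correct and follows essentially the same route as the paper's proof: your ``outermost bumps'' are exactly the paper's maximal horizontal pairs, the residual subsequence $\perm_0\in\PT_+(\pp_0)$ plays the same role, and your count of graftings ($n$ labeled blocks into $\absbig{\pp_0}-1$ ordered gaps, giving the rising factorial $(\absbig{\pp_0}-1)^{(\absbig{\ptt{p}_+})}$) matches the paper's one-by-one insertion count. The only difference is that you flag and sketch the ``no unexpected merging or nesting'' verification of the inverse construction explicitly, which the paper leaves implicit.
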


\begin{proof}
Let $\perm\in\PP_+(\ppf)\setminus\PT_+(\ppf)$. 
If $k < l \in [N-1]$ with $\min\{H_k,\cdots,H_{l-1}\} > H_{k-1} = H_l >0$, then $(k, l)$ is called a \textbf{horizontal pair}. 
A horizontal pair is \textbf{maximal} if there is no other horizontal pair $(k', l')$ with $k'<k$ and $l'>l$. 

For each maximal horizontal pair $(k_i, l_i)$, define $S_i := \{s_j\}_{j=k_i}^{l_i}$, a subpassport $\pp_i := \pp(S_i)$ and a sequence $\perm_i := (s_j)_{j=k_i}^{l_i} \in \PP_{+}(\pp_i)$. 
The index sets $S_i$ of different maximal horizontal pair are disjoint. 
Then define $\pp_0:= \pp(S\setminus\sqcup S_i)$, with the union running through all maximal horizontal pairs. 
Let $\perm_0 \in \PP(\pp_0)$ be the remained sequence obtained by removing all subsequences $\perm_i$ from $\perm$. By previous argument, we have $\perm_0 \in \PT_+(\pp_0)$. $s_1,\ s_N$ are the first and last element of $\perm_0$ again. And $G(\perm)$ is the disjoint union of a tree $G(\perm_0)$ and several forests $G(\perm_i)$. See Figure \ref{fig:Pos_decomp}. 

To uniquely recover such a permutation $\perm\in\PP_+(\ppf)\setminus\PT_+(\ppf)$, we first choose a subpassport $\pp_0 := \pp(S_0)$ induced by a non-empty proper subset $S_0 \subsetneq S$, and a permutation $\perm_0\in \PT_+(\pp_0)$. 
We also pick an $n$-partition $\ptt{p}_+=\{\pp_1,\cdots,\pp_n\}$ of the  subpassport $\pp_+ := \pp(S \setminus S_0)$, together with $n$ permutations $\perm_i\in\PP_+(\pp_i)$. 
Finally, we insert the later $n$ permutations into $\perm_0$ one by one. 
$\perm_1$ can be inserted between $s_k$ and $s_{k+1}$ of $\perm_0$ for any $1\leq k < k+1 \leq |\pp_0|$. So there are $|\pp_0| - 1$ insertion positions for $\perm_1$. 
The next permutation $\perm_2$ must not be inserted between any labels of $\perm_1$, so there are $|\pp_0|$ positions. 
Similarly, the number of insertion positions increases by one for each new permutations. 
So for each fixed $\pp_0$ and $\ptt{p}_+$, the number of choices in total is just the permutation number $(|\pp_0|+|\ptt{p}_+|-2)!/(|\pp_0|-2)!$. 
\end{proof}

\begin{remark} \label{rmk:horizontal_division_general}
There is a decomposition result for general $\perm$. We need negative permutations analogous to Definition \ref{def:pos_perm}.
One first splits $\perm$ into several subsequences at the zeros of $H_i$'s. For each subsequence, after extracting all maximal horizontal pairs bounding a positive or negative permutation, the remaining is a tree permutation. 
\hfill $\square$
\end{remark}

\bigskip
\subsection{Perturbed passport}\label{ssec:perturb} 

Now we count positive permutations. The key idea is turning a decomposable passport to a non-decomposable one by perturbing its weight function, which is very similar to the ``$\varepsilon$-edge'' technique in \cite{YYK15}. 
Here is the main result of this subsection. 

\begin{theorem} \label{thm:number_Pos}
    \begin{equation} \label{eq:number_Pos}
        \absbig{\PP_+(\ppf)} = \sum_{\ptt{p} \in \Ptt(\ppf)} {(-1)^{\absbig{\ptt{p}} - 1} X(\ptt{p})} .
    \end{equation}
\end{theorem}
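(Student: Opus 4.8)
The plan is to prove Theorem~\ref{thm:number_Pos} by strong induction on the maximal partition length $m(\ppf)$. The base case $m(\ppf)=1$ is exactly the non-decomposable case: by Corollary~\ref{cor:PT_pos_nondecomp} we have $\absbig{\PP_+(\ppf)} = (N-1)!$, while the right-hand side of \eqref{eq:number_Pos} reduces to the single term $\ptt{p}=\ptt{e}$, namely $(-1)^0 X(\ptt{e}) = (\absbig{\ppf}-1)! = (N-1)!$. So the two sides agree and the induction can start.

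For the inductive step, the idea advertised in the subsection heading is to perturb the weight function. Given a decomposable $\ppf=(S,\one,\wt)$, I would introduce a perturbed passport $\ppf^\varepsilon = (S,\one,\wt^\varepsilon)$ that is non-decomposable, obtained by adding a small generic increment to the weights so that no two cumulative sums $H_i$ can coincide for a reason that was not ``forced'' by a genuine sub-partition of $\ppf$. The point is then to track how positive permutations of $\ppf^\varepsilon$ degenerate to data attached to $\ppf$: each $\perm\in\PP_+(\ppf)$ corresponds, after perturbation, to a union of $\PP_+$-data on the blocks of some partition $\ptt{p}\in\Ptt(\ppf)$, and conversely. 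This should give the ``refined passport'' recursion quoted as \eqref{eq:Pos_refined_passport} in Lemma~\ref{lem:Pos_refined_passport_and_formula}: an expression for $\absbig{\PP_+(\ppf)}$ (equivalently, for $(N-1)!$, which counts $\PP_+(\ppf^\varepsilon)$) as a sum over partitions $\ptt{p}$ of $\ppf$ of products $\prod_i \absbig{\PP_+(\pp_i)}$ times a combinatorial coefficient counting the ways to interleave the blocks — and that coefficient should be governed by Stirling numbers of the second kind, since we are grouping $N$ labeled objects into the blocks and then arranging them. Concretely I expect something like $(N-1)! = \sum_{\ptt{p}\in\Ptt(\ppf)} c(\ptt{p}) \prod_{\pp_i\in\ptt{p}} \absbig{\PP_+(\pp_i)}$ with $c(\ptt{e})=1$, so that the $m(\ppf)\geq 2$ term $\absbig{\PP_+(\ppf)}$ can be solved for.

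With that recursion in hand, I would substitute the inductive hypothesis $\absbig{\PP_+(\pp_i)} = \sum_{\ptt{q}_i\in\Ptt(\pp_i)} (-1)^{\absbig{\ptt{q}_i}-1} X(\ptt{q}_i)$ for every proper subpassport $\pp_i$ (valid since $m(\pp_i) < m(\ppf)$ whenever $\ptt{p}\neq\ptt{e}$), expand the product, and reorganize the double sum over $(\ptt{p}, \{\ptt{q}_i\})$ as a single sum over the refined partition $\ptt{q} = \bigsqcup_i \ptt{q}_i \preccurlyeq \ptt{p}$, using $X(\ptt{q}) = \prod_i X(\ptt{q}_i)$ from Definition~\ref{def:finer_partition}. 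The upshot is that the coefficient of $X(\ptt{q})$ in $\absbig{\PP_+(\ppf)}$ becomes a weighted sum over all $\ptt{p}$ with $\ptt{q}\preccurlyeq\ptt{p}$ of the combinatorial coefficients $c(\ptt{p})$ and the signs $\prod_i (-1)^{\absbig{\ptt{q}_i}-1}$, and the whole point of the Stirling-number identity is that this sum collapses to exactly $(-1)^{\absbig{\ptt{q}}-1}$, matching the claimed formula \eqref{eq:number_Pos}. This is precisely the Möbius-inversion-type cancellation on the partition lattice, phrased through the inversion relation between Stirling numbers of the second kind and (signed) falling factorials / Stirling numbers of the first kind.

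The main obstacle, and the step I would spend the most care on, is establishing the refined-passport recursion \eqref{eq:Pos_refined_passport} rigorously: one must show that the perturbation $\wt \rightsquigarrow \wt^\varepsilon$ can be chosen so that $\ppf^\varepsilon$ is non-decomposable, that every positive permutation of $\ppf$ lifts to a positive permutation of $\ppf^\varepsilon$ in a way that records a well-defined partition $\ptt{p}$ (the ``collisions'' $H_{k-1}=H_l$ that survive), and — the delicate part — that the fibers of this correspondence are counted by the stated coefficient, with no double-counting across different $\ptt{p}$. Keeping the bookkeeping of which cumulative sums are allowed to collide, and checking the coefficient is genuinely a product of a Stirling number with factorials (so that the inversion formula applies cleanly), is where the real work lies; once that lemma is in place, the induction and the final algebraic collapse are essentially formal.
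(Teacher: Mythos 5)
Your plan follows the paper's proof essentially step for step: perturb $\ppf$ to a non-decomposable passport so as to obtain the recursion $(|\ppf|-1)! = \sum_{\ptt{p}\in\Ptt(\ppf)}(|\ptt{p}|-1)!\prod_i|\PP_+(\pp_i)|$ (the interleaving coefficient turns out to be $(|\ptt{p}|-1)!$, counting orderings of the blocks with the distinguished index $s_-$ forced into the last block), then run strong induction on $m(\ppf)$ and collapse the resulting double sum over refinements via the Stirling-number inversion $\sum_k S(n,k)(-1)^k x^{(k)}=(-x)^n$. The only cosmetic difference is that the paper packages the collapse by showing the candidate formula $NP_+(\ppf)$ satisfies the identical recursion (Lemma \ref{lem:rf_Pos_multi_eq_power_X} and Corollary \ref{cor:rf_Pos_multi_eq_power_X_cancel}) and then cancels the two recursions term by term, rather than solving for $|\PP_+(\ppf)|$ and matching coefficients of $X(\ptt{q})$; the underlying computation is the same.
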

Recall that $X(\ptt{p}) = \prod_{i=1}^{n} \left( \absbig{ \pp_i } - 1 \right)!$ in Definition \ref{def:partition}.

\medskip
\begin{definition} \label{def:perturbed_passport}
    Fix a decomposable full passport $\ppf = (S, \one, \wt)$ with $\absbig{\ppf}=N$. A \textbf{perturbed passport} of $\ppf$ will be a non-decomposable one $\widetilde{\pp}_F = (S, \one, \widetilde{\wt})$ with the same index set. Its weight function $\widetilde{\wt}$ is defined by following steps.
    \begin{enumerate}
        \item Let 
        \[ E_0 = \min \defset{\absbig{\textstyle\sum_{s \in A} {\wt(s)}}}{A \subseteq S \text{ s.t. } \textstyle\sum_{s \in A} {\wt(s)} \neq 0}  ,\quad \varepsilon_0 = \frac{E_0}{N - 1}. \] 
        \item Choose $s_- \in S$ and a function $\varepsilon : S \setminus \{s_-\} \to (0, \varepsilon_0)$. 
        Then let 
        \[ \varepsilon(s_-) := - \textstyle\sum_{s \neq s_-}  {\varepsilon(s)} \in (- E_0 , 0) . \] 
        \item Define $\widetilde{\wt}(s) = \wt(s) + \varepsilon(s)$ for all $s \in S$.
    \end{enumerate}
\end{definition}

\begin{lemma} \label{lem:refined_passport_epsilon_function}
    For the function $\varepsilon(s)$ above, 
    \begin{enumerate}
        \item $\displaystyle\absbig{\sum_{s \in A} {\varepsilon(s)}} < E_0$ for any subset $A \subseteq S$;
        \item When $\varnothing \neq A \subsetneq S$, $\displaystyle\sum_{s \in A} \varepsilon(s) \neq 0$. And  
        $\displaystyle\sum_{s \in A} \varepsilon(s) > 0 \Leftrightarrow s_- \notin A$.
    \end{enumerate}
    Then $\widetilde{\pp}_F = (S, \one, \widetilde{\wt})$ is non-decomposable. 
\end{lemma}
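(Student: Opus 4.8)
The plan is to establish the two numbered claims by an elementary case analysis on whether $s_-\in A$, and then to deduce non-decomposability of $\widetilde{\pp}_F$ from them by exploiting the separation of scales built into the definition of $E_0$ and $\varepsilon_0$. Throughout I will use the characterization that a full passport is non-decomposable precisely when no non-empty proper subset of its index set has vanishing weight sum: a partition into $\geq 2$ parts forces each part, being a subpassport, to have zero weight, and conversely any non-empty proper subset $S'$ with $\sum_{s\in S'}\wt(s)=0$ gives a $2$-partition $\{\pp(S'),\pp(S\setminus S')\}$ since the complementary sum also vanishes.

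For claim (1), I would first treat $s_-\notin A$: then $A\subseteq S\setminus\{s_-\}$ has at most $N-1$ elements and every $\varepsilon(s)$ with $s\in A$ lies in $(0,\varepsilon_0)$, so $0\leq\sum_{s\in A}\varepsilon(s)<(N-1)\varepsilon_0=E_0$, the inequality being strict because each summand is strictly below $\varepsilon_0$. For $s_-\in A$ I would use the defining relation $\varepsilon(s_-)=-\sum_{s\neq s_-}\varepsilon(s)$ to rewrite $\sum_{s\in A}\varepsilon(s)=-\sum_{s\in B}\varepsilon(s)$ with $B:=(S\setminus\{s_-\})\setminus A\subseteq S\setminus\{s_-\}$, reducing to the previous case applied to $B$ and giving $\bigl|\sum_{s\in A}\varepsilon(s)\bigr|=\sum_{s\in B}\varepsilon(s)\in[0,E_0)$. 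Claim (2) drops out of the same two cases: if $\varnothing\neq A\subsetneq S$ and $s_-\notin A$, then $A$ is a non-empty subset of $S\setminus\{s_-\}$, so $\sum_{s\in A}\varepsilon(s)>0$; if $s_-\in A$, then properness of $A$ forces $B$ to be non-empty, so $\sum_{s\in A}\varepsilon(s)=-\sum_{s\in B}\varepsilon(s)<0$. This shows in one stroke that the sum never vanishes and that its sign detects membership of $s_-$.

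To conclude that $\widetilde{\pp}_F$ is non-decomposable I would first check it is a legitimate full passport: $\sum_{s\in S}\widetilde{\wt}(s)=\sum_{s\in S}\wt(s)+\sum_{s\in S}\varepsilon(s)=0+0=0$, and $\widetilde{\wt}(s)\neq 0$ for every $s$ since $|\wt(s)|\geq E_0$ (each singleton has non-zero weight sum) while $|\varepsilon(s)|<E_0$ by claim (1); in fact $\widetilde{\wt}$ keeps the sign of $\wt$ at every index. Then, for an arbitrary $\varnothing\neq S'\subsetneq S$, I split on $\sum_{s\in S'}\wt(s)$. If it is non-zero, its absolute value is $\geq E_0$ by definition of $E_0$, whereas $\bigl|\sum_{s\in S'}\varepsilon(s)\bigr|<E_0$ by claim (1), so $\sum_{s\in S'}\widetilde{\wt}(s)\neq 0$. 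If it is zero, then $\sum_{s\in S'}\widetilde{\wt}(s)=\sum_{s\in S'}\varepsilon(s)\neq 0$ by claim (2). In either case no non-empty proper subset has vanishing $\widetilde{\wt}$-sum, so $\widetilde{\pp}_F$ is non-decomposable.

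I do not expect a genuine obstacle: the argument is a scale-separation estimate plus bookkeeping. The two points needing a little care are getting the \emph{strict} cardinality bound $|A|\leq N-1$ (hence the strict $<E_0$) when $s_-\notin A$, and making explicit the combinatorial characterization of non-decomposability invoked at the end. The lemma's value lies less in its difficulty than in its role as the device that lets the later sections transport the already-proved non-decomposable count to decomposable passports via a limiting/perturbation argument.
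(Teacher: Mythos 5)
Your proposal is correct and follows essentially the same route as the paper: the paper dismisses parts (1) and (2) as "easy to verify" (your case split on whether $s_-\in A$ is the natural verification), and its deduction of non-decomposability is exactly your scale-separation argument, splitting on whether $\sum_{s\in S'}\wt(s)$ vanishes. Your extra check that $\widetilde{\wt}(s)\neq 0$ for each $s$ (so that $\widetilde{\wt}$ genuinely maps into $\Rnz$) is a small point the paper glosses over, and is a welcome addition.
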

\begin{proof}
    Since $\displaystyle\sum_{s \in S} {\varepsilon(s)} = 0$, we know that $\displaystyle\sum_{s \in S} {\widetilde{\wt}(s)} = \sum_{s \in S} \wt(s) + \sum_{s \in S} \varepsilon(s) = 0$. Thus $\widetilde{\pp}_F$ is a passport indeed. 
    The two properties of $\varepsilon$ are easy to verify. 

    Now assume $A$ is a non-empty proper subset of $S$. 
    If $\sum_{s \in A} {\wt(s)} \neq 0$, then by Lemma \ref{lem:refined_passport_epsilon_function}.(1) and our choice of $E_0$, 
    \begin{equation*}
        \absbig{\sum_{s \in A} {\widetilde{\wt}(s)}} \geq \absbig{\sum_{s \in A} {\wt(s)}} - \absbig{\sum_{s \in A} {\varepsilon(s)}} > \absbig{\sum_{s \in A} {\wt(s)}} - E_0 \geq 0 \ .
    \end{equation*}
    If $\sum_{s \in A} {\wt(s)} = 0$, by Lemma \ref{lem:refined_passport_epsilon_function}.(2),  
    $\sum_{s \in A} {\widetilde{\wt}(s)} = \sum_{s \in A} {\varepsilon(s)} \neq 0$. Therefore, $\widetilde{\pp}_F$ is non-decomposable.
\end{proof}

\medskip
Now we fix a perturbed passport $\widetilde{\pp}_F$ of the decomposable passport $\ppf$. Since they share the same index set $S$, $\PP(\widetilde{\pp}_F)$ is naturally identified with $\PP(\ppf)$.

\begin{lemma} \label{lem:Pos_refined_passport_and_formula}
    Let 
    \[ \PP_{\geq0}(\pp_F) := \defset{\perm \in \PP(\ppf)}{H_i\geq0,\ \forall i\in[N-1]} . \]    
    \begin{enumerate}
        \item $\PP_+(\widetilde{\pp}_F) \subseteq \PP_{\geq 0}(\ppf)$. 
        \item 
        \begin{equation}\label{eq:pos_pertubed}
            \PP_+(\widetilde{\pp}_F) \ \cong \ 
            \bigsqcup_{n=1}^{m(\ppf)} \bigsqcup_{\substack{\ptt{p}^o \in \Ptt^o_n(\ppf) \\ s_- \in S_n}} {\prod_{i = 1}^{n} \PP_+(\pp_i)} \ .
        \end{equation}
        Here $\ptt{p}^o = (\pp_i)_{i = 1}^n \in \Ptt^o_n(\ppf)$ with $\pp_i = \pp(S_i) \subset \ppf$ . 
        \item 
        \begin{equation} \label{eq:Pos_refined_passport}
            (\absbig{\ppf} - 1)! \  = \sum_{\ptt{p} \in \Ptt(\ppf)} {(\absbig{\ptt{p}} - 1)! \prod_{i = 1}^{\absbig{\ptt{p}}} {\absbig{\PP_+(\pp_i)}}} .
        \end{equation}
    \end{enumerate}
\end{lemma}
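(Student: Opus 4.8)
The three parts build on one another, so the plan is to establish them in the stated order. Fix the perturbed passport $\widetilde{\pp}_F$ as in Definition \ref{def:perturbed_passport}, write $N = |\ppf|$, and for $\perm = (s_i)_{i=1}^N \in \PP(\ppf) = \PP(\widetilde{\pp}_F)$ let $H_i = \sum_{j \leq i} \wt(s_j)$ and $\widetilde{H}_i = \sum_{j\leq i}\widetilde{\wt}(s_j) = H_i + \sum_{j\leq i}\varepsilon(s_j)$. For part (1): if $\perm \in \PP_+(\widetilde{\pp}_F)$ but $H_i < 0$ for some $i \in [N-1]$, then $\{s_1,\dots,s_i\}$ has nonzero $\wt$-sum $H_i$, so $|H_i| \geq E_0$ by the choice of $E_0$; combined with $|\sum_{j\leq i}\varepsilon(s_j)| < E_0$ from Lemma \ref{lem:refined_passport_epsilon_function}.(1) this gives $\widetilde{H}_i < -E_0 + E_0 = 0$, contradicting positivity. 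Hence $H_i \geq 0$ on $[N-1]$, i.e. $\perm \in \PP_{\geq0}(\ppf)$.

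For part (2) I would build mutually inverse maps realizing the claimed bijection. Given $\perm \in \PP_+(\widetilde{\pp}_F)$, part (1) allows cutting $\perm$ at the zeros of $H$: write $\{0 = i_0 < i_1 < \dots < i_n = N\} = \{0,N\} \cup \{i \in [N-1] : H_i = 0\}$ and set $\perm_k = (s_{i_{k-1}+1},\dots,s_{i_k})$ with index set $S_k$. Each $\pp(S_k)$ is a subpassport since $\sum_{s\in S_k}\wt(s) = H_{i_k} - H_{i_{k-1}} = 0$, and the partial sums of $\perm_k$ are (using $H_{i_{k-1}}=0$) the values $H_{i_{k-1}+m}$, which are strictly positive for $m$ below the block length because those indices sit strictly between two consecutive zeros of $H$; thus $\perm_k \in \PP_+(\pp(S_k))$ and $(\pp(S_1),\dots,\pp(S_n))$ is an ordered $n$-partition, with $n \leq m(\ppf)$. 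Furthermore $s_- \in S_n$: otherwise $s_- \in S_k$ for some $k < n$, and $A = S_1 \cup \dots \cup S_k$ is a nonempty proper subset containing $s_-$, so $\widetilde{H}_{i_k} = \sum_{s\in A}\varepsilon(s) < 0$ by Lemma \ref{lem:refined_passport_epsilon_function}.(2) although $i_k \in [N-1]$, a contradiction. Conversely, given an ordered $n$-partition $(\pp_1,\dots,\pp_n)$ with $s_- \in S_n$ and permutations $\perm_i \in \PP_+(\pp_i)$, I concatenate to $\perm = \perm_1 \cdots \perm_n$ and verify $\perm \in \PP_+(\widetilde{\pp}_F)$: at an index $i$ interior to some block, $H_i > 0$ is a nonzero $\wt$-subset sum, hence $H_i \geq E_0$, so $\widetilde{H}_i > 0$ exactly as in part (1); at a block boundary $i = i_k$ with $k < n$, $H_i = 0$ and $A = S_1 \cup \dots \cup S_k$ omits $s_-$, so $\widetilde{H}_i = \sum_{s\in A}\varepsilon(s) > 0$ by Lemma \ref{lem:refined_passport_epsilon_function}.(2). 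The two maps are inverse to each other since the zero set of $H$ for a concatenation is precisely the set of block boundaries, which yields \eqref{eq:pos_pertubed}.

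For part (3): since $\widetilde{\pp}_F$ is non-decomposable with $|\widetilde{\pp}_F| = N$, Corollary \ref{cor:PT_pos_nondecomp} gives $|\PP_+(\widetilde{\pp}_F)| = (N-1)!$. Taking cardinalities in \eqref{eq:pos_pertubed} and grouping the ordered partitions by their underlying unordered partition $\ptt{p} \in \Ptt(\ppf)$: exactly one block of $\ptt{p}$ contains $s_-$ and must be placed last, while the remaining $|\ptt{p}| - 1$ blocks may be ordered freely, so $(|\ptt{p}| - 1)!$ admissible ordered partitions lie over each $\ptt{p}$; since $\prod_i |\PP_+(\pp_i)|$ depends only on $\ptt{p}$, the double sum collapses to $\sum_{\ptt{p}\in\Ptt(\ppf)}(|\ptt{p}|-1)!\prod_{i=1}^{|\ptt{p}|}|\PP_+(\pp_i)|$, which is \eqref{eq:Pos_refined_passport}.

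The step I expect to be the main obstacle is the reverse direction of part (2), namely checking that concatenating block-positive permutations never produces a cumulative sum $\widetilde{H}_i \leq 0$. This is precisely where the design of the perturbation is used: a genuine nonzero partial sum of $\wt$ is bounded away from $0$ by $E_0$ whereas the accumulated perturbation stays strictly below $E_0$ (Lemma \ref{lem:refined_passport_epsilon_function}.(1)), so strict positivity of an interior $H_i$ is preserved under perturbation, and at the finitely many boundary indices where $H_i = 0$ the sign of $\widetilde{H}_i$ is forced by the ``$s_-$ in the last block'' condition through Lemma \ref{lem:refined_passport_epsilon_function}.(2). The remaining verifications are routine manipulations with cumulative sums.
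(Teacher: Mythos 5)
Your proposal is correct and follows essentially the same route as the paper: cut the permutation at the zeros of the unperturbed cumulative sum $H$, use the bound $E_0$ together with Lemma \ref{lem:refined_passport_epsilon_function} to show that strict positivity of interior $H_i$ survives perturbation while the sign of $\widetilde{H}_i$ at a zero of $H$ is governed by whether $s_-$ has already appeared, and then count $(|\ptt{p}|-1)!$ admissible orderings per unordered partition. The only cosmetic difference is that the paper first records the block decomposition of all of $\PP_{\geq0}(\ppf)$ and then carves out $\PP_+(\widetilde{\pp}_F)$ inside it, whereas you construct the two inverse maps directly; the content is identical.
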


\begin{proof}
    (1)
    For a permutation $\perm \in \PP(\ppf) \cong \PP(\widetilde{\pp}_F)$, define the cumulative sums as $H_i := \sum_{j=1}^{i} \wt(s_j)$ and $\widetilde{H}_i := \sum_{j=1}^{i} \widetilde{\wt}(s_j)$. 
    Assume $\perm \in \PP_+(\widetilde{\pp}_F)$, then with Lemma \ref{lem:refined_passport_epsilon_function}.(1),
    $H_i=\widetilde{H}_i - \sum_{j=1}^{i}\varepsilon(s_j) > 0 - E_0$ for all $i\in[N-1]$. By the choice of $E_0$ in Definition \ref{def:perturbed_passport}, 
    $H_i > - E_0$ implies $H_i \geq 0$.

    \medskip
    (2)
    By studying the connectedness of the region $R(\perm)$, we easily see
    \[ \PP_{\geq0}(\ppf) \ \cong\ \bigsqcup_{n=1}^{m(\ppf)} \bigsqcup_{\ptt{p}^o \in \Ptt^o_n(\ppf)} {\prod_{i = 1}^{n} \PP_+(\pp_i)} . \]

    Now assume $\perm \in \PP_{\geq0}(\ppf)$ is identified with $(\perm_1,\cdots,\perm_n)$, where $\perm_i\in \PP_+(\pp_i)$, $\pp_i = \pp(S_i)$ for $1\leq i \leq n$, and $\ptt{p}^o:=(\pp_1,\cdots,\pp_n) \in \Ptt_n^o(\ppf)$. 
    Let $H_i, \widetilde{H}_i$ as above. Then with similar arguments, we see that $H_i > 0$ implies $\widetilde{H}_i > 0$. 
    So we only need to consider the subscript set
    \[ I:=\{ |S_1|, |S_1\sqcup S_2|, \cdots, |S_1\sqcup\dots S_{n-1}| \} = \defset{i\in[N-1]}{H_i=0} .\]
    It remains to prove that
    \begin{equation*}
        \widetilde{H}_{i} > 0 \textrm{ for all } i\in I \quad \Longleftrightarrow \quad s_- \in S_n \quad .
    \end{equation*}
    For each $1\leq k \leq n-1$, let $i_k:=|S_1 \sqcup\dots S_k|\in I$. Then $\widetilde{H}_{i_k} = \sum_{s\in S_1\sqcup\dots S_k} \varepsilon(s)$. 
    By Lemma \ref{lem:refined_passport_epsilon_function}.(2), it is positive if and only if $s_- \notin S_1 \sqcup\dots S_k$. 
    This happens for all $1\leq k \leq n-1$ if and only if $s_- \in S_n$.

    \medskip
    (3) 
    This is a direct enumeration of \eqref{eq:pos_pertubed}. 
    Since $\widetilde{\pp}_F$ is non-decomposable, 
    $|{\PP_+(\widetilde{\pp}_F)}| = (|\ppf|-1)!$ by Corollary \ref{cor:PT_pos_nondecomp}.
    
    On the other hand, given an $n$-partition $\ptt{p}\in\Ptt_n(\ppf)$, there are exactly $(\absbig{\ptt{p}} - 1)!$ choices of ordered version satisfying $s_- \in S_n$. 
\end{proof}

For enumeration of $\PP_+(\ppf)$, we introduce the formal function on passport
\begin{equation}\label{eq:NPP+}
    NP_+(\ppf) := \sum_{\ptt{p} \in \Ptt(\ppf)} {(-1)^{\absbig{\ptt{p}} - 1} X(\ptt{p})}
\end{equation}
and study its properties. 
Then we prove $\absbig{\PP_+(\ppf)} = NP_+(\ppf)$. Hence $\absbig{\PP_+(\ppf)}$ will possess the same properties as the function $NP_+(\ppf)$. 
Some combinatorial quantities will be used next.

Let $x$ be a real variable. The \textbf{falling and rising factorials} are defined as 
\newcommand{\ffact}[2]{\big( #1 \big)_{#2}}
\newcommand{\rfact}[2]{#1 ^{( #2 )} }
\begin{equation}
    \ffact{x}{n}:=x(x-1)\cdots(x-n+1), \quad \rfact{x}{n}:=x(x+1)\cdots(x+n-1) .
\end{equation}
\textbf{Stirling number of the second kind} $S(n,m)$ is the number of $m$-partitions of an $n$-element set. When $n\in\Zpos$, a relation between them is given by
\begin{equation}\label{eq:stirling2_factorial}
    x^n = \sum_{k=1}^{n} S(n,k) \ffact{x}{k} \quad \Longleftrightarrow \quad (-x)^n=\sum_{k=1}^{n} S(n,k) (-1)^{k} \rfact{x}{k} .
\end{equation}

\begin{lemma} \label{lem:rf_Pos_multi_eq_power_X}
    Let $x$ be a real variable and $NP_+(\ppf)$ defined as \eqref{eq:NPP+}. Then 
    \begin{equation} \label{eq:rf_Pos_multi_eq_power_X}
        \sum_{\ptt{p} \in \Ptt(\ppf)} \rfact{x}{|\ptt{p}|} \prod_{i = 1}^{\absbig{\ptt{p}}} {NP_+(\pp_i)} = \sum_{\ptt{p} \in \Ptt(\ppf)} {x^{\absbig{\ptt{p}}} X(\ptt{p})}
    \end{equation}
\end{lemma}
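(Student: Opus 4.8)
The plan is to prove the identity \eqref{eq:rf_Pos_multi_eq_power_X} by reducing both sides to a common sum over \emph{refinements} of partitions of $\ppf$, then matching coefficients combinatorially. The right-hand side is the simpler object: $\sum_{\ptt{p}} x^{|\ptt{p}|} X(\ptt{p})$. The left-hand side involves the rising factorial $\rfact{x}{|\ptt{p}|}$, which I will expand into monomials in $x$ using the Stirling numbers. So first I would rewrite $\rfact{x}{n} = \sum_{k=1}^{n} s(n,k)\, x^k$ where $s(n,k)$ are the \emph{unsigned} Stirling numbers of the first kind (equivalently, rearranging \eqref{eq:stirling2_factorial} — though note the paper only states the second-kind relation, so I may need to either invoke first-kind Stirling numbers directly or, better, avoid them by a cleaner route; see below). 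The coefficient of $x^k$ on the left is then $\sum_{\ptt{p}} s(|\ptt{p}|, k) \prod_i NP_+(\pp_i)$, and on the right it is $\sum_{\ptt{p}:\,|\ptt{p}|=k} X(\ptt{p})$.

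\emph{The cleaner route}, which I expect to be the actual proof, uses the multiplicativity of $NP_+$ under refinement together with the defining formula \eqref{eq:NPP+}. By Definition \ref{def:finer_partition}, $X(\ptt{q}) = \prod_i X(\ptt{p}_i)$ when $\ptt{q} = \bigsqcup \ptt{p}_i$ refines $\ptt{p} = \{\pp_i\}$, and I would first establish the analogous identity
\[
    \prod_{i=1}^{n} NP_+(\pp_i) = \sum_{\ptt{q} \preccurlyeq \ptt{p}} (-1)^{|\ptt{q}| - n} \prod_{j=1}^{|\ptt{q}|} X(\pp'_j),
\]
where $\ptt{q} = \{\pp'_j\}$, simply by expanding each factor $NP_+(\pp_i)$ via \eqref{eq:NPP+} and using that a refinement of $\ptt{p}$ is exactly a choice of partition of each $\pp_i$. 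Substituting this into the left-hand side of \eqref{eq:rf_Pos_multi_eq_power_X} and interchanging the order of summation (sum first over the fine partition $\ptt{q}$, then over the coarse $\ptt{p} \succcurlyeq \ptt{q}$), the left-hand side becomes
\[
    \sum_{\ptt{q} \in \Ptt(\ppf)} \left( \prod_{j=1}^{|\ptt{q}|} X(\pp'_j) \right) \sum_{\ptt{p}\, \succcurlyeq\, \ptt{q}} \rfact{x}{|\ptt{p}|} (-1)^{|\ptt{q}| - |\ptt{p}|}.
\]
Comparing with the right-hand side $\sum_{\ptt{q}} x^{|\ptt{q}|} X(\ptt{q})$, it suffices to prove the purely numerical identity: for every fine partition $\ptt{q}$ with $|\ptt{q}| = m$,
\[
    \sum_{\ptt{p}\, \succcurlyeq\, \ptt{q}} (-1)^{m - |\ptt{p}|} \rfact{x}{|\ptt{p}|} = x^m.
\]

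\emph{The key combinatorial fact} is that the coarsenings $\ptt{p}$ of a fixed $m$-block partition $\ptt{q}$ are in bijection with partitions of an $m$-element set (the blocks of $\ptt{q}$), and the number of coarsenings with exactly $k$ blocks is the Stirling number $S(m,k)$ of the second kind. Therefore the numerical identity reads $\sum_{k=1}^{m} (-1)^{m-k} S(m,k) \rfact{x}{k} = x^m$, which is precisely the second equivalent form of \eqref{eq:stirling2_factorial} (after substituting $x \mapsto -x$ and multiplying by $(-1)^m$: the relation $(-x)^m = \sum_k S(m,k)(-1)^k \rfact{x}{k}$ gives $x^m = \sum_k S(m,k)(-1)^{k-m}\rfact{x}{k}$). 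So I would cite \eqref{eq:stirling2_factorial} directly here. The main obstacle — really the only delicate point — is bookkeeping the double sum interchange and the sign $(-1)^{|\ptt{q}|-|\ptt{p}|}$ correctly, in particular confirming that when $\ptt{p}$ has blocks $\pp_i$ and we refine each to get $\ptt{q}$, the global sign from $\prod_i NP_+(\pp_i)$ is $\prod_i (-1)^{|\ptt{p}_i|-1} = (-1)^{|\ptt{q}|-|\ptt{p}|}$, which matches. Everything else is a routine reindexing, so once the refinement–multiplicativity identity for $NP_+$ is in hand the proof closes immediately via \eqref{eq:stirling2_factorial}.
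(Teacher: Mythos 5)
Your ``cleaner route'' is exactly the paper's proof: expand each $NP_+(\pp_i)$ via \eqref{eq:NPP+} so that the product over blocks becomes a sum over refinements $\ptt{q}\preccurlyeq\ptt{p}$ with sign $(-1)^{|\ptt{q}|-|\ptt{p}|}$, swap the order of summation, identify the coarsenings of a fixed $m$-block $\ptt{q}$ with set partitions counted by $S(m,k)$, and close with \eqref{eq:stirling2_factorial}. The argument is correct (your $\prod_j X(\pp'_j)$ should be read as $\prod_j(|\pp'_j|-1)! = X(\ptt{q})$, a harmless abuse of notation), and no further comment is needed.
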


\begin{proof}

    Given $\ptt{p}=\{\pp_i\}_{i=1}^n \in \Ptt(\ppf)$ and 
    $\ptt{p_i}\in\Ptt(\pp_i)$, $1 \leq i \leq n$, 
    then $\ptt{q}:=\bigsqcup_{i=1}^n \ptt{p}_i$ is obviously a partition of $\ppf$ finer than $\ptt{p}$. So
    \begin{align*}
        \textrm{LHS of \eqref{eq:rf_Pos_multi_eq_power_X}} 
        =& \sum_{\ptt{p} \in \Ptt(\ppf)} {\rfact{x}{|\ptt{p}|} \prod_{i = 1}^{\absbig{\ptt{p}}} { \sum_{\ptt{p}_i \in \Ptt(\pp_i)} {(-1)^{|\ptt{p}_i| - 1} X(\ptt{p}_i)} }} \\
        =& \sum_{\ptt{p} \in \Ptt(\ppf)} {\rfact{x}{|\ptt{p}|} \sum_{\substack{\ptt{q} \in \Ptt(\ppf) \\ \ptt{q} \preccurlyeq \ptt{p}}} {(-1)^{\absbig{\ptt{q}} - \absbig{\ptt{p}}} } X(\ptt{q})} \\ 
        =& \sum_{\ptt{q} \in \Ptt(\ppf)} (-1)^{|\ptt{q}|} X(\ptt{q}) \sum_{\substack{\ptt{p} \in \Ptt(\ppf) \\ \ptt{q} \preccurlyeq \ptt{p}}} 
        (-1)^{|\ptt{p}|} \rfact{x}{|\ptt{p}|} .
    \end{align*}
    For a given partition $\ptt{q}\in\Ptt(\ppf)$ and $1 \leq k \leq |\ptt{q}|$, a $k$-partition coarser than $\ptt{q}$ corresponds to a $k$-partition of $\ptt{q}$ as a set. Hence
    \[ \absbig{ \defset {\ptt{p}\in\Ptt_k(\ppf)} {\ptt{q}\preccurlyeq\ptt{p} } } = S(|\ptt{q}|,k)  \]
    and 
    \begin{align*}
        \sum_{\substack{\ptt{p} \in \Ptt(\ppf) \\ \ptt{q} \preccurlyeq \ptt{p}}} 
        (-1)^{|\ptt{p}|} \rfact{x}{|\ptt{p}|} 
        &= \sum_{k=1}^{|\ptt{q}|} \sum_{\substack{\ptt{p} \in \Ptt_k(\ppf) \\ \ptt{q} \preccurlyeq \ptt{p}}} (-1)^{k} \rfact{x}{k} \\
        &= \sum_{k=1}^{|\ptt{q}|} S(|\ptt{q}|,k) (-1)^{k} \rfact{x}{k}
        \xlongequal{\eqref{eq:stirling2_factorial}} (-x)^{|\ptt{q}|} .
    \end{align*}
    Then \eqref{eq:rf_Pos_multi_eq_power_X} is obtained. 
\end{proof}

\begin{corollary} \label{cor:rf_Pos_multi_eq_power_X_cancel}
    \begin{equation} \label{eq:rf_Pos_multi_eq_power_X_particular}
        \sum_{\ptt{p} \in \Ptt(\ppf)} {(|\ptt{p}| - 1)! \prod_{i = 1}^{|\ptt{p}|} {NP_+(\pp_i)}} = (\absbig{\ppf} - 1)! \ .
    \end{equation}
\end{corollary}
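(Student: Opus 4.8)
The plan is to deduce \eqref{eq:rf_Pos_multi_eq_power_X_particular} from the polynomial identity \eqref{eq:rf_Pos_multi_eq_power_X} of Lemma \ref{lem:rf_Pos_multi_eq_power_X} by comparing the coefficients of the linear term in $x$. Both sides of \eqref{eq:rf_Pos_multi_eq_power_X} are polynomials in the real indeterminate $x$, with coefficients assembled from the quantities $NP_+(\cdot)$ and $X(\cdot)$, so extracting the coefficient of $x^{1}$ is legitimate.

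First I would record the coefficient of $x$ in a single rising factorial: since $\rfact{x}{n} = x(x+1)\cdots(x+n-1)$, the term of lowest degree is $x$, with coefficient $1\cdot 2\cdots (n-1) = (n-1)!$; equivalently $\frac{d}{dx}\rfact{x}{n}\big|_{x=0} = (n-1)!$. Hence the coefficient of $x$ on the left-hand side of \eqref{eq:rf_Pos_multi_eq_power_X} is
\[ \sum_{\ptt{p}\in\Ptt(\ppf)} (\absbig{\ptt{p}}-1)!\,\prod_{i=1}^{\absbig{\ptt{p}}} NP_+(\pp_i), \]
which is exactly the left-hand side of \eqref{eq:rf_Pos_multi_eq_power_X_particular}.

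Next I would read off the coefficient of $x$ on the right-hand side $\sum_{\ptt{p}\in\Ptt(\ppf)} x^{\absbig{\ptt{p}}} X(\ptt{p})$: only partitions of length $1$ produce an $x^{1}$ term, and the unique such partition is the trivial one $\ptt{e}=\{\ppf\}$, for which $X(\ptt{e}) = (\absbig{\ppf}-1)!$. So this coefficient equals $(\absbig{\ppf}-1)!$, and equating the two coefficients yields \eqref{eq:rf_Pos_multi_eq_power_X_particular}.

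There is essentially no obstacle here, since Lemma \ref{lem:rf_Pos_multi_eq_power_X} carries the entire load; the only point demanding care is correctly identifying the linear coefficient of the rising factorial, which is a one-line computation. As a sanity check, one could instead differentiate \eqref{eq:rf_Pos_multi_eq_power_X} once in $x$ and evaluate at $x=0$, obtaining the same identity. (Downstream, comparing this with \eqref{eq:Pos_refined_passport} is what will let one conclude $\absbig{\PP_+(\ppf)} = NP_+(\ppf)$ by induction on $m(\ppf)$.)
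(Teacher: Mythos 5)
Your proof is correct and is essentially the paper's own argument: the paper cancels a factor of $x$ from both sides of \eqref{eq:rf_Pos_multi_eq_power_X} and evaluates at $x=0$, which is exactly your extraction of the coefficient of $x^{1}$, using that the linear coefficient of $\rfact{x}{n}$ is $(n-1)!$ and that only the trivial partition $\ptt{e}$ contributes a linear term on the right.
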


\begin{proof}
    Since $|\ptt{p}| \geq 1$ for any $\ptt{p} \in \Ptt(\ppf)$, $\rfact{x}{|\ptt{p}|} = x \cdot \rfact{(x + 1)}{|\ptt{p}| - 1}$. 
    By canceling the common factor $x$ from both sides of \eqref{eq:rf_Pos_multi_eq_power_X}, we obtain 
    \[ \sum_{\ptt{p} \in \Ptt(\ppf)} \rfact{(x + 1)}{|\ptt{p}| - 1} \prod_{i = 1}^{|\ptt{p}|} {NP_+(\pp_i)} = \sum_{\ptt{p} \in \Ptt(\ppf)} {x^{|\ptt{p}| - 1} X(\ptt{p})} \ . \]
    Now let $x=0$, then $\rfact{1}{|\ptt{p}|-1} = (|\ptt{p}|-1)!$. And $x^{|\ptt{p}| - 1} X(\ptt{p}) \neq 0 \Leftrightarrow \absbig{\ptt{p}} = 1 \Leftrightarrow \ptt{p} = \ptt{e} $. By Definition \ref{def:partition}.2, $X(\ptt{e}) = (|\ppf|-1)!$ 
\end{proof}

\medskip
Now we are ready to enumerate $\PP_+(\ppf)$.
\begin{proof}[Proof of Theorem \ref{thm:number_Pos}]
    We shall prove $\absbig{\PP_+(\ppf)} = NP_+(\ppf)$ by strong induction on the maximal partition length $m(\ppf)$. 
    When $m(\ppf) = 1$, $\ppf$ is non-decomposable. By Corollary \ref{cor:PT_pos_nondecomp}, 
    \[ \absbig{\PP_+(\ppf)} = 
    (\absbig{\ppf}-1)! = X(\ptt{e}) = NP_+(\ppf) . \]
    Now assume $m(\ppf)>1$.
    For each partition $\ptt{p} = \{\pp_i\}_{i=1}^n$ whose length $\absbig{\ptt{p}} > 1$, it is easy to see $m(\pp_i) < m(\ppf), \forall\  1 \leq i \leq n$. 
    Now apply the induction hypothesis on \eqref{eq:Pos_refined_passport}, we see  
    \begin{equation*}
        (\absbig{\ppf} - 1)! = \absbig{\PP_+(\ppf)} + \sum_{\substack{\ptt{p} \in \Ptt(\ppf) \\ |\ptt{p}| > 1}} {(|\ptt{p}| - 1)! \prod_{i = 1}^{|\ptt{p}|} {NP_+(\pp_i)}} .
    \end{equation*}
    Comparing this with \eqref{eq:rf_Pos_multi_eq_power_X_particular}, we immediately solve  $\absbig{\PP_+(\ppf)} = NP_+(\ppf)$.
\end{proof}

\bigskip
\subsection{Enumerating positive tree permutations}\label{ssec:decomp_count} 
Finally we are ready to enumerate $\PT_+(\ppf)$. This completes the proof of Theorem \ref{thm:passport_simple}. 

\begin{theorem} \label{thm:number_PosT}
    \begin{equation} \label{eq:number_PosT}
        \absbig{\PT_+(\ppf)} = \sum_{\ptt{p} \in \Ptt(\ppf)} {(-1)^{\absbig{\ptt{p}} - 1} (\absbig{\ppf} - 1)^{\absbig{\ptt{p}} - 1} X(\ptt{p})} \ .
    \end{equation}
\end{theorem}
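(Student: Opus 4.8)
The plan is to introduce the formal quantity
\[
    NPT_+(\ppf) \;:=\; \sum_{\ptt{p} \in \Ptt(\ppf)} (-1)^{|\ptt{p}|-1}\,(|\ppf|-1)^{|\ptt{p}|-1}\, X(\ptt{p})
\]
— the right-hand side of \eqref{eq:number_PosT} — and to prove $\absbig{\PT_+(\ppf)} = NPT_+(\ppf)$ by strong induction on the maximal partition length $m(\ppf)$, mirroring the proof of Theorem \ref{thm:number_Pos}. When $m(\ppf)=1$ the passport is non-decomposable, $\Ptt(\ppf)=\{\ptt{e}\}$, and both sides equal $(|\ppf|-1)!$ by Corollary \ref{cor:PT_pos_nondecomp}. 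For the inductive step let $\ppf$ be decomposable. Every block of a $2$-partition of $\ppf$ has strictly smaller $m$, so the induction hypothesis yields $\absbig{\PT_+(\pp_0)} = NPT_+(\pp_0)$ for every proper subpassport $\pp_0$ of $\ppf$; feeding this, along with $\absbig{\PP_+(\cdot)} = NP_+(\cdot)$ from Theorem \ref{thm:number_Pos} (applied to $\ppf$ and to every $\pp_i$), into the recursion of Proposition \ref{prop:horizontal_division}, the target equality $\absbig{\PT_+(\ppf)} = NPT_+(\ppf)$ becomes equivalent to the purely formal identity
\begin{multline*}
    NP_+(\ppf) \;=\; NPT_+(\ppf) \\
    +\; \sum_{(\pp_0,\pp_+)\in \Ptt_2^o} NPT_+(\pp_0) \sum_{\ptt{p}_+ \in \Ptt(\pp_+)} \frac{(|\pp_0|+|\ptt{p}_+|-2)!}{(|\pp_0|-2)!} \prod_{i=1}^{|\ptt{p}_+|} NP_+(\pp_i)\,,
\end{multline*}
and this identity is where the real work sits.

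To establish it I would first note that $\frac{(|\pp_0|+n-2)!}{(|\pp_0|-2)!}$ is the rising factorial $\rfact{(|\pp_0|-1)}{n}$, so for a fixed first block $\pp_0$ the inner double sum is exactly the left-hand side of Lemma \ref{lem:rf_Pos_multi_eq_power_X} for the passport $\pp_+$ specialized at $x=|\pp_0|-1$; hence it equals $\sum_{\ptt{p}_+\in\Ptt(\pp_+)} (|\pp_0|-1)^{|\ptt{p}_+|}\, X(\ptt{p}_+)$. The degenerate choice ``$\pp_0=\ppf$, $\pp_+$ empty'' reproduces the stray term $NPT_+(\ppf)$, so after absorbing it the right-hand side reads $\sum_{S_0} NPT_+(\pp(S_0)) \sum_{\ptt{p}_+\in\Ptt(\pp(S\setminus S_0))} (|S_0|-1)^{|\ptt{p}_+|}\, X(\ptt{p}_+)$, the outer sum over all non-empty $S_0\subseteq S$ that index a subpassport (so $|S_0|\geq 2$ automatically, since $\wt$ vanishes nowhere). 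Expanding $NPT_+(\pp(S_0))$ by its definition and using $X(\ptt{r})\,X(\ptt{p}_+)=X(\ptt{r}\sqcup\ptt{p}_+)$, I regroup the resulting triple sum over $\bigl(S_0,\ \ptt{r}\in\Ptt(\pp(S_0)),\ \ptt{p}_+\in\Ptt(\pp(S\setminus S_0))\bigr)$ as a sum over pairs $(\ptt{t},A)$, where $\ptt{t}:=\ptt{r}\sqcup\ptt{p}_+\in\Ptt(\ppf)$ and $A$ is the non-empty set of blocks of $\ptt{t}$ whose union is $S_0$ — this correspondence is a bijection. Writing $k:=|\ptt{t}|$ and $\sigma_A$ for the combined size of the blocks in $A$, the right-hand side becomes
\[
    \sum_{\ptt{t}\in\Ptt(\ppf)} X(\ptt{t}) \sum_{\varnothing\neq A\subseteq\,\mathrm{blocks}(\ptt{t})} (-1)^{|A|-1}\,(\sigma_A-1)^{k-1}\,,
\]
so, since $NP_+(\ppf)=\sum_{\ptt{t}}(-1)^{k-1}X(\ptt{t})$, the identity reduces — one block-size multiset at a time — to the elementary claim that $\sum_{\varnothing\neq A\subseteq[k]}(-1)^{|A|-1}(\sigma_A-1)^{k-1}=(-1)^{k-1}$ for all $k\geq 1$ and all positive integers $a_1,\dots,a_k$, equivalently $\sum_{A\subseteq[k]}(-1)^{|A|}(\sigma_A-1)^{k-1}=0$.

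This last equality is proved by expanding $(\sigma_A-1)^{k-1}$ with the binomial theorem and observing that $\sum_{A\subseteq[k]}(-1)^{|A|}\sigma_A^{\,m}=0$ for every $0\leq m\leq k-1$: in the monomial expansion of $\sigma_A^{\,m}=\bigl(\sum_j a_j[j\in A]\bigr)^{m}$ at most $m<k$ of the indices can be forced to lie in $A$, so some index stays free and its two $\pm1$ contributions cancel. I expect the genuine obstacle to be the bookkeeping in the middle step — faithfully matching the geometric decomposition data $(\pp_0,\pp_+,\ptt{p}_+)$ of Proposition \ref{prop:horizontal_division} with pairs (partition of $\ppf$, distinguished set of its blocks), and checking the degenerate cases (the distinguished block being all of $\ppf$, so $\pp_+$ is empty; the automatic bound $|\pp_0|\geq 2$, which makes every factorial and rising factorial that appears well-defined) — after which the Stirling-type cancellation is immediate. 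No idea beyond Lemma \ref{lem:rf_Pos_multi_eq_power_X} and this cancellation seems to be needed: in effect one reruns the proof of Theorem \ref{thm:number_Pos} while carrying the extra factor $|\pp_0|-1$ along.
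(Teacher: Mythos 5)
Your proposal is correct and follows essentially the same route as the paper: strong induction on $m(\ppf)$, feeding Theorem \ref{thm:number_Pos} and Lemma \ref{lem:rf_Pos_multi_eq_power_X} (at $x=|\pp_0|-1$) into Proposition \ref{prop:horizontal_division}, regrouping the resulting sum over pairs consisting of a partition of $\ppf$ and a distinguished subset of its blocks, and killing the coefficient of each $X(\ptt{p})$ by the alternating subset-sum identity. The only cosmetic differences are that you absorb the stray $\absbig{\PT_+(\ppf)}$ term into the subset sum rather than treating the boundary subsets $A=\varnothing,[m]$ separately, and you prove the cancellation $\sum_{A\subseteq[k]}(-1)^{|A|}(\sigma_A-1)^{k-1}=0$ by direct monomial expansion instead of the paper's generating-function Lemma \ref{lem:subset_sum_power}.
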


The following lemma is used in the proof of Theorem \ref{thm:number_PosT}.

\begin{lemma} \label{lem:subset_sum_power}
    For $m \in \Zpos$, integer $0 \leq k \leq m-1$, real variables $y$ and 
    $\{x_i\}_{i = 1}^m$, we have
    \begin{equation}
        \sum_{A \subseteq [m]} {(-1)^{\absbig{A}} \left(y + \sum_{i \in A} {x_i}\right)^k} = 0
    \end{equation}
\end{lemma}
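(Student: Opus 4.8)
The plan is to recognize the alternating sum as an iterated finite-difference operator applied to the polynomial $t \mapsto t^{k}$, and then invoke the elementary fact that each difference strictly lowers the degree. Concretely, for a parameter $x$ introduce the operator $(\Delta_x f)(t) := f(t) - f(t+x)$ on polynomials in $t$; it annihilates constants, and if $\deg f = d \geq 1$ then $\deg \Delta_x f \leq d-1$. A direct expansion shows
\[
    \big(\Delta_{x_1}\Delta_{x_2}\cdots\Delta_{x_m} f\big)(y) \;=\; \sum_{A \subseteq [m]} (-1)^{|A|}\, f\!\left(y + \sum_{i \in A} x_i\right),
\]
so taking $f(t) = t^{k}$ with $k \leq m-1$ forces the right-hand side to be, as a polynomial in $y$, of degree at most $k - m < 0$, hence identically zero. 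Since this is a polynomial identity in $y$ and $x_1,\dots,x_m$, it holds for all real values.

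If one prefers to avoid operator language, I would instead run an induction on $m$. The base case $m = 1$ forces $k = 0$, and then the sum is $1 - 1 = 0$. For the inductive step, split the subsets $A \subseteq [m]$ according to whether $m \in A$: writing $z := y + \sum_{i \in A'} x_i$ for $A' := A \cap [m-1]$ and using the binomial expansion $z^{k} - (z+x_m)^{k} = -\sum_{j=0}^{k-1}\binom{k}{j} x_m^{\,k-j} z^{\,j}$, one gets
\[
    \sum_{A \subseteq [m]} (-1)^{|A|}\!\left(y + \sum_{i \in A} x_i\right)^{k} \;=\; -\sum_{j=0}^{k-1} \binom{k}{j} x_m^{\,k-j} \sum_{A' \subseteq [m-1]} (-1)^{|A'|}\!\left(y + \sum_{i \in A'} x_i\right)^{j}.
\]
For each term $j \leq k-1 \leq m-2 = (m-1)-1$, so the induction hypothesis (with $m-1$ in place of $m$) makes every inner sum vanish, completing the step.

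There is no genuine obstacle here; the only points deserving care are (i) treating the expression throughout as a polynomial identity in the independent indeterminates $y, x_1, \dots, x_m$, so that the degree-drop argument is legitimate, and (ii) keeping the index bookkeeping straight when peeling off the last coordinate $x_m$. Both arguments are short, so I would present the finite-difference formulation as the main line and mention the inductive computation as an alternative, using whichever reads more cleanly in context.
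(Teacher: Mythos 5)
Your proof is correct, and it takes a genuinely different route from the paper's. The paper proves this identity by an exponential generating function: it sets $f(t) := \ee^{yt}\prod_{i=1}^m (1-\ee^{x_i t})$, expands the product to recognize the sum in question as $k!\,[t^k]f$, and observes that the product form has a zero of order $m$ at $t=0$, so all Taylor coefficients of index $k\le m-1$ vanish. Your main line instead identifies the alternating sum as the $m$-fold finite difference $\Delta_{x_1}\cdots\Delta_{x_m}$ applied to $t^k$ and uses the degree-drop property of each $\Delta_x$; your fallback induction on $m$ (peeling off $x_m$ and expanding binomially) is a fully elementary rendering of the same degree-counting idea. The two arguments are of course cousins --- the factor $(1-\ee^{x_i t})$ is the generating-function shadow of the operator $\Delta_{x_i}$ --- but yours stays entirely within polynomial algebra and needs no analytic input (no Taylor expansion or order-of-vanishing argument), which some readers will find more transparent and which makes the ``identity in indeterminates'' status of the statement explicit. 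The paper's version buys brevity: three lines once the generating function is written down. Both are complete; the only cosmetic point in yours is that ``degree at most $k-m<0$'' should be read as ``the zero polynomial,'' which you clearly intend.
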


\begin{proof}
    Consider the generating function
    \begin{equation}
        f(t) := \ee^{y t} \prod_{i = 1}^m \left(1 - \ee^{x_i t}\right) = \sum_{A \subseteq [m]} {(-1)^{\absbig{A}} \cdot \ee^{\left(y + \sum_{i \in A} {x_i}\right) t}} \ .
    \end{equation}
    Using the latter expression, the coefficient of the $k$-th term in the Taylor expansion of $f(t)$ at $t=0$ is 
    \begin{equation}
        [t^k]f = \frac1{k!} \sum_{A \subseteq [m]} {(-1)^{\absbig{A}} \left(y + \sum_{i \in A} {x_i}\right)^k} \ .
    \end{equation}
    But according to the former expression, $f(t)$ has a zero of multiplicity $m$ at $t=0$.
    Thus $[t^k]f = 0$ for all integer $0 \leq k \leq m-1$. 
\end{proof}

\medskip
\begin{proof}[Proof of Theorem \ref{thm:number_PosT}]
    We apply strong induction on $m(\ppf)$ again. When $m(\ppf) = 1$, $\ppf$ is non-decomposable and the formula is just Corollary \ref{cor:PT_pos_nondecomp}. 
    So assume $m(\ppf)>1$ next. 

    We plan to compute $|\PT_+(\ppf)|$ by Proposition \ref{prop:horizontal_division}. 
    Note that the permutation number there is just rising factorial $(|\pp_0|-1)^{(|\ptt{p}_+|)}$. 
    For a given ordered 2-partition $(\pp_0,\pp_+) \in \Ptt_2^o(\ppf)$, applying Theorem \ref{thm:number_Pos} and Lemma \ref{lem:rf_Pos_multi_eq_power_X} on $\pp_+$ with $x=|\pp_0|-1$, we see
    \[ \sum_{\ptt{p}_+ \in \Ptt(\pp_+)} \rfact{(|\pp_0|-1)}{|\ptt{p}_+|} \prod_{i=1}^{|\ptt{p}_+|} \absbig{\PP_+(\pp_i)}  =   \sum_{\ptt{p}_+ \in \Ptt(\pp_+)} (|\pp_0|-1)^{|\ptt{p}_+|} X(\ptt{p}_+) . \]
    Together with the induction hypothesis, we have
    \begin{align*}
    & \sum_{(\pp_0,\pp_+)\in \Ptt_2^o} {\absbig{\PT_+(\pp_0)} \sum_{\ptt{p}_+ \in \Ptt(\pp_+)} \frac{(\absbig{\pp_0}+\absbig{\ptt{p}_+}-2)!}{(\absbig{\pp_0}-2)!} \prod_{i = 1}^{\absbig{\ptt{p}_+}} {\absbig{\PP_+(\pp_i)}}} \\
    =& \sum_{(\pp_0,\pp_+)\in \Ptt_2^o} \left(
    \sum_{\ptt{p}_0\in\Ptt(\pp_0)} (-1)^{|\ptt{p}_0|-1} (|\pp_0|-1)^{|\ptt{p}_0|-1} X(\ptt{p}_0)
    \right) \left(
    \sum_{\ptt{p}_+ \in \Ptt(\pp_+)} (|\pp_0|-1)^{|\ptt{p}_+|} X(\ptt{p}_+)
    \right) \\
    =& \sum_{(\pp_0,\pp_+)\in \Ptt_2^o} 
    \sum_{\substack{\ptt{p} = \ptt{p}_0\sqcup \ptt{p}_+ \\ \ptt{p}_0\in\Ptt(\pp_0) \\ \ptt{p}_+\in\Ptt(\pp_+)}} 
    (-1)^{|\ptt{p}_0|-1} (|\pp_0|-1)^{|\ptt{p}|-1} X(\ptt{p}) .
    \end{align*}
    Notice that only partitions of length greater than 1 contribute to the sum. 

    Now fix a partition $\ptt{p} = \{\pp_i\}_{i=1}^{m} \in \Ptt(\ppf)$ with $m>1$. Each ordered 2-partition $(\pp_0,\pp_+)$ coarser than $\ptt{p}$ corresponds to an ordered 2-partition of $\ptt{p}$ as a set, or a proper non-empty subset of $\ptt{p}$. Hence, the coefficient of $X(\ptt{p})$ in the above summation is
    \[ Co(\ptt{p}) := \sum_{\varnothing \ne A\subsetneq[m]} (-1)^{|A|-1} \bigg( \sum_{i \in A}|\pp_i|-1 \bigg)^{|\ptt{p}|-1} . \]
    
    Let $m=|\ptt{p}|,\ k=m-1,\ y=-1,\ x_i=|\pp_i|$ in Lemma \ref{lem:subset_sum_power}. Then we have
    \[ \sum_{A \subseteq [m]} (-1)^{|A|} \bigg( \sum_{i\in A} |\pp_i| - 1 \bigg) ^{|\ptt{p}|-1} = 0. \]
    Therefore, 
    \[ Co(\ptt{p}) = \sum_{A=\varnothing,[m]} (-1)^{|A|} \bigg( \sum_{i\in A} |\pp_i| - 1 \bigg) ^{|\ptt{p}|-1} 
    = (-1)^{|\ptt{p}|-1} + (-1)^{|\ptt{p}|}(|\ppf|-1)^{|\ptt{p}|-1} . \]

    Finally, substituting all the above results into Proposition \ref{prop:horizontal_division}, together with Theorem \ref{thm:number_Pos}, we see
    
    \begin{align*}
        |\PT_+(\ppf)| = & |\PP_+(\ppf)| - \sum_{\ptt{e}\neq\ptt{p}\in\Ptt(\ppf)} Co(\ptt{p}) X(\ptt{p}) \\
        = & \sum_{\ptt{p}\in\Ptt(\ppf)}(-1)^{|\ptt{p}|-1}X(\ptt{p}) - 
        \sum_{\ptt{e}\neq\ptt{p}\in\Ptt(\ppf)} \bigg((-1)^{|\ptt{p}|-1} + (-1)^{|\ptt{p}|}(|\ppf|-1)^{|\ptt{p}|-1} \bigg) X(\ptt{p}) \\
        = & \quad X(\ptt{e}) + \sum_{\ptt{e}\neq\ptt{p}\in\Ptt(\ppf)} \bigg( (-1)^{|\ptt{p}|-1}(|\ppf|-1)^{|\ptt{p}|-1} \bigg) X(\ptt{p}) \\
        = & \sum_{\ptt{p}\in\Ptt(\ppf)} (-1)^{|\ptt{p}|-1}(|\ppf|-1)^{|\ptt{p}|-1} X(\ptt{p}) .
    \end{align*}
\end{proof}

\begin{remark}\label{rmk:proof_without_in-ex}
Section \ref{sec:permutation} answers the last part of the problem ``\textsc{Enumeration according to a passport}'' in \cite[Chapter 12]{APZ2020}. However, for its first part, we suspect it is hard to prove Theorem \ref{thm:passport_simple} for decomposable cases without resorting to inclusion-exclusion. Here are some clues and comments. 

The alternating signs in \eqref{eq:enumeration_passport_decomposable} originates from those in formula \eqref{eq:number_Pos} of $\absbig{\PP_+(\ppf)}$. We remark that \eqref{eq:number_Pos} can also be proved by analyzing the cyclic shifts of a sequence, which is analogous to cycle lemma. 
From this perspective, the result is essentially an instance of the inclusion-exclusion principle. 

In contrast, if we consider a suitably enlarged set, the result will not contain alternating sign. Here are two examples. 
\begin{enumerate}
    \item We can prove the following enumeration formula for non-negative permutations (defined in Lemma \ref{lem:Pos_refined_passport_and_formula}): 
    $$\textstyle \absbig{\PP_{\geq0}(\pp_F)} = \sum_{\ptt{p} \in \Ptt(\ppf)} {X(\ptt{p})} \ .$$
    \item One can also enumerate LWBP-trees that allow zero-weighted edges: 
    $$\textstyle \sum_{\ptt{p} \in \Ptt(\ppf)} (\absbig{\ppf} - 1)^{\absbig{\ptt{p}} - 2} {X(\ptt{p})} \ .$$
    This formula is implicit in Tutte's result \cite{Tutte1970}.
\end{enumerate} 
\hfill $\square$
\end{remark}

\bigskip 
\bibliographystyle{plain}
\bibliography{ref_Tree}

\bigskip

\begingroup
\footnotesize 

Sicheng Lu
 
\textsc{School of Mathematical Sciences, Soochow University, Suzhou, Jiangsu, People's Republic of China.} 

\textit{Email address}: \texttt{\textcolor[rgb]{0.00,0.00,0.84}{sclu@suda.edu.cn}}

\bigskip
Yi Song (Corresponding author) 

\textsc{School of Mathematical Sciences, University of Science and Technology of China, Hefei, Anhui, People's Republic of China.} 

\textit{Email address}: \texttt{\textcolor[rgb]{0.00,0.00,0.84}{sif4delta0@mail.ustc.edu.cn}}

\endgroup

\bigskip

\begin{appendix}

\section{Computation examples}\label{sec:App}
Here we present some concrete examples of our bijection and some numerical results of Kochetkov's formula.

\subsection{A list of trees}\label{ssec:A1}

Considers a decomposable full passport $\ppf = (3\ 1_1\ 1_2\ \ol{4}\ \ol{1})$, following Notation \ref{not:passport}. 
We use the bijection $\PT_+(\ppf) \cong \TR(\ppf)$ in Proposition \ref{prop:pos_and_tree} to list all rooted trees, and therefore the set $\Tree(\ppf)$. 

There are 8 permutations in $\PT_+(\ppf)$, 4 of whom are listed in Figure \ref{fig:the_four_rooted_trees}.  
The corresponding rooted trees are drawn below the permutations, in a manner similar to Figure \ref{fig:folding}, indicating the reverse process. 
After forgetting the rooted edge, they all reduce to the same tree $T_1$ in $\Tree(\ppf)$. 
In particular, $G(\perm_i)$ is rooted at edge $e_i$ of $T_1$, for all $i=1,2,3,4$.

The remaining 4 permutations or rooted trees are obtained by swapping $1_1$ and $1_2$. The induced tree $T_2$ is the mirror copy of $T_1$. Then $\Tree(\ppf)=\{T_1, T_2\}$.

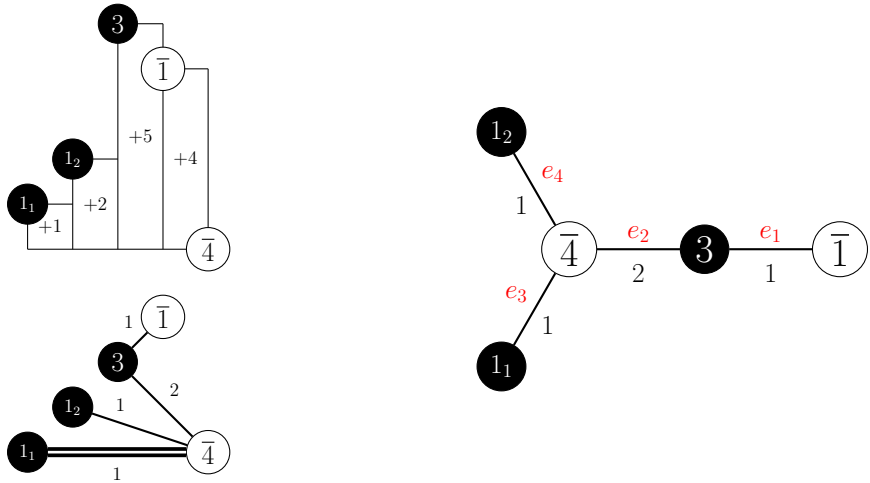
\begin{figure}[ht]
    \centering
\makebox[\textwidth]{
\begin{tikzpicture}[scale=0.6, transform shape]
    \node at (1.2 ,6.5) {\Large $\perm_1 = (3\ ,\ 1_1\ ,\ 1_2\ ,\ \ol{4}\ ,\ \ol{1})$};
    
    \draw (0,0) -- (4,0); 

    \draw (0,0) -- (0,3);
    \draw (1,0) -- (1,4); 
    \draw (2,0) -- (2,5);
    \draw (3,0) -- (3,5);
    \draw (4,0) -- (4,1);

    \draw (0,3) -- (1,3);
    \draw (1,4) -- (2,4);
    \draw (2,5) -- (3,5);
    \draw (3,1) -- (4,1);
    
    \node[blk] at (0,3) {\Large $3$}; 
    \node[blk] at (1,4) {       $1_1$};
    \node[blk] at (2,5) {       $1_2$};
    \node[wht] at (3,1) {\Large $\ol{4}$};
    \node[wht] at (4,0) {\Large $\ol{1}$}; 
    \node at (0.5,1.5) {+3};
    \node at (1.5,2) {+4};
    \node at (2.5,2.5) {+5};
    \node at (3.5,0.5) {+1};

    \begin{scope}[shift={(0,-4.5)}]
    \node[blk] (11) at (0,0) {\Large $3$}; 
    \node[blk] (12) at (1,2) {       $1_1$};
    \node[blk] (13) at (2,3) {       $1_2$};
    \node[wht] (14) at (3,1) {\Large $\ol{4}$};
    \node[wht] (15) at (4,0) {\Large $\ol{1}$}; 

    \draw[line width=1.4pt, double=white, double distance=1pt, black] (11) -- node[pos=0.5, below=5pt] {1} (15);
    \draw[thick] (11) -- node[pos=0.5, above=2pt] {2} (14);
    \draw[thick] (12) -- node[pos=0.5, above=5pt] {1} (14);
    \draw[thick] (13) -- node[pos=0.9, above=10pt] {1} (14);
    \end{scope}
    
    \begin{scope}[shift={(8,0)}]
    \node at (1.2 ,6.5) {\Large $\perm_2 = (3\ ,\ \ol{1}\ ,\ 1_1\ ,\ 1_2\ ,\ \ol{4})$};
    
    \draw (0,0) -- (4,0); 

    \draw (0,0) -- (0,3);
    \draw (1,0) -- (1,3); 
    \draw (2,0) -- (2,3);
    \draw (3,0) -- (3,4);
    \draw (4,0) -- (4,4);

    \draw (0,3) -- (1,3);
    \draw (1,2) -- (2,2);
    \draw (2,3) -- (3,3);
    \draw (3,4) -- (4,4);
    
    \node[blk] at (0,3) {\Large $3$}; 
    \node[wht] at (1,2) {\Large $\ol{1}$}; 
    \node[blk] at (2,3) {       $1_1$};
    \node[blk] at (3,4) {       $1_2$};
    \node[wht] at (4,0) {\Large $\ol{4}$};
    \node at (0.5,1.5) {+3};
    \node at (1.5,1) {+2};
    \node at (2.5,1.5) {+3};
    \node at (3.5,2) {+4};

    \begin{scope}[shift={(0,-4.5)}]
    \node[blk] (21) at (0,0) {\Large $3$}; 
    \node[wht] (22) at (1,2) {\Large $\ol{1}$}; 
    \node[blk] (23) at (2,1) {       $1_1$};
    \node[blk] (24) at (3,2) {       $1_2$};
    \node[wht] (25) at (4,0) {\Large $\ol{4}$};

    \draw[line width=1.4pt, double=white, double distance=1pt, black] (21) -- node[pos=0.5, below=5pt] {2} (25);
    \draw[thick] (21) -- node[pos=0.1, above=10pt] {1} (22);
    \draw[thick] (23) -- node[pos=0.5, above=5pt] {1} (25);
    \draw[thick] (24) -- node[pos=0.9, above=10pt] {1} (25);
    \end{scope}
    \end{scope}

    \begin{scope}[shift={(16, 0)}]
    \node at (1.2 ,6.5) {\Large $\perm_3 = (1_2\ ,\ 3\ ,\ \ol{1}\ ,\ 1_1\ ,\ \ol{4})$};
    
    \draw (0,0) -- (4,0); 

    \draw (0,0) -- (0,1);
    \draw (1,0) -- (1,4); 
    \draw (2,0) -- (2,4);
    \draw (3,0) -- (3,4);
    \draw (4,0) -- (4,4);

    \draw (0,1) -- (1,1);
    \draw (1,4) -- (2,4);
    \draw (2,3) -- (3,3);
    \draw (3,4) -- (4,4);
    
    \node[blk] at (0,1) {       $1_2$};
    \node[blk] at (1,4) {\Large $3$};
    \node[wht] at (2,3) {\Large $\ol{1}$};
    \node[blk] at (3,4) {       $1_1$};
    \node[wht] at (4,0) {\Large $\ol{4}$};
    \node at (0.5,0.5) {+1};
    \node at (1.5,2) {+4};
    \node at (2.5,1.5) {+3};
    \node at (3.5,2) {+4};

    \begin{scope}[shift={(0,-4.5)}]
    \node[blk] (31) at (0,0) {       $1_2$};
    \node[blk] (32) at (1,1) {\Large $3$};
    \node[wht] (33) at (2,2) {\Large $\ol{1}$};
    \node[blk] (34) at (3,3) {       $1_1$};
    \node[wht] (35) at (4,0) {\Large $\ol{4}$};

    \draw[line width=1.4pt, double=white, double distance=1pt, black] (31) -- node[pos=0.5, below=5pt] {1} (35);
    \draw[thick] (32) -- node[pos=0.5, above=5pt] {2} (35);
    \draw[thick] (32) -- node[pos=-0.2, above=10pt] {1} (33);
    \draw[thick] (34) -- node[pos=0.9, above=20pt] {1} (35);
    \end{scope}
    \end{scope}

    \begin{scope}[shift={(0, -14.5)}]
    \node at (1.2 ,6.5) {\Large $\perm_4 = (1_1\ ,\ 1_2\ ,\ 3\ ,\ \ol{1}\ ,\ \ol{4})$};
    
    \draw (0,0) -- (4,0); 

    \draw (0,0) -- (0,1);
    \draw (1,0) -- (1,2); 
    \draw (2,0) -- (2,5);
    \draw (3,0) -- (3,5);
    \draw (4,0) -- (4,4);

    \draw (0,1) -- (1,1);
    \draw (1,2) -- (2,2);
    \draw (2,5) -- (3,5);
    \draw (3,4) -- (4,4);
    
    \node[blk] at (0,1) {       $1_1$};
    \node[blk] at (1,2) {       $1_2$};
    \node[blk] at (2,5) {\Large $3$};
    \node[wht] at (3,4) {\Large $\ol{1}$};
    \node[wht] at (4,0) {\Large $\ol{4}$};
    \node at (0.5,0.5) {+1};
    \node at (1.5,1) {+2};
    \node at (2.5,2.5) {+5};
    \node at (3.5,2) {+4};

    \begin{scope}[shift={(0,-4.5)}]
    \node[blk] (41) at (0,0) {       $1_1$};
    \node[blk] (42) at (1,1) {       $1_2$};
    \node[blk] (43) at (2,2) {\Large $3$};
    \node[wht] (44) at (3,3) {\Large $\ol{1}$};
    \node[wht] (45) at (4,0) {\Large $\ol{4}$};

    \draw[line width=1.4pt, double=white, double distance=1pt, black] (41) -- node[pos=0.5, below=5pt] {1} (45);
    \draw[thick] (42) -- node[pos=0.3, above=3pt] {1} (45);
    \draw[thick] (43) -- node[pos=0.7, above=10pt] {2} (45);
    \draw[thick] (43) -- node[pos=-0.2, above=10pt] {1} (44);
    \end{scope}
    \end{scope}

    \begin{scope}[shift={(12, -14.5)}, ]

    \node[blk] (t1) at (-1.5,-2.6)  {\Large $1_1$};
    \node[blk] (t2) at (-1.5, 2.6)  {\Large $1_2$};
    \node[wht] (t3) at (0,0)        {\Huge  $\ol{4}$};
    \node[blk] (t4) at (3,0)        {\Huge  $3$};
    \node[wht] (t5) at (6,0)        {\Huge  $\ol{1}$};

    \draw[thick] (t1) -- 
    node[mid_auto, swap] {\Large 1} 
    node[color=red, mid_auto] {\Large$e_3$} (t3);
    \draw[thick] (t2) -- 
    node[mid_auto, swap] {\Large 1} 
    node[color=red, mid_auto] {\Large$e_4$} (t3);
    \draw[thick] (t3) -- 
    node[pos=0.5, below=5pt] {\Large 2} 
    node[color=red, mid_auto] {\Large$e_2$} (t4);
    \draw[thick] (t4) -- 
    node[pos=0.5, below=5pt] {\Large 1} 
    node[color=red, mid_auto] {\Large$e_1$} (t5);
    \end{scope}

\end{tikzpicture}
}
    \caption{The four rooted trees}
    \label{fig:the_four_rooted_trees}
\end{figure}

\newpage
\subsection{Values of Kochetkov's formula}\label{ssec:A2}

We compute the values of Kochetkov's formula for all passports with small total weights $n:= \normbig{\ppf} = 4,5,6,7$. 

Explanations for Table \ref{tab:n7}.  
To save space, we shall denote a full passport by its canonically corresponding trivial passport. 
The rows and columns represent the weights at black and white vertices, respectively. They are sorted in decreasing lexicographic order. 
For example, when $n=4$, the entry ``$2$'' in row 3, column 2 means $\Tree(\ppf)=2$ for $\ppf=(2_1\ 2_2\ \ol{3}_1\ \ol{1}_1)$. 

When exchanging the color of all vertices, the formula returns the same value. So each table is actually symmetric. Only the lower triangle parts are exhibited.

\begin{table}[htbp]
\footnotesize  
\rowcolors{1}{gray!20}{gray!20}
\setlength{\tabcolsep}{2.5pt}
\raggedright

\hspace*{1cm}%
\begin{minipage}[t]{\dimexpr\textwidth-1cm\relax}

\begin{minipage}[b]{0.35\textwidth}
    \raggedright
    \begin{tabular}{c|*{5}{c}}
    \hline
    $n=4$ & $4$ & $3\,1$ & $2^2$ & $2\,1^2$ & $1^4$ \\
    \hline
    $4$         & 1 & \multicolumn{4}{c}{} \\
    $3\,1$      & 1 & 1 & \multicolumn{3}{c}{} \\
    $2^2$       & 1 & 2 & 0 & \multicolumn{2}{c}{} \\
    $2\,1^2$    & 2 & 2 & 2 & 0 & \multicolumn{1}{c}{} \\
    $1^4$       & 6 & 0 & 0 & 0 & 0 \\
    \hline
    \end{tabular}
\end{minipage}
\hfill
\begin{minipage}[b]{0.6\textwidth}
    \raggedright
    \begin{tabular}{c|*{7}{c}}
    \hline
    $n=5$ & $5$ & $4\,1$ & $3\,2$ & $3\,1^2$ & $2^2\,1$ & $2\,1^3$ & $1^5$ \\
    \hline
    $5$         & 1 & \multicolumn{6}{c}{} \\

    $4\,1$      & 1 & 1 & \multicolumn{5}{c}{} \\

    $3\,2$      & 1 & 2 & 1 & \multicolumn{4}{c}{} \\

    $3\,1^2$    & 2 & 2 & 4 & 4 & \multicolumn{3}{c}{} \\

    $2^2\,1$    & 2 & 4 & 2 & 4 & 4 & \multicolumn{2}{c}{} \\

    $2\,1^3$    & 6 & 6 & 6 & 0 & 0 & 0 & \multicolumn{1}{c}{} \\

    $1^5$       & 24 & 0 & 0 & 0 & 0 & 0 & 0 \\
    \hline
    \end{tabular}
\end{minipage}

\vspace{15pt}

\begin{tabular}{c|*{11}{c}}
\hline
$n=6$ & $6$ & $5\,1$ & $4\,2$ & $4\,1^2$ & $3^2$ & $3\,2\,1$ & $3\,1^3$ & $2^3$ & $2^2\,1^2$ & $2\,1^4$ & $1^6$ \\
\hline
$6$         & 1 & \multicolumn{10}{c}{} \\

$5\,1$      & 1 & 1 & \multicolumn{9}{c}{} \\

$4\,2$      & 1 & 2 & 1 & \multicolumn{8}{c}{} \\

$4\,1^2$    & 2 & 2 & 4 & 4 & \multicolumn{7}{c}{} \\

$3^2$       & 1 & 2 & 2 & 6 & 0 & \multicolumn{6}{c}{} \\

$3\,2\,1$   & 2 & 4 & 4 & 8 & 2 & 7 & \multicolumn{5}{c}{} \\

$3\,1^3$    & 6 & 6 & 12 & 12 & 12 & 12 & 0 & \multicolumn{4}{c}{} \\

$2^3$       & 2 & 6 & 0 & 12 & 6 & 6 & 12 & 0 & \multicolumn{3}{c}{} \\

$2^2\,1^2$  & 6 & 12 & 8 & 12 & 8 & 12 & 0 & 12 & 0 & \multicolumn{2}{c}{} \\

$2\,1^4$    & 24 & 24 & 24 & 0 & 24 & 0 & 0 & 0 & 0 & 0 & \multicolumn{1}{c}{} \\

$1^6$       & 120 & 0 & 0 & 0 & 0 & 0 & 0 & 0 & 0 & 0 & 0 \\

\hline
\end{tabular}

\vspace{15pt}

\begin{tabular}{c|*{15}{c}}
\hline
$n=7$ & $7$ & $6\,1$ & $5\,2$ & $5\,1^2$ & $4\,3$ & $4\,2\,1$ & $4\,1^3$ & $3^2\,1$ & $3\,2^2$ & $3\,2\,1^2$ & $3\,1^4$ & $2^3\,1$ & $2^2\,1^3$ & $2\,1^5$ & $1^7$ \\
\hline
$7$         & 1 & \multicolumn{14}{c}{} \\

$6\,1$      & 1 & 1 & \multicolumn{13}{c}{} \\

$5\,2$      & 1 & 2 & 1 & \multicolumn{12}{c}{} \\

$5\,1^2$    & 2 & 2 & 4 & 4 & \multicolumn{11}{c}{} \\

$4\,3$      & 1 & 2 & 2 & 6 & 1 & \multicolumn{10}{c}{} \\

$4\,2\,1$   & 2 & 4 & 4 & 8 & 4 & 11 & \multicolumn{9}{c}{} \\

$4\,1^3$    & 6 & 6 & 12 & 12 & 18 & 24 & 36 & \multicolumn{8}{c}{} \\

$3^2\,1$    & 2 & 4 & 6 & 12 & 2 & 10 & 24 & 4 & \multicolumn{7}{c}{} \\

$3\,2^2$    & 2 & 6 & 2 & 16 & 4 & 8 & 36 & 12 & 4 & \multicolumn{6}{c}{} \\

$3\,2\,1^2$ & 6 & 12 & 14 & 24 & 10 & 24 & 36 & 24 & 24 & 36 & \multicolumn{5}{c}{} \\

$3\,1^4$    & 24 & 24 & 48 & 48 & 48 & 48 & 0 & 48 & 48 & 0 & 0 & \multicolumn{4}{c}{} \\

$2^3\,1$    & 6 & 18 & 6 & 36 & 12 & 24 & 36 & 24 & 12 & 36 & 0 & 36 & \multicolumn{3}{c}{} \\

$2^2\,1^3$  & 24 & 48 & 36 & 48 & 36 & 48 & 0 & 48 & 48 & 0 & 0 & 0 & 0 & \multicolumn{2}{c}{} \\

$2\,1^5$    & 120 & 120 & 120 & 0 & 120 & 0 & 0 & 0 & 0 & 0 & 0 & 0 & 0 & 0 & \multicolumn{1}{c}{} \\

$1^7$       & 720 & 0 & 0 & 0 & 0 & 0 & 0 & 0 & 0 & 0 & 0 & 0 & 0 & 0 & 0 \\
\hline
\end{tabular}

\end{minipage}

\caption{\small Values of Kochetkov's formula for total weight $4,5,6,7$.}
\label{tab:n7}
\end{table}

\newpage\subsection{Notation table} ~ \\

\begin{table}[h]
\makebox[1.06\textwidth][c]{
    \begin{minipage}[t]{0.65\textwidth}
    \begin{tabular}{l|l||}
        & \\ 
        $\RR^*$ & non-zero real numbers \\
        $\Zpos$ & positive integers \\
        \quad $[N]$ & \quad $\{1,2,\dots, N\} \subset \Zpos$ \\
        & \\
        $\pp = (S,\lambda,\wt)$ & passport (Def. \ref{def:passport}) \\ 
        \quad $S$ & \quad index set \\ 
        \quad $\lambda: S \to \Zpos$ & \quad multiplicity function \\
        \quad $\wt : S\to \Rnz$ & \quad weight function \\ 
        \quad $\absbig{\pp}$ & \quad number of vertices \\ 
        \quad $\Vert \pp \Vert$ & \quad total weight \\ 
        \quad $\ppf$ & \quad full $\sim$ \\
        \quad $\pp(S')$ & \quad sub-$\sim$ \\
        \quad $\widetilde{\pp}_F$ & \quad perturbed $\sim$ (Def. \ref{def:perturbed_passport}) \\
        & \\

        $(V, E, \wt_E)$ & WBP-tree (Sec. \ref{ssec:LWBP_tree}) \\ 
        \quad $V = V^+\sqcup V^-$ &\quad black and white vertices \\
        \quad $\wt_E : E \to \Rpos$ &\quad weight on edges \\
        \quad $\wt_V: V \to \Rnz$ &\quad weight on vertices \\
        \quad $\sigma_v:E(v) \to E(v)$ &\quad cyclic action (Def. \ref{def:cyclic_action}) \\
        & \\
        
        $T = (V, E, \wt_E,\Lab)$ & LWBP-tree (Def. \ref{def:labeled_tree}) \\ 
        \quad $\Lab: V \to S$ & \quad labeling of WBP-tree \\
        \quad $\Tree(\pp)$ & \quad trees of given passport \\
        \quad $\Fore(\pp)$ & \quad forests of given passport \\ 
        \quad $\FF M(\ppf)$ & \quad twice marked forests \\ 
        \quad $\TT M(\ppf)$ & \quad twice marked trees \\
        \quad $\TR (\ppf)$ & \quad rooted trees (Def. \ref{def:pos_perm})\\
        & \\ 
    \end{tabular}
    \end{minipage}
    \hfill
    \begin{minipage}[t]{0.65\textwidth}
    \begin{tabular}{l|l}
        $\ptt{p} = \{ \pp_i \}_{i=1}^{n}$ & partition (Def. \ref{def:partition}) \\
        \quad $|\ptt{p}|$ & \quad length of $\sim$ \\
        \quad $\ptt{e}$ & \quad trivial 1-$\sim$ \\
        \quad $\Ptt(\ppf)$ & \quad set of $\sim$ \\
        \quad $\Ptt_n(\ppf)$ & \quad set of $n$-$\sim$ \\ 
        \quad $m(\ppf)$ & \quad maximum $\sim$ length \\ 
        \quad $X(\ptt{p})$ & \quad product in Def. \ref{def:partition} \\ 
        \quad $\ptt{p}^o$ & \quad ordered $\sim$ (Def. \ref{def:ordd_partition}) \\
        \quad $\Ptt_n^o(\ppf)$ & \quad set of ordered $n$-$\sim$ \\ 
        & \\

        $\PP(\ppf)$ & permutations (Sec. \ref{ssec:perm_to_tree}) \\
        \quad $\perm = (s_i)_{i=1}^N$ &\quad a permutation \\
        \quad $H_i$ &\quad cumulative sum (Constr. \ref{cnstr:combing_step1}) \\
        \quad $\PP_+(\ppf)$ &\quad positive $\sim$ (Def. \ref{def:pos_perm}) \\
        \quad $\PT(\ppf)$ &\quad tree $\sim$ (Def. \ref{def:PT}) \\
        \quad $\PT_+(\ppf)$ &\quad positive tree $\sim$ (Def. \ref{def:pos_perm}) \\
        \quad $\PP_{\geq0}(\ppf)$ &\quad non-negative $\sim$ (Lem. \ref{lem:Pos_refined_passport_and_formula}) \\
        & \\

        $\comb$ & combing map (Def. \ref{def:comb}) \\ 
        \quad $Rec_i$ & \quad vertical rectangle \\ 
        \quad $e$ &\quad horizontal rectangle \\ 
        \quad $R(\perm)$ & \quad region of $\perm$ \\ 
        \quad $G(\perm)$ & \quad forest from $\perm$ \\ 
        & \\
        
        $\fold$ & folding map (Constr. \ref{cnstr:fold2}) \\
        \quad $\mathbf{e}$ & \quad horizontal rectangle \\ 
        \quad $\widehat{R}(T;a,b)$& \quad region from tree \\ 
        \quad $\perm(T;a,b)$ & \quad image of $\fold$ \\ 
    \end{tabular}
    \end{minipage}
}
\label{tab:notation}
\end{table}

\end{appendix}

\end{document}